\newcommand{\eg}{e.g.,\xspace}
\newcommand{\ie}{i.e.,\xspace}
\newcommand{\smiley}{$\xy*!<0pt,-1.5pt>{..}
 *\cir<5pt>{}
 *!<0pt,0.7pt>\cir<3pt>{d^u}
 \endxy$}
\newcommand{\bZ}{{\mathbb Z}}
\newcommand{\Q}{{\mathbb Q}}
\newcommand{\cC}{{\mathcal C}}
\newcommand{\cO}{{\mathcal O}}
\newcommand{\EE}{{\mathcal E}}
\newcommand{\ab}{{\ensuremath{\mathcal Ab}}}
\newcommand{\trivgroup}{0}
\newcommand{\p}{\widehat{{}_p}}
\newcommand{\holim}[1][{}]{\operatornamewithlimits{holim}_{\overleftarrow{#1}}}
\newcommand{\colim}[1][{}]{\operatornamewithlimits{lim}_{\overrightarrow{#1}}}
\newcommand{\hocolim}[1][{}]{\operatornamewithlimits{holim}_{\overrightarrow{#1}}}
\newcommand{\Ker}{\operatorname{Ker}}
\newcommand{\fib}{\twoheadrightarrow}
\newcommand{\smsh}{{\wedge}}
\newcommand{\sens}{\mathcal S_*}
\newcommand{\sen}{\mathcal S}
\newcommand{\gs}{\Gamma\!{\sens}}
\newcommand{\wefib}{\overset\sim\fib}
\newcommand{\ess}{\mathbf S}
\newcommand{\I}{\mathcal I}
\newcommand{\T}{\mathbb T} %the circle group, formerly {\mathbf T}
\theoremstyle{plain}
{
\theorembodyfont{\rmfamily}
\newtheorem{note}[subsubsection]{Note}
\newtheorem{Def}[subsubsection]{Definition}
\newtheorem{ex}[subsubsection]{Example}

}
\newtheorem{theo}[subsubsection]{Theorem}
\newtheorem{prop}[subsubsection]{Proposition}
\newtheorem{lemma}[subsubsection]{Lemma}
\newtheorem{cor}[subsubsection]{Corollary}
\newtheorem{remark}[subsubsection]{Remark}
\newenvironment{proof}{\par\noindent{\it Proof: }}{\qed\par}
\def\qedbox{$\Box$}
\newbox\QedBox \setbox\QedBox\vbox{\hrule height 1ex width .618ex}
\def\qedbox{\copy\QedBox}
\def\qed{%
   {%
      \unskip
      \nobreak \hfil
      \penalty 50               % TeX should break here if necessary
      \hskip 3em                % mimium amount of distance
      \null \nobreak \hfil
      \qedbox
      \parfillskip=\z@skip
      \finalhyphendemerits=\z@  % no line break at last line of paragraph
      \endgraf                  % don't know what \par is in this moment...
   }}
\newcommand{\Witt}{\mathbb{W}} 
\newcommand{\Group}{G}
\newcommand{\A}{\mathcal{A}}
\newcommand{\spaces}{\mathcal{S}_*}
\newcommand{\uspaces}{\mathcal{S}}
\newcommand{\M}{\mathcal{M}}
\DeclareMathOperator{\Aut}{Aut}
\newcommand{\Z}{\mathbb{Z}}  
\newcommand{\R}{\mathbb{R}}  
\newcommand{\Ar}{\mathcal{A}}
\newcommand{\Fin}{\mathrm{Fin}}
\newcommand{\id}{\mathrm{id}}
\newcommand{\Sl}{S^1}%NBNB _s removed
\newcommand{\xto}{\xrightarrow}
\newcommand{\xgets}{\xleftarrow}
\newcommand{\CC}{\mathcal C}
\newcommand{\BB}{\mathcal B}
\newcommand{\Rel}{Rel}
\newcommand{\Cat}{Cat}
\newcommand{\diag}{p}%{\text{Ind}}
\newcommand{\xycoprod}[1][{}]{\underset{#1}{\coprod}}
\newcommand{\TZ}{\T^{\times n}}
\newcommand{\lont}{transformation\xspace}
\newcommand{\THH}{THH}
\newcommand{\HH}[1][{}]{HH^{#1}}
\newcommand{\TS}[1][{}]{\Lambda_{#1}}%toroidal but nonequivariantly
\newcommand{\dT}[1][{}]{\Lambda_{#1}}%discrete action but equivariantly 
\newcommand{\ddT}{\Lambda}%discrete action but equivariantly 
\newcommand{\Kd}{\widetilde}%rectification on \Delta^o
\begin{document}{
\author{
Morten Brun
\and
Gunnar Carlsson
 \thanks{The second author was supported in part
   by NSF DMS 0406992.  Part of this work was done while the second author visited
   University of Bergen, and we want to thank the institution for its hospitality}%\\
% % \address{Stanford  University}%\email{gunnar@math.stanford.edu}
 \and 
 Bj{\o}rn Ian Dundas
\thanks{Part of this work was done while the third author visited
   Stanford university, and we want to thank the institution for its
 hospitality. Part of this work was done while the second and third authors visited Institut
   Mittag-Leffler for the program in algebraic topology in the spring
   of 2006, and we want to thank the organizers for the
   invitation and the possibility to work there.}%\\
% %\address{University of Bergen}%\email{dundas@math.uib.no}
} 
\title{Covering homology
%Higher topological cyclic homology
}

\maketitle%\largeMorten Brun <mbr085@mi.uib.no> writes:
%Mangler: $\dT[X]A$ er %et ringspektrum og 
%en homotopifunktor i $A$.
\abstract{
We introduce the notion of {\em covering homology} of a commutative
$\ess$-algebra with respect to certain families of coverings of
topological spaces. The construction of covering homology is extracted
from B\"okstedt, Hsiang and Madsen's topological cyclic homology. In
fact covering homology with respect to the family of orientation
preserving isogenies of the circle is equal to topological cyclic
homology.
Our basic
   tool for the analysis of covering homology is a cofibration
   sequence involving homotopy orbits and a restriction map similar to
   the restriction map used in B\"okstedt, Hsiang and Madsen's
   construction of topological cyclic homology.

Covering homology with respect to families of isogenies of a torus is
 constructed from iterated topological
 Hochschild homology. It receives a trace map from iterated algebraic
 K-theory and the hope is that the rich structure, and the calculability
 of covering homology will make covering homology useful in the exploration
 of J. Rognes' ``red shift conjecture''.} 
\section{Introduction}
 Topological cyclic homology ($TC$), as  defined by B\"okstedt, Hsiang
 and Madsen in \cite{BHM}, is interesting for two reasons: firstly it is a
 good approximation to algebraic K-theory, secondly it is accessible through
 methods in stable homotopy theory.  Along with motivic
 homotopy theory, topological cyclic homology is the main source for
 calculations of algebraic K-theory.

 Topological cyclic homology is built from a diagram of categorical fixed point
 spectra of B\"okstedt's topological Hochschild homology.
 The main reason for the accessibility of $TC$ is the so-called
 ``fundamental cofiber sequence'' which inductively gives 
 homotopical control of the categorical fixed points.  There are many
 frameworks where people find conceptual reasons for the fundamental
 cofiber sequence -- for instance it can be viewed as a concrete
 identification of the geometrical fixed points -- but regardless of point of view it remains a
 marvellous fact at a crucial point of the theory.

 Just as for other cyclic nerve constructions, if the input is
 commutative -- in our case a connective commutative $\ess$-algebra $A$ --
 topological Hochschild homology extends to a functor of spaces $X\mapsto
 \dT[X]A$, where the value at the circle $S^1$ recovers the usual
 definition $\dT[{S^1}]A\simeq THH(A)$. 
%Since Loday
% has given a detailed study of the linear version of this functor
% \cite{MR981743} we
% call it the Loday functor. 
If $X$ is a finite set,
 $\dT[X]A$ is just a particular model for the $X$-fold smash product
 of $A$ with itself. Our preferred model $\dT[X] A$ 
%of the $X$-fold smash product 
is extracted from B\"okstedt's construction of topological
 Hochschild homology 
%NBNB\cite{} Removed since the paper never appeared
 %and Thomason's use \cite{MR81c:18018} %of 
and Street's first
 construction 
 \cite{MR0347936} to enhance the functoriality of homotopy colimits.
This functoriality has the side effect that the multiplicative
structure of topological Hochschild homology can be realized on our
concrete model, and using B\"okstedt's construction as our basis, we
get that $\dT[X]A$ is automatically a homotopy functor in both $X$ and
$A$ withouth any cofibrant replacements.

The main reason for our choice of model is that the
 fundamental cofiber sequence extends in a beautiful manner (see Lemma
 \ref{lem:fundcofseq1}) giving full homotopy theoretic control over
 the categorical fixed points. Currently
 it is not clear how to extend it working with other models for the smash
 product of commutative ringspectra. The sequence becomes particularly
 transparent in the abelian case, which is the interesting part if one
 is mostly concerned with the case $X$ being a torus (which is the
 case for iterated topological Hochschild homology): if $G$ is a finite abelian group,
 $X$ a non-empty free $G$-space and $A$ a connective commutative
 $\ess$-algebra, then (\ref{lem:funcofseq}) there is a cofiber sequence
  \begin{displaymath}
    [\dT[X](A)]_{hG}\to[\dT[X](A)]^G \to \holim[0 \ne H \le G] [\dT[{X/H}](A)]^{G/H},
  \end{displaymath}
where $[\dT[X](A)]_{hG}$ denotes the homotopy $G$-orbits,
$[\dT[X](A)]^G$ the categorical fixed points and the homotopy limit is
taken over all nontrivial subgroups of $G$.  In the equivariant world,
this could be viewed as an instance of the tom Dieck filtration gotten
by taking fixed points of the sequence one gets by smashing
$\dT[X](A)$ with the
cofibration sequence $EG_+\to S^0\to\widetilde{EG}$, as
discussed in \cite{carlssontopology}, together with identifications,
firstly of the
geometric $H$-fixed-points of $\dT[X](A)$ and  $\dT[{X/H}](A)$, and
secondly of the categorical fixed points  of $\dT[X](A)$ and the fixed points obtained by
deloopings by representations in a universe.  However, we also get
that these deloopings are not necessary for developing the
theory (with the exception of matters related to transfers, which
will be important in a later paper), and we can stay with the concrete functorial model at hand and
its associated categorical constructions.

 More precisely, by induction on the order of the group, the fundamental cofibration
 sequence imply that we have full homotopical control over the
 categorical fixed points $(\Lambda_XA)^G$ 
if $G$ is a finite group
 acting freely on $X$. 
 
As a result of this structure we get that if $X$ is connected, then
(\ref{prop:pi0offix}) there is a
natural isomorphism 
$$\pi_0 [\dT[X]A]^G\cong \Witt_G(\pi_0 A)$$ where the right hand side
is the Burnside-Witt ring of Dress and Siebeneicher \cite{DressSieb};
and we recover
Hesselholt and Madsen's result $\pi_0 [THH(A)]^{C_r} \cong
\Witt_{C_r}(\pi_0 A)$ \cite{HM1} where $C_r$ is the cyclic group of
order $r$.

 Studying systems of coverings, the spectra $(\dT[X]A)^G$ assemble into a
 diagram giving rise to the new notion of ``covering homology''.  In
 the particular case of finite orientation preserving self-coverings of
 the circle this is B\"okstedt, Hsiang
 and Madsen's topological cyclic homology.  
If we include reflections we get a definition of topological dihedral homology.

 In the special case where $X$ is the $n$-torus $\TZ$, the spectrum
 $\dT[\TZ]A$ is a model for the $n$-fold iterated topological
 Hochschild homology of $A$.  This said, the covering homology is very
 different from iterated topological cyclic homology, having a vastly richer
 structure.  We give examples at the very end of the paper where we
 see actions of various Galois groups, units in orders in division
 algebras (and so Morava
 stabilizer groups) and in the
 extreme case, all of $GL_n(\bZ)$.  The study of this structure and concrete calculations
 will be followed up in a second paper.  It is from this detailed
 analysis one should hope to glean insight into the chromatic
 behaviour of covering homology.

To give the reader an idea about the structure entering into the construction of covering homology,
consider topological Hochschild homology ($THH$) of the spherical group ring
${\ess}[G]$ of an abelian group $G$.  It turns out 
to be the suspension spectrum $\ess[Map(\T,B G)]$
of the free loop space of the classifying space of
the group in question, i.e. the space of unbased continuous maps of
the circle into $B G$.  The operators used to compute $TC$ arise from
the evident circle action on the free loop space, as well as from the
power maps of various degrees from the circle to itself.  This free
loop space interpretation of topological Hochschild homology  shows
that the $n$-fold iteration of $THH$ on $\ess[G]$ is equivalent to the
suspension spectrum 
$$\dT[\TZ](\ess[G])\simeq\ess[Map(\TZ,B^n G)]$$ on the 
unbased mapping space of a higher dimensional torus into the iterated
bar construction $B^nG$.  This
space supports many natural operations beyond the
one-variable ones.  For example, the group $GL_n ({\bZ})$ acts on the
$n$-torus, and hence on the mapping space from the torus into $B^nG$.
In addition, generalizations of the power maps include all
possible isogenies of the torus to itself.  This will be true in any
sufficiently functorial model for $THH$; the important point is that
the equivariant structure is ``right'' - a thing secured by the
fundamental cofiber sequence.

%These would be expected to
%have some relationship with the number-theoretic Hecke operators.   

This is a brief overview of the paper. In sections 2--4 we construct
the Loday functor $X
\mapsto \Lambda_X A$. Section 2 is a guide to the construction
with references to related constructions. Section 3 contains 
combinatorial preliminaries, and in Section 4 we finally define 
$\Lambda_XA$.
%the Loday functor. %Along the way we present a new approach to Adams
%operations on topological Hochschild homology.
In Section 5 we present the fundamental cofibration
sequence. In Section 6 this cofibration sequence is used to describe
the zeroth homotopy group of fixed points of $\Lambda_XA$ %the Loday
                                %functor 
in
terms of the Burnside--Witt construction. Finally in the short Section
7 we define covering homology and show how it extends the definition
of topological cyclic homology.
\setcounter{tocdepth}{2}
%\addcontentsline{toc}{section}{References}
%\tableofcontents 

\setcounter{subsection}{0}\setcounter{subsubsection}{0}
\section{Guide to the construction}

Let $X$ be a space.
In the sections to follow we give a model, $\dT[X]A$, for the $X$-fold smash-power
of a connective commutative $\ess$-algebra $A$ (\ie a symmetric monoid in
$(\gs,\smsh,\ess)$). We have chosen to
work with $\Gamma$-spaces, but our constructions 
work equally well on connective commutative symmetric ring-spectra.
We shall call $\dT[X]A$ the {\em Loday functor} of
$A$ evaluated at $X$. It is
important for the
construction that $A$ is {\it strictly} commutative.
The model $\dT[X]A$ is functorial in both 
$X$ and $A$. Therefore, if a group $G$ acts on $X$, then we can
consider the (categorically honest) $G$-fixed points of $\dT[X]A$. In
the particular situation 
where $G$ is finite and the action on $X$ is free 
we have good control on the fixed point spectrum $(\dT[X]A)^G$.
When $X$ is the underlying space of the simplicial circle group $\T =
\sin U(1)$, given by the singular complex on the
circle group $U(1) = \{x \in \mathbb C \colon |x|=1\}$, the Loday functor
evaluated at $X$ is a model 
for topological Hochschild homology.

Notation: $\Delta$ is the category of finite non-empty ordered sets,
$\Delta^o$ its opposite, $Fin$ is the category of finite sets,
$\Gamma^o$ is the category of finite pointed spaces and $\ab$ is the
category of abelian groups.

\subsection{Higher Hochschild homology}
\label{sec:itreahoch}
\label{noneqivTHH}
As a motivation, consider ordinary Hochschild homology
$HH(A)\colon\Delta^o\to\ab$ of a flat ring $A$, given in each
dimension by 
$$HH_q(A)=A^{\otimes q+1}.$$
If $A$ is commutative, then $\HH(A)$ is a simplicial commutative ring,
and Loday \cite{MR981743} %\cite{Loday} ((This is not the right reference, is it?)) 
observed that Hochschild homology factors
through the category $\Fin$ of finite sets: 
$$
\xymatrix{\Delta^o\ar[rr]^{HH(A)}\ar[d]^{\Sl}&&{\ab}\\{\text{Fin}}\ar@{.>}[urr]}.
$$
(see \eg \cite{Loday}). 
Here $\Sl = \Delta[1]/\partial \Delta[1]$ is the standard simplicial circle.
The diagonal functor is oftentimes called the {\em Loday complex}.  Let us write
simply 
$$X\mapsto \dT[X]^{\Z} A$$ for the diagonal functor. 
This is  functorial in
the finite set $X$, and so, if we extend to all sets by colimits and
to all simplicial sets by applying the functor degreewise, we get a
functor $X\mapsto \dT[X]^{\Z} A$ from the category $\sen$ of spaces
(simplicial sets) to simplicial abelian groups, with classical Hochschild
homology being  $$\HH(A) = \dT[{\Sl}]^{\Z} A.$$
%where $S^1 = \Delta[1] /\partial \Delta[1]$ is the simplicial circle.
Pirashvili
\cite{MR1755114} uses
the notation $H^{[d]}(A,A)$ for the Locay complex for $A$ evaluated on
the $d$-dimensional sphere 
%$\dT[S^d]^{\Z}A$ 
and calls it
  ``higher Hochschild homology of order $d$''. 

There is no obstruction to apply the same construction to symmetric monoids in any symmetric monoidal category, Hochschild homology being the case when one considers $(\ab, \otimes,\bZ)$.

\subsection{Higher topological Hochschild homology}
\label{sec:iterthh}

Consider any of the popular symmetric monoidal categories of spectra.
Then topological Hochschild homology of a cofibrant $\ess$-algebra $A$
is equivalent to the simplicial spectrum gotten by just replacing
$\otimes$ with $\smsh$ in the definition of Hochschild homology, and
in the commutative case we have a factorization $X \mapsto A \otimes
X$ through $Fin$, where  $\otimes$ is
  the categorical tensor in commutative $\ess$-algebras. Extending
  this functor to the category of spaces just as in the above
  subsection we obtain for every commuative $\ess$-algebra $A$ and
  every space $X$ the {\em higher topological Hochschild homology} $A
  \otimes X$. The above 
is another way of stating the result of McClure, Schw{\"a}nzl and Vogt 
  \cite{McClureTHH}: $THH(A)\simeq A\otimes \Sl$. Here $THH$ is
  B\"okstedt's model for topological Hochschild homology. The above
  equivalence is
  an equivalence of cyclic spectra. However 
  in the context of
  $\ess$-algebras in the sense of Elmendorf, Mandell, Kriz and May
  \cite{EKMM} the fixed point spectrum $(A\otimes sd_r \Sl)^{G}$ of
  $A\otimes sd_r \Sl$
  with respect to the cyclic group $G$ with $r > 1$ elements does not have
  the same homotopy type as the $G$-fixed point 
  spectrum of $sd_r \THH(A)$. 
% B\"okstedt 
%  and $\T = \operatorname{Sin}|\Sl|$ is
%   the singular complex of the circle group). 
% Note that 
% the categorical tensor with $A$ is a homotopy functor, and
% therefore $A \otimes \T \simeq A \otimes \Sl$ is a model for $THH(A)$.
In the language of McClure et al. it is easy to see what the iterated topological Hochschild homology is:
$$THH(THH(A))\simeq (A\otimes \Sl)\otimes \Sl\simeq A\otimes(\Sl\times \Sl).$$
Hence taking the $n$th iterate of $THH$ is the same as tensoring with the $n$-torus.

% Unfortunately, this model is not well suited for constructing topological cyclic homology.  To do this we will follow an approach closer to B\"okstedt's original.

%((Describe Loday's insight, amounting essentially to the fact that the cyclic nerve factors through $Fin$ in the commutative case))
%\section{The Burnside-Witt construction}

% Throughout this paper fixed points of a $G$-object are taken in the simplest
% possible sense, that is, they are defined to be the limit of the $G$-object
% considered as functor on the category associated to $G$. 
 
% In the following sections we give models for iterated topological
% Hochschild homology of commutative $\ess$-algebras.  The reader will
% naturally wonder why we introduce so many variants, and not just one
% ``right'' one.

% The models will come in different flavors serving different purposes.  
% Our ultimate goal is to describe a multi-dimensional version of
% topological cyclic homology, and for that purpose our last model for
% topological Hochschild homology of a commutative $\ess$-algebra $A$,
% called $\Tn(A)$, is the appropriate one.  
% This model comes with an action of the $p$-completed $n$-torus $\Tnp$,
% with complete control over categorical fixed points $\Tn(A)^H$ under the action
% of a finite subgroup $H$ of the torus (the ``categorical fixed points'' of
% a spectrum $X$ with $G$-action is the spectrum $X^G$ whose $k$-th term
% is the space $(X_k)^G$ of fixed points of $X_k$).
  
In the situation where $X = \T$ is the circle group and $G$ is a
finite subgroup of $\T$ there is a homotopy
equivalence between the Loday functor $\ddT_{\T}A$ of $A$ evaluated at
$\T$ and $THH(A)$, and this equivalence is $G$-equivariant in the
sense that it induces an equivalence of $H$-fixed spectra for every
subgroup $H$ of $G$. In
this situation the 
control on the fixed point space derives from the
``fundamental cofiber sequence'' 
$$THH(A)_{hC_{p^n}}\to THH(A)^{C_{p^n}}\to THH(A)^{C_{p^{n-1}}}.$$

In Lemma \ref{lem:funcofseq} we generalize the fundamental cofibration
sequence  
to the situation where a finite group $G$ acts freely
on a space $X$.  
In the toroidal case $X = \TZ$, the spectrum $\ddT_X A$
is homotopy equivalent to iterated topological Hochschild homology, but
the fact that the 
complexity of the
subgroup lattice of torus increases with dimension
makes the fundamental cofibration sequence more involved.  
This also gives rich and interesting symmetries
on the collection of fixed point spectra $\ddT_{\TZ}(A)^H$ under varying finite
$H\subseteq \TZ$.  This structure gives actions by interesting groups
possibly shedding light 
on the chromatic properties of the spectra, as in Rognes' red shift
conjecture. % \cite{}.

Although iterated $TC$
involves iterations of fixed point spectra of $THH$, we consider
the much simpler idea of taking the fixed point under the toroidal
action on the iterated $THH$.  This gives us in many ways a much
cruder invariant, but also a much more computable one -- essentially,
the difference is that of $\left((A\smsh A)^{C_2}\smsh (A\smsh
A)^{C_2}\right)^{C_2}$ (the start of the iterated construction) and $\left((A\smsh A)\smsh(A\smsh
A)\right)^{C_2\times C_2}$ (the start of our construction).  Taken $A
= \ess$ we see that these constructions give different results. Furthermore,
the resulting theory displays a vastly richer symmetry, giving rise to
a plethora of actions not visible if one only focuses on ``diagonal''
actions.

% The Loday functor $X \mapsto \ddT_X A$ is a modification of
% higher topological Hochschild homology $A \otimes X$. 
The underlying
spectra of  $X \mapsto \ddT_X A$ and $A \otimes X$ are equivalent.
The problem with the model $A \otimes X$ is that if a finite group $G$ acts on
$X$ then we do not fully understand the $G$-fixed points
of this model.
Presumably this could be fixed along the lines of Brun and Lydakis'
version of $THH$ (unpublished) or Tore Kro's thesis \cite{kro}.
However, for now we choose a
hands-on approach.

%This structure will be discussed more fully ...

% We mostly work in the $p$-complete
% situation for a fixed prime $p$.

\setcounter{subsection}{0}\setcounter{subsubsection}{0}

\setcounter{subsection}{0}\setcounter{subsubsection}{0}

\section{Encoding coherence with spans}
\label{sec:encodespans}

In this section we shall use the category $V$ of spans of finite sets
described below,
to encode the coherence data for symmetric monoidal categories. More
precisely, we shall encode the coherence data by a lax functor
from $V$ to the category of categories. Since we are interested in
group actions we will also investigate group actions on the morphism
sets of $V$. The reader who is willing to believe in our coherence results
may skip this quite technical section. However these coherence
results are essential for the construction of the Loday functor
$\dT$ in the next section.
 
\subsection{The category of spans}
\label{sec:eqspans}

The
object class of the category $V$ of spans of finite set is the class
of finite sets.
Given finite sets 
$X$ and $Y$ the set of morphisms $V(Y,X)$ is the set of equivalence classes $[Y
\gets A \to X]$ of
diagrams of finite sets of the form $Y \gets A \to X$. Here $Y \gets A \to X$ is
equivalent to $Y \gets A' \to X$ if there exists a bijection $A \to
A'$ making the resulting triangles commute. The composition of two
morphisms $[Z \gets B \to Y]$ and $[Y \gets A \to X]$ is the morphism
$[Z \gets C \to X]$, where $C$ is the pull-back of the diagram $B \to
Y \gets A$. 

If $\Group$ is a group acting on a finite set $X$, then $\Group$
acts on $X$ considered as an object of $V$ through the functor $g
\mapsto g_*$. Consequently, $\Group$ acts on the morphism sets
$V(Y,X)$. There is a function 
\begin{displaymath}
  \varphi \colon V(Y,X/\Group) \to V(Y,X)^{\Group}
% , \quad [Y \gets A \to X/\Group] \mapsto
%   [Y \gets A' \to X],
\end{displaymath}
defined by the formula
\begin{displaymath}
 \varphi([Y \gets A \to X/\Group]) = [Y \gets A \times_{X/\Group} X  \to X].
\end{displaymath}

\begin{prop}
  The function $\varphi$ is bijective.
\end{prop}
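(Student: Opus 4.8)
The plan is to strip the statement down to the elementary combinatorics of finite spans. Since $Y$ and $Z$ are finite, a morphism $[Y\gets B\to Z]$ of $V(Y,Z)$ is the same thing as an isomorphism class of finite sets over $Y\times Z$, so the assignment
$$\mu_B\colon Y\times Z\longrightarrow\mathbf{N}_{0},\qquad \mu_B(y,z)=\#\{\,b\in B:\ b\mapsto y,\ b\mapsto z\,\}$$
sets up a bijection $\mu\colon V(Y,Z)\xrightarrow{\ \cong\ }\mathbf{N}_{0}^{\,Y\times Z}$. (Composition in $V$ corresponds to the evident convolution of multiplicity functions, but I will only need this to read off the action below.) I would first record this, and then translate the three ingredients of the statement into multiplicity functions.

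For the automorphism $g_{*}\in V(X,X)$ represented by $[X\xleftarrow{=}X\xrightarrow{g}X]$ the multiplicity function is the permutation matrix $(x,x')\mapsto\delta_{x',gx}$, and left composition with it carries a morphism with multiplicity function $m\colon Y\times X\to\mathbf{N}_{0}$ to the one with function $(y,x)\mapsto m(y,g^{-1}x)$. Hence under $\mu$ the $G$-action on $V(Y,X)$ is $(g\cdot m)(y,x)=m(y,g^{-1}x)$, so that $\mu$ identifies $V(Y,X)^{G}$ with the set of functions $m\colon Y\times X\to\mathbf{N}_{0}$ satisfying $m(y,gx)=m(y,x)$ for all $g\in G$, i.e. those that are $G$-invariant in the second variable. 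On the other hand, for $\xi=[Y\gets A\to X/G]$ with structure maps $f\colon A\to Y$ and $q\colon A\to X/G$, the pullback $A\times_{X/G}X$ has fiber $\{a\in A:\ f(a)=y,\ q(a)=\pi(x)\}$ over $(y,x)\in Y\times X$, where $\pi\colon X\to X/G$ is the quotient map; hence $\mu_{\varphi(\xi)}(y,x)=\mu_{A}(y,\pi(x))$. In other words, under $\mu$ the map $\varphi$ becomes precomposition with the surjection $\id_{Y}\times\pi\colon Y\times X\to Y\times(X/G)$, namely $n\mapsto n\circ(\id_{Y}\times\pi)$. This already re-derives that $\varphi$ is well defined and lands in the fixed set.

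Given this dictionary the proposition is essentially formal. Precomposition with the surjection $\id_{Y}\times\pi$ is injective, and its image is exactly the set of functions on $Y\times X$ that are constant on the fibers of $\id_{Y}\times\pi$; since the fibers of $\pi$ are precisely the $G$-orbits of $X$, that image is exactly the set of functions that are $G$-invariant in the second variable, i.e. $\mu(V(Y,X)^{G})$. Therefore $\varphi$ is a bijection onto $V(Y,X)^{G}$.

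I do not expect a genuine obstacle, but there is one point worth getting right. It is tempting to prove surjectivity directly by choosing a span $[Y\gets B\to X]$ representing a $G$-fixed class, equipping $B$ with a $G$-action, and setting $A=B/G$; this does not work, because fixedness of the isomorphism class only supplies, for each $g\in G$, \emph{some} isomorphism $\sigma_{g}\colon B\to B$ witnessing $g\cdot[B]=[B]$, with no coherence among the $\sigma_{g}$, so no quotient is available. Passing to multiplicity functions sidesteps this entirely, since there the fixed set is visibly the set of $G$-invariant functions and the preimage under $\varphi$ is produced by hand.
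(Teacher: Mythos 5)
Your proof is correct and follows essentially the same route as the paper. The paper decomposes $V(Y,X)\cong\Fin(X,V(Y,*))$ using the categorical product in $V$ and then uses $\Fin(X/G,-)\cong\Fin(X,-)^{G}$; since $V(Y,*)$ is exactly the set of multiplicity functions on $Y$, this is the same as your identification $V(Y,X)\cong\mathbf{N}_{0}^{\,Y\times X}$ together with the observation that $\varphi$ is precomposition with $\id_{Y}\times\pi$ --- you simply spell out the verification the paper labels ``not hard to check.''
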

\begin{proof}
  The heart of the proof of this result is that the category $V$
has a categorical product given by the disjoint union of finite
sets. This gives rise to a $\Group$-equivariant bijection
\begin{displaymath}
  V(Y,X) \cong \prod_{x \in X} V(Y,\{x\}) \cong \Fin(X,V(Y,*)), 
\end{displaymath}
where $\Group$ acts trivially on $V(Y,*)$ and on $\Fin(X,V(Y,*))$ by
conjugation. It is not hard to check that the composition
\begin{displaymath}
  V(Y,X/\Group) \cong \Fin(X/\Group,V(Y,*)) \cong
  \Fin(X,V(Y,*))^{\Group} \cong V(Y,X)^{\Group}
\end{displaymath}
is the map $\varphi$. 
\end{proof}

\begin{cor}
\label{fixfix}
  There is an isomorphism of categories $\psi \colon V/(X/\Group) \to (V/X)^{\Group}$.
\end{cor}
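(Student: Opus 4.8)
The plan is to realize $\psi$ as a postcomposition functor and to deduce everything formally from the proposition just proved. Write $\pi \in V(X/\Group,X)$ for the span $[X/\Group \xleftarrow{q} X \xrightarrow{\id} X]$, where $q$ is the quotient map. Unwinding the composition rule in $V$, for every $[Y \gets A \to X/\Group]$ the composite $\pi \circ [Y \gets A \to X/\Group]$ is $[Y \gets A\times_{X/\Group} X \to X]$, which is exactly $\varphi([Y \gets A \to X/\Group])$. So the postcomposition functor $\pi_!\colon V/(X/\Group) \to V/X$ — which sends an object $f\colon Y \to X/\Group$ to $\pi\circ f$ and does nothing to the underlying span of a morphism — agrees with $\varphi$ on objects. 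The point of this reformulation is that functoriality of $\varphi$ along the slice is then automatic: I do not have to verify a pullback-pasting identity by hand, since postcomposition with a fixed morphism is always a functor between slice categories.

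Next I would pin down the target. The group $\Group$ acts on $V/X$ through $g\mapsto g_*$, i.e. by postcomposition with the isomorphisms $g_*\colon X\to X$; this action is the identity on the underlying spans of morphisms and changes only the structure map. Hence a $\Group$-fixed object is precisely a morphism $f\colon Y\to X$ with $g_* f = f$ for all $g$, i.e. an element of $V(Y,X)^{\Group}$, and for any two such fixed objects every morphism between them in $V/X$ is already $\Group$-fixed. Thus $(V/X)^{\Group}$ is the \emph{full} subcategory of $V/X$ spanned by the objects lying in the sets $V(Y,X)^{\Group}$. Since the proposition says $\varphi$ lands in these sets, $\pi_!$ corestricts to a functor $\psi\colon V/(X/\Group)\to (V/X)^{\Group}$, and fullness of $(V/X)^{\Group}$ guarantees that $\psi$ is exactly ``apply $\pi_!$'' with no further choices on morphisms.

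It then remains to check $\psi$ is an isomorphism of categories, i.e. a bijection on objects and on each hom-set. On objects this is immediate: over each finite set $Y$ the object map of $\psi$ is $\varphi_Y\colon V(Y,X/\Group)\to V(Y,X)^{\Group}$, which is bijective by the proposition. For hom-sets, fix $f\in V(Y,X/\Group)$ and $f'\in V(Y',X/\Group)$; a morphism $f\to f'$ in $V/(X/\Group)$ is an $h\in V(Y,Y')$ with $f'\circ h = f$, a morphism $\psi f\to\psi f'$ is an $h\in V(Y,Y')$ with $\varphi(f')\circ h = \varphi(f)$, and $\psi$ sends the former to the latter fixing $h$. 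Using $\varphi(f')\circ h = \pi\circ f'\circ h = \varphi(f'\circ h)$, the condition $\varphi(f')\circ h=\varphi(f)$ reads $\varphi(f'\circ h)=\varphi(f)$, which by injectivity of $\varphi_Y$ is equivalent to $f'\circ h = f$. Hence $\psi$ is bijective on each hom-set, and the corollary follows.

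The one place I expect to have to be careful is the bookkeeping around the $\Group$-action on the slice: one must check that the induced action on $V/X$ alters only the structure map, so that $(V/X)^{\Group}$ is genuinely a full subcategory and $\psi$ is forced on morphisms — once that is in place the statement is a purely formal consequence of the proposition. A secondary point to get right is the variance of the canonical span $\pi$ and the verification that postcomposition with $\pi$ reproduces the pullback formula defining $\varphi$.
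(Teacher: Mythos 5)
Your proof is correct, and it is essentially the paper's argument repackaged more cleanly. The paper defines $\psi$ on objects via $\varphi$, verifies the commutative square
$\varphi(f\circ h)=\varphi(f)\circ h$ for $h\in V(Z,Y)$, $f\in V(Y,X/\Group)$, and then lets $\psi$ be the identity on underlying morphisms; your observation that $\varphi = \pi\circ(-)$ for $\pi = [\,X/\Group \xleftarrow{q} X \xrightarrow{\id} X\,]$ is exactly what makes that square commute (it becomes associativity of composition), so you get the functoriality of $\psi$ formally from the slice-category functor $\pi_!$ rather than by a direct check. Two things you supply that the paper elides, and which are worth having: the careful identification of $(V/X)^{\Group}$ as the \emph{full} subcategory of $V/X$ on $\Group$-fixed structure maps (because the action fixes the underlying morphism of $V$), and the verification that $\psi$ is bijective on hom-sets — where you correctly reduce $\varphi(f')\circ h=\varphi(f)$ to $f'\circ h=f$ by writing $\varphi(f')\circ h = \varphi(f'\circ h)$ and invoking injectivity of $\varphi$. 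Both routes rest on the same proposition; yours just names the canonical span $\pi$ and lets the formal properties of slice categories do the bookkeeping.
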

\begin{proof}
  The functor $\psi$ takes an object $\alpha \in V(Y,X/\Group)$ to
  $\varphi(\alpha) \in V(Y,X)^G$. By the definition of $\varphi$ it is
  clear that it is functorial in the sense that the diagram
  \begin{displaymath}
    \begin{CD}
      V(Z,Y) \times V(Y,X/\Group) @>>> V(Z,X/\Group) \\
      @VV{\id \times \varphi}V @VV{\varphi}V \\
      V(Z,Y) \times V(Y,X)^{\Group} @>>> V(Z,X)^{\Group}
    \end{CD}
  \end{displaymath}
  commutes for all finite sets $Y$ and $Z$. Therefore, 
  on morphisms, we can define the functor $\psi$ to be
  given by the identity.
\end{proof}

To get the multiplicative structure on our construction we will need
the following lemma
\begin{lemma}\label{Vismonoidal}
  Disjoint union of sets is the coproduct on $V$.
\end{lemma}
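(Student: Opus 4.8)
The plan is to show directly that the disjoint union $X \sqcup Y$, together with the obvious morphisms in $V$, satisfies the universal property of a coproduct. First I would fix the data: for finite sets $X$ and $Y$, the ``inclusions'' $i_X \in V(X, X\sqcup Y)$ and $i_Y \in V(Y, X\sqcup Y)$ should be taken to be the spans represented by $X \gets X \to X\sqcup Y$ and $Y \gets Y \to X\sqcup Y$, where the left legs are identities and the right legs are the set-theoretic inclusions. Note the variance: a morphism $A \to B$ in $V$ is a class of spans $A \gets \bullet \to B$, so ``inclusions into the coproduct'' in $V$ are indeed spans whose right leg is the honest inclusion of sets. Given any finite set $Z$ and morphisms $f = [X \gets P \to Z]$ and $g = [Y \gets Q \to Z]$ in $V$, I would produce the candidate map $h \in V(X\sqcup Y, Z)$ as the class of the span $X \sqcup Y \gets P \sqcup Q \to Z$, with left leg the disjoint union of the two left legs and right leg the map that restricts to $P\to Z$ on $P$ and to $Q\to Z$ on $Q$.

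The two things to check are that $h \circ i_X = f$ and $h \circ i_Y = g$, and that $h$ is the unique morphism with this property. For the first, recall that composition in $V$ is by pullback: $h \circ i_X$ is represented by the span $X \gets C \to Z$ where $C$ is the pullback of $X \to X\sqcup Y \gets P\sqcup Q$. Since $X \to X\sqcup Y$ is the inclusion of a summand, this pullback is canonically identified with $P$ (the part of $P\sqcup Q$ lying over $X$ is exactly $P$, because the right leg of $i_X$ lands in the $X$-summand and $P$ maps into $X$ under the left leg of $f$... more precisely the right leg of $f$ is unconstrained, but the pullback is over the left leg of $i_X$ composed appropriately — I would spell this out carefully using that $X \hookrightarrow X \sqcup Y$ is a monomorphism with complement $Y$). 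The resulting span $X \gets P \to Z$ is exactly $f$. The computation for $g$ is symmetric. For uniqueness, given any $h' = [X\sqcup Y \gets R \to Z]$ with $h' \circ i_X = f$ and $h' \circ i_Y = g$, the same pullback analysis shows that $R$ decomposes as $R_X \sqcup R_Y$ over the decomposition $X \sqcup Y$, with $R_X \to X \to Z$ representing $f$ and $R_Y \to Y \to Z$ representing $g$; hence $R \cong P \sqcup Q$ compatibly with both legs, so $h' = h$ as morphisms of $V$.

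I would also separately verify the nullary case, that the empty set is initial in $V$ (for any $Z$, the span $\emptyset \gets \emptyset \to Z$ is the unique morphism $\emptyset \to Z$, since any span out of $\emptyset$ has empty apex), so that $\sqcup$ genuinely gives all finite coproducts. The main obstacle here is purely bookkeeping: keeping the variance straight (morphisms point in the direction of the right leg, so coproduct inclusions are spans with identity left leg) and carefully identifying the relevant pullbacks with summands. There is no conceptual difficulty — this is the dual of the statement already used in Proposition~\ref{prop} above, where disjoint union was identified as the categorical \emph{product} on $V$; indeed, because the definition of $V$ is visibly self-dual under reversing spans, ``$\sqcup$ is the product'' and ``$\sqcup$ is the coproduct'' are formally equivalent, and I would remark on this to keep the argument short.
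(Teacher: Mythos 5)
Your proposal is correct and matches the paper's proof in substance: the paper exhibits the bijection $V(X,Y)\times V(X',Y)\cong V(X\sqcup X',Y)$ by sending a pair of spans to the span with disjoint-union legs, with inverse given by restricting the apex to the preimages of $X$ and $X'$, which is exactly the combinatorial content of your universal-property argument (your $h$ is the forward map, your pullback analysis is the inverse). The self-duality observation --- that $V$ is visibly self-dual under reversing spans, so having coproducts given by $\sqcup$ is formally equivalent to having products given by $\sqcup$ as already used in the preceding proposition --- is a nice shortcut the paper does not spell out.
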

\begin{proof}
  To be precise, the disjoint union of two maps $[X\gets A\to Y]$ and
  $[X'\gets A'\to B']$ is $[X\sqcup X'\gets A\sqcup A'\to Y\sqcup
  Y']$.  The required isomorphism $V(X,Y)\times V(X',Y)\cong
  V(X\sqcup X',Y)$ is given by 
  \begin{multline*}
      \left([\xymatrix{X&A\ar[l]_f\ar[r]^g&Y}],[\xymatrix{X'&A'\ar[l]_{f'}\ar[r]^{g'}&Y}]\right)\qquad
\\ \mapsto
    \qquad [\xymatrix{X\sqcup X'&A\sqcup A'\ar[l]_-{f\sqcup
  f'}\ar[r]^-{g+g'}&Y}] 
  \end{multline*}
with inverse given by the restrictions to the appropriate inverse images
  \begin{multline*}
      [\xymatrix{X\sqcup X'&A\ar[l]_-{f}\ar[r]^g&Y}]\qquad
\\ \mapsto\qquad \left([\xymatrix{X&f^{-1}(X)\ar[l]\ar[r]&Y}],[\xymatrix{X'&f^{-1}(X')\ar[l]\ar[r]&Y}]\right). 
  \end{multline*}
\end{proof}

\subsection{A lax functor}
\label{sec:laxfun}

In this section we shall work heavily with the concepts around
bicategories. Unfortunately the relevant terminology is not universal,
and since the book \cite{MR2094071} of
Leinster is freely available on the arXiv we shall use the
notation he uses throughout. 

Given a bicategory $\BB$ and $0$-cells $A$ and $B$ there is a category
$\BB(A,B)$. The objects of $\BB(A,B)$ are the $1$-cells from $A$ to
$B$, and the morphisms in $\BB(A,B)$ are $2$-cells in $\BB$.

  Let $W$ denote the following bicategory of spans (compare \cite[pp. 283--285]{McL}). The bicategory $W$ has the class of finite sets as class
  of zero-cells, and $W(X,Y)$ is the 
  %symmetric monoidal
  category of spans $f = (X \xgets {f_1} A \xto {f_2} Y)$. A morphism
  $\alpha \colon f \to g$ from $f$ to $g = (X
  \xgets {g_1} A' \xto {g_2} Y)$ in $W(X,Y)$, that is, a $2$-cell
  $\alpha$ in $W$, consists of a
  map from $\alpha \colon A \to A'$ making the diagram
\begin{equation}\label{twocell}
  \xymatrix@!0{
&& A \ar[drr]^{f_2} \ar[dll]_{f_1} \ar[dd]_{\alpha} &&\\
X & & && Y\\
& &A' \ar[urr]^{g_2} \ar[ull]_{g_1} &&
}    
\end{equation}
commute. 
The composition functor
  \begin{displaymath}
    W(Y,Z) \times W(X,Y) \to W(X,Z)
  \end{displaymath}
  takes a pair $(g,f)$
  of spans $Y \xgets {g_1} B \xto {g_2} Z$ and $X 
  \xgets {f_1} A \xto {f_2} Y$ to the span $g \circ f$ represented by
  the diagram 
  \begin{displaymath}
    X \xgets {(g \circ f)_1} A \times_Y B \xto {(g \circ f)_2} Z
  \end{displaymath}
  where 
  $$A \times_Y B = \{(a,b) \in A \times B \, \colon \, f_2(a) =
  g_1(b) \}$$ 
  is a functorial choice of fiber product.
  Here $(g \circ f)_1 (a,b) = f_1(a)$ and $(g \circ f)_2(a,b) =
  g_2(b)$.
  On morphisms the composition functor is defined by the same
  formula. The identity functor $\mathbf 1 \to W(X,X)$ is given by the 
  one-cell $X \xgets = X \xto = X$. Using the universal property of
  pull-backs it is easy to verify 
  that $W$ is a bicategory.

The (strict) bicategory $\Cat$ has small categories as $0$-cells, functors as
$1$-cells and natural transformations as $2$-cells. 

In the notation we
have adopted a {\em weak functor} $F \colon \BB \to \CC$ is a lax
functor with the property that the $2$-cells $\phi_{g,f} \colon Fg \circ Ff \to F(g
\circ f)$ and $\phi_A \colon \id_{FA} \to F(\id_A)$ are
isomorphisms. At many places weak functors are called pseudo-functors.
A {\em strict functor} $F \colon \BB \to \CC$ is 
a lax
functor with the property that the $2$-cells $\phi_{g,f} \colon Fg \circ Ff \to F(g
\circ f)$ and $\phi_A \colon \id_{FA} \to F(\id_A)$ are identity
morphisms.

\begin{Def}
\label{defineendoL}
  We define a lax functor $L \colon W \to W$.
  On
  $0$-cells $L$ is the identity. Let $f = (X \xgets {f_1} A \xto {f_2}
  Y)$ be a $1$-cell in $W$. We define
  \begin{displaymath}
    L(f) = X \xgets {Lf_1} LA \xto
    {Lf_2} Y, 
  \end{displaymath}
  where $LA$ is the image of the map $(f_1,f_2) \colon A \to X \times
  Y$, that is,
  \begin{displaymath}
    LA = \{(x,y) \in X \times Y \, : \, \exists a \in A \text{ such
      that } f_1(a) = x, \text{ and } f_2(a) = y \}.
  \end{displaymath}
  The maps $Lf_1$ and $Lf_2$ are the projection maps to the first and
  the second factor. Given a $2$-cell $\alpha$ of the form
  (\ref{twocell}) in $W$, the
  set $LA$ is a subset of $LA'$, and we define $L\alpha$ to be the
  $2$-cell
\begin{equation*}
  \xymatrix@!0{
&& LA \ar[drr]^{Lf_2} \ar[dll]_{Lf_1} %\ar@{^{(}->}[dd]
\ar[dd]^-{\text{incl.}} &&\\
X & & && Y\\
& &LA' \ar[urr]_{Lf'_2} \ar[ull]^{Lf'_1} &&
}    
\end{equation*}
given by the inclusion $LA\subseteq LA'$.
 Let $g$ be a $1$-cell in $W$ of the form $g = (Y
  \xgets {g_1} B \xto {g_2} Z)$. 
There is a natural invertible $2$-cell
\begin{displaymath}
    L(g) \circ L(f) \cong \left(X\gets\{ (x,y,z) \in X \times Y \times Z \, : \,
    (x,y) \in LA \text{ and } (y,z) \in LB\}\to Z\right),
  \end{displaymath}
where the maps are the projections, and precomposing with (the inverse
of) this $2$-cell,
%There is a natural bijection
%  \begin{displaymath}
%    L(g) \circ L(f) \cong \{ (x,y,z) \in X \times Y \times Z \, : \,
%    (x,y) \in LA \text{ and } (y,z) \in LB\}.
%  \end{displaymath}
%  Under this bijection 
the structure map
  \begin{displaymath}
    L(g,f) \colon L(g) \circ L(f) \to L(g \circ f)
  \end{displaymath}
  corresponds to the projection taking $(x,y,z)$ to $(x,z)$.
% \in L(g \circ f)$. 
\end{Def}

If $f_1$ is the identity, then
\begin{displaymath}
  LA = \{ (x,f_2(x)) \in X \times Y \, \colon x \in X\}
\end{displaymath}
is the graph of $f_2$ and $A \cong LA$. Note that if also $g_1$ is the
identity, then the structure map $L(g,f) \colon L(g) \circ L(f) \to
L(g \circ f)$ is an identity morphism. Thus considering $\Fin$ as a
bicategory with only identity $2$-cells, there is a strict functor
$\Fin \to W$ which is the identity on $0$-cells and which takes a map $f
\colon X \to Y$ to the $1$-cell $(X \xgets{=} X \xto f Y)$.

The following lemma is straight forward
\begin{lemma}\label{Lpreservescoproducts}
  Considered as an endofunctor in $V$, $L$ preserves coproducts.
\end{lemma}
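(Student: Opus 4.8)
The plan is to use the computation already in hand, namely that the category $V$ of spans has finite sets as the coproduct, combined with an explicit description of what the endofunctor $L$ does on a coproduct of spans. First I would recall from Lemma~\ref{Vismonoidal} that for spans $f = (X \xgets{f_1} A \xto{f_2} Y)$ and $f' = (X' \xgets{f'_1} A' \xto{f'_2} Y')$ the coproduct in $V$ is given by $f \sqcup f' = (X \sqcup X' \xgets{f_1 \sqcup f'_1} A \sqcup A' \xto{f_2 + f'_2} Y \sqcup Y')$, and that the coproduct structure isomorphism $V(X,Y) \times V(X',Y) \cong V(X \sqcup X', Y)$ is the one written there.

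The key computational step is to check that $L(f \sqcup f')$ is naturally isomorphic to $L(f) \sqcup L(f')$ as objects of $V(X \sqcup X', Y \sqcup Y')$. By definition $L(A \sqcup A')$ is the image of $(f_1 \sqcup f'_1, f_2 + f'_2) \colon A \sqcup A' \to (X \sqcup X') \times (Y \sqcup Y')$. Since the map sends $A$ into the summand $X \times Y$ and $A'$ into the summand $X' \times Y'$ of $(X\sqcup X')\times(Y\sqcup Y')$, its image decomposes as a disjoint union of the image of $(f_1,f_2) \colon A \to X \times Y$ and the image of $(f'_1,f'_2) \colon A' \to X' \times Y'$, i.e.\ $L(A \sqcup A') = LA \sqcup LA'$, and the two projection maps restrict on the summands to $Lf_1, Lf_2$ and $Lf'_1, Lf'_2$ respectively. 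Hence $L(f \sqcup f') = L(f) \sqcup L(f')$ on the nose under the evident identification, which is exactly compatibility of $L$ with the coproduct structure map.

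Finally I would note that there is nothing to check on $0$-cells, since $L$ is the identity there and the coproduct of objects of $V$ is literally disjoint union of finite sets. It remains only to observe that this identification is natural: given a map $\beta \colon Y \sqcup Y' \to Z$ or a span out of $Y \sqcup Y'$, functoriality of $L$ together with the fact (already used in Definition~\ref{defineendoL}) that $L$ is computed by taking images makes the square relating $L(f\sqcup f')$ and $L(f)\sqcup L(f')$ commute; this is routine because everything in sight is a subset inclusion or a projection. I do not expect a genuine obstacle here --- the only mildly delicate point is keeping the bookkeeping of the four-fold product $(X \sqcup X') \times (Y \sqcup Y')$ straight and noting that the ``cross terms'' $X \times Y'$ and $X' \times Y$ receive no elements of the image, which is immediate from the definition of $f_2 + f'_2$ as the map that is $f_2$ on $A$ and $f'_2$ on $A'$.
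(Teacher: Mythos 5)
The paper declares this lemma ``straight forward'' and gives no proof at all, so there is no official argument to compare against; the task is simply to judge whether your argument is sound. It is. The central computation is right: the structure map $(f_1\sqcup f'_1,\,f_2\sqcup f'_2)\colon A\sqcup A'\to(X\sqcup X')\times(Y\sqcup Y')$ sends $A$ into the block $X\times Y$ and $A'$ into the block $X'\times Y'$, so the image decomposes as $LA\sqcup LA'$ with the two projections restricting to $(Lf_1,Lf_2)$ and $(Lf'_1,Lf'_2)$, giving $L(f\sqcup f')=L(f)\sqcup L(f')$ on the nose. (One cosmetic slip: the second leg of $f\sqcup f'$ is the disjoint union $f_2\sqcup f'_2\colon A\sqcup A'\to Y\sqcup Y'$, not the fold $f_2+f'_2$ with common target; your prose ``the map that is $f_2$ on $A$ and $f'_2$ on $A'$'' makes clear you mean the former.)

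For the record, there is an even shorter route to the literal statement. Right after Definition~\ref{defineendoL} the paper notes that $L$ fixes any span whose left leg is an identity; in particular $L$ fixes the coproduct inclusions $[X\xgets{=}X\to X\sqcup X']$, and since $L$ is the identity on $0$-cells it sends a coproduct cone to itself, which is all ``preserves coproducts'' asks for. What you actually verify is the stronger claim that $L$ commutes with the binary coproduct bifunctor $\sqcup\colon V\times V\to V$, which is the form that is really used downstream (in Lemma~\ref{Iismonoidal}), so the extra work is well spent.
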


We can use the lax functor $L$ to define a bicategory $\Rel$ of
relations. The $0$-cells of $\Rel$ is the class of finite sets.
Given finite sets $X$ and $Y$ the category $\Rel(X,Y)$
is the full subcategory of 
$W(X,Y)$ consisting of $1$-cells in $W$ of the form $f = (X \xgets
{f_1} A \xto {f_2} Y)$, where $A$ is a subset of $X \times Y$. The
composition $g \circ_{\Rel} f$ of two $1$-cells in $\Rel$ is the $1$-cell $L(g \circ f)$,
where $g \circ f$ is the compositon of $f$ and $g$ considered as
$1$-cells in $W$. It is easily verified that this defines a bicategory $\Rel$.

\begin{remark}
%{\color{red}Morten, is this from Leinster? On another
%    note: should we somewhere give the dictionary you sent me relating
%  Leinster's notation to the rest of us?}
  The lax functor $L \colon W \to \Rel$ is left adjoint to the inclusion
  $\Rel \subseteq W$ in the following sense:
  A lax functor $F \colon \BB \to \CC$ is {\em left adjoint} to the lax functor $G \colon \CC
  \to \BB$ if there exist a transformation $\beta \colon 1 \to G \circ F$
  and a transformation $\gamma \colon F \circ G \to 1$ such
  that the composites
  \begin{displaymath}
    \BB(A,GX) \xto{F} \CC(FA,FGX) \xto{\CC(\id_{FA},\gamma_{X})} \CC(FA,X)
  \end{displaymath}
  and
  \begin{displaymath}
    \CC(FA,X) \xto{G} \BB(GFA,GX) \xto{\BB(\beta_A,\id_{GX})} \BB(A,GX)
  \end{displaymath}
  are adjoint functors for all $0$-cells $A$ in $\BB$ and $X$ in $\CC$.  
\end{remark}

Note that $L \colon W \to \Rel$ is a strict functor and that if
$\alpha$ is an invertible $2$-cell in $W$, then $L\alpha$ is an
identity $2$-cell in $\Rel$. This implies that there is a strict
functor $L_0 \colon V \to \Rel$ taking a morphims, that is, $1$-cell $f$ to
$Lf$. On the other hand the
inclusion $\Rel \subseteq W$ is only a lax functor.
\begin{Def}
\label{defineT}
  Consider the category $V$ as a bicategory with only identity
  $2$-cells. The lax functor $T \colon V \to W$ is defined to be the composition
  \begin{displaymath}
    V \xto{L_0} \Rel \xto{\subseteq} W
  \end{displaymath}
  of the strict functor $L_0$ and the inclusion of $\Rel$ into $W$.
\end{Def}

The functors $Fin^o \to V$, $(f
\colon X \to Y) \mapsto 
f^* = [Y \xleftarrow{f} X \xto{=} X]$  and $Fin \to V$, $(g \colon Y
\to X) \mapsto g_* = [Y \xleftarrow{=} Y \xto{g} X]$ allow us to
consider both $\Fin$ and $\Fin^o$ as subcategories of $V$.
Note that the composite lax functors $f \mapsto T(f^*)$ and $f \mapsto
T(f_*)$ are both strict functors. By abuse of notation we shall write
$T \colon \Fin^o \to W$ and $T \colon \Fin \to W$ for these strict
functors. 

\begin{Def}
\label{definepseudo}
  Let $\CC$ be a category with finite coproducts. We define a weak functor $\CC \colon W \to Cat$.
  Given a finite set $X$ we let $\CC(X) = \CC^X$ be the
  $X$-fold product of $\CC$. Given a one-cell $f = (X \xgets {f_1} A
  \xto {f_2} Y)$ the functor $\CC(f) \colon \CC(X) \to \CC(Y)$ is
  given by the formula
  \begin{displaymath}
    (\CC(f)(c))(y) = \coprod_{a \in f_2^{-1}(y)} c(f_1(a)).
  \end{displaymath}
  Given a two-cell of the form \ref{twocell},
%$$%\begin{equation}
%  \xymatrix@!0{
%&& A \ar[drr]^{f_2} \ar[dll]_{f_1} \ar[dd]_{\alpha} &&\\
%X & & && Y\\
%& &A' \ar[urr]^{f'_2} \ar[ull]_{f'_1} &&
%}    
%$$%\end{equation}
we define $\CC(\alpha) \colon \CC(f) \to \CC(f')$ to be the natural transformation 
with $(\CC(\alpha)(c))(y)$ equal to the composition
\begin{displaymath}
  \coprod_{a \in f_2^{-1}(y)} c(f_1(a)) \cong \coprod_{a' \in
    (f_2')^{-1}(y)} \coprod_{a \in \alpha^{-1}(a')} c(f'_1(a')) \to 
  \coprod_{a' \in (f'_2)^{-1}(y)} c(f'_1(a')),
\end{displaymath}
where the last map is the sum map.
\end{Def}

Composing the weak functor $\CC \colon W \to \Cat$ with the lax
functor $T \colon V \to W$ we obtain a lax functor $\CC T \colon V \to
\Cat$, which we by abuse of notation refer to simply as $\CC \colon V
\to \Cat$. Composing with the inclusion $\Fin \subseteq V$ 
we obtain a weak functor
\begin{eqnarray*}
 \CC \colon \Fin \to \Cat, \qquad
 X \xto f Y \quad \mapsto \quad \CC^X &\xto {\CC T(f)}&
 \CC^Y,\\ 
%(\CC T (f))(c)
c &\mapsto& y \mapsto \coprod_{x \in f^{-1}(y)} c(x).
\end{eqnarray*}
On the other hand, composing with the inclusion $\Fin^o \subseteq V$ we obtain a strict functor
\begin{eqnarray*}
 \CC \colon \Fin^o \to \Cat, \qquad
 X \xgets f Y \quad \mapsto \quad \CC^X &\xto {\CC T(f)}&
 \CC^Y,\\ 
%(\CC T (f))(c)
c &\mapsto& c \circ f.
\end{eqnarray*}

\section{The Loday functor}
\label{sec:zx}
In this section we shall construct the Loday functor $\ddT_X
A$. It is a functor in the unpointed space $X$ and the commutative
$\ess$-algebra $A$. When $X$ is the circle group, this a version
of topological Hochschild homology, which we think of as the cyclic
nerve of $A$ in the category of $\ess$-modules.

The construction of $\dT[X]A$ is quite technical. It can be
summarized in the following steps.
The crucial ingredient of B{\"o}kstedt's definition of topological
Hochschild homology is the stabilizations indexed by the skeleton category $\I$
of the category of finite sets and injective functions.
Firstly,
we note that $\I$ is a category with finite coproducts, and we apply the
results of the previous section to obtain 
an op-lax functor $\I T \colon V \to \Cat$
from the
category of spans $X \gets A \to Y$ of finite sets to the category of categories.
Secondly, we
construct the transformation $G^A$ associated to a commutative $\ess$-algebra
$A$. This is a transformation from the lax
functor given by the composition $Fin \xto{T} W \xto \I \Cat$ to the
constant functor from $Fin$ to $\Cat$ with value $\gs$. Thirdly,
using Street's first construction we rectify $\I$ to a strict functor
$\widetilde \I \colon V \to \Cat$. Finally, applying the homotopy colimit
construction, we obtain a functor $X \mapsto \ddT_XA$ from $Fin$ to $\gs$.

\subsection{On the functoriality of homotopy colimits}
%(May be moved to an appendix, and may include a quick review of some
%of the definitions used, and if desired some proofs).
\label{funhocolim}
B\"okstedt's construction of topological Hochschild homology involves
in each degree a stabilization by means of a 
homotopy colimit over a certain category. When the degrees vary, so
do the categories, and since book-keeping 
becomes rather involved we offer an overview of the
functoriality properties of homotopy colimits. 

First some notation.
Given bicategories $\BB$ and $\CC$, the bicategory $Lax(\BB,\CC)$ has the
class of lax functors $F \colon \BB \to \CC$ as $0$-cells. The
$1$-cells of $Lax(\BB,\CC)$ are transformations and the $2$-cells are
modifications.  
The bicategory $[\BB,\CC] \subseteq Lax(\BB,\CC)$ has the
class of strict functors $F \colon \BB \to \CC$ as $0$-cells. The
$1$-cells of $[\BB,\CC]$ are strict transformations and the $2$-cells are
modifications.

Let $Cat$ be the category of small categories, 
and consider $J\in Cat$ as a bicategory with only identity
$2$-cells. Given strict functors $E,F \colon J \to Cat$ a
strict transformation $G \colon F \rightarrow E$ consists of the 
following data: for every $i\in J$ we have a
functor $G_i\colon F(i)\to E(i)$ and for every $f\colon i\to j\in J$ we
have a natural transformation $G_f\colon E(f) G_i\Rightarrow G_j F(f)$ such
that if $g\colon j\to k\in J$ we have that $G_{gf}=G_gG_f$ and $G_{id_j}=id_{G_j}$.
The category $[J,Cat]/E$ has as objects pairs $(F,G)$ of a functor $F \colon J
\to Cat$ and a transforamtion $G \colon F \rightarrow E$.
A
morphism $(F,G)\to (F',G')$ in $[J,Cat]/E$ consists of a natural
transformation $\epsilon\colon F\Rightarrow F'$ and a modification $\eta\colon
G\Rightarrow G'\epsilon$.  That $\eta$ is a modification means that
for each $i\in J$ we have a natural transformation $\eta_i\colon
G_i\Rightarrow G'_i\epsilon_i$ such that for each $f\colon i\to j\in
J$ we have $\eta_jG_f=G'_f\eta_i\colon G_i\Rightarrow
G'_j\epsilon_jF(f)=G'_jF'(f)\epsilon_i$.

Let $K$ be a category with all small
coproducts. Considering $K$ as a constant  functor from $J$ to (big)
categories, we can extend the above construction to give a category $[J,Cat]/K$.
The homotopy colimit is the functor
$$\hocolim\colon [J,Cat]/K\to [J\times\Delta^o,K]$$
sending $(F,G)$ to $\hocolim[F]G$, which is the functor taking
$(j,[q])\in J\times\Delta^o$ to
$$\coprod_{x_0\gets\dots\gets x_q\in F(j)}G_j(x_q).$$

If $(\epsilon,\eta)\colon (F,G)\to (F',G')$ is a morphism, we
write $\hocolim[\epsilon]\eta$ for the corresponding natural
transformation $\hocolim[F]G\Rightarrow \hocolim[F']G'$ given as
$$
\xymatrix{
  {\xycoprod[x_0\gets\dots\gets x_q\in F(i)]}G_j(x_q)\ar[r]^{\coprod\eta_i}&
  {\xycoprod[x_0\gets\dots\gets x_q\in F(i)]}G'_j\epsilon_i(x_q)\ar[r]&
  {\xycoprod[y_0\gets\dots\gets y_q\in F'(i)]}G'_j(y_q)
}
$$
where the last map is the obvious one.

\subsection{The transformation {$G^A$}}
\label{thelont}

% Recall that a {\em left natural transformation} between
% pseudo-functors is ... (pseudo-functors are both op-lax and lax)

% {\color{red} P\aa st\aa \ at dette fungerer, og foresl\aa \ at bruke
%   konstruksjonen i avsnittet over

% This subsection must be rewirtten}

% Let us choose a map $S^0 \to S^1$ and consider $S^1$ as an object of the category $S^0/\spaces$ of spaces
% under $S^0$. According to Lemma \ref{freesymm} there is a strong
% symmetric monoidal functor
% from $\I \cong W(*,*)$ to pointed spaces taking $i$ to
% $S^i = \wedge_{x \in i} S^1$. 

Let $\Sigma$ be the 
% isomorphism subcategory of $V$ containing all
% objects and all isomorphisms in $V$. The category $\Sigma$ is isomorphic
% to the 
category of finite sets and bijections
%  and the inclusion
% $\Sigma \subseteq V$ is strong symmetric monoidal. Restricting the
% strong symmetric monoidal functor from $\I \cong W(*,*)$ to pointed spaces taking $i$ to
% $S^i = \wedge_{x \in i} S^1$ to
%$\Sigma$, we obtain 
and choose a strong symmetric monoidal functor
$S \colon \Sigma \to \spaces$ with $S(i) = S^i$.
We may consider $S$ as a transformation
% By Proposition \ref{strongoplax} the functor $S$
% induces a left
% op-lax natural transformation 
% % \begin{displaymath}
% %   \xymatrix{
% %   \diagram
% %     A \rtwocell^f_g &B \\
% %   \enddiagram}
% % \end{displaymath}
% $$
% \xymatrix{
% V\ar@{=}[r]\ar[d]^{\Sigma}\ar@{}[dr]|{\overset{}{\Rightarrow}}&V\ar[d]^{\spaces}\\
% Cat\ar@{=}[r]&Cat}
% $$
% from $\Sigma \colon V \to \Cat$, $X \mapsto \Sigma^X$ to $\spaces
% \colon V \to \Cat$, $X \mapsto \spaces^X$. 
% Let $S$ denote the left op-lax natural transformation  
$$
\xymatrix{
Fin
\ar@{=}[r]\ar[d]^{\Sigma}\ar@{}[dr]|{\overset{S}{\Rightarrow}}&Fin \ar[d]^{\spaces}\\
Cat\ar@{=}[r]&Cat.}
$$
% obtained by {\em pre}composing with the inclusion $Fin \subseteq V$
% taking $f \colon X \to Y$ to $[X \xleftarrow {=} X \xto f Y]$.
% \begin{note}
%   For
%   those that worry about our cavalier choice of map $S^0\to S^1$ (in
%   our model of $S^1$ with only one vertex, the constant one is the only choice), we
%   remark that the above op-lax natural transformation $S$ is independent of
%   the choice of this map: it is only a slick way of ensuring that expressions of the form
%   $\bigwedge_{t\in T}S^{i(t)}$ are functorial in isomorphisms of the
%   finite set $T$.
% \end{note}
Likewise, given a commutative
$\ess$-algebra $A$, we can use the multiplication in $A$ to construct a \lont
$$
\xymatrix{
Fin
\ar@{=}[r]\ar[d]^{\Sigma}\ar@{}[dr]|{\overset{A}{\Rightarrow}}&Fin \ar[d]^{\spaces}\\
Cat\ar@{=}[r]&Cat.}
$$
with $A(j)\cong \bigwedge_{x\in
  X}A(S^{j(x)})$ for $j\in\Sigma^X$.
% {\color{red} ((Strictly speaking: if we are going to do it this way
%   (and not through the original functorial construction of smash
%   products), we have to replace the target side of Map also by your
%   contraption.  I have chosen the road of least resistance by letting
%   the old diagrams stay, and leaving it to the reader to understand
%   that these induce the same diagrams through the ``canonical
%   isomorphism''))}
We shall use the transformations $S$ and $A$  to
construct a transformation {$G^A$} from 
the weak functor $\I \colon \Fin \to \Cat$ with $\I(S) = \I^S$ to the constant functor $Fin\to
Cat$ sending everything to $\gs$.

% Let $Z$ be a pointed set, and let $X$ be a finite set.  Then
% $\bigwedge^XZ$ is defined as the quotient of $\mathcal{E}ns(X,Z)$ by the set of
% functions that send a point in $X$ to the base point in $Z$.  This
% construction is functorial \wrt isomorphisms in $X$ (by using the
% inverse).  We write $S^X$ as shorthand for $\bigwedge^X\Sl$.  Also, if
% $Z=\{Z_x\}_{x\in X}$ is a set of spaces indexed by $X$, we define
% $\bigwedge_{x\in X}Z_x$ to the quotient of the space of functions from
% $X$ to $\cup Z$ sending $x$ to $Z_x$ by the subset of functions
% sending at least one point to a basepoint.

If $X$ and $Y$ are pointed simplicial sets, $Map_*(X,Y)$ denotes the
$\Gamma$-space given by sending $Z\in\Gamma^o$ to the space of pointed
maps
from $X$ to the singular complex of the geometric realization of $Z\smsh Y$.

For a finite set $T$, let 
$$G^A_T\colon \I^T\to\gs$$
be the functor which to the object $i\in\I^T$
assigns the $\Gamma$-space 
$$Map_*(S(i),A(i))\cong Map_*(\bigwedge_{t\in
  T}S^{i(t)},\bigwedge_{t\in T}A(S^{i(t)})),$$
 and to a morphism
$\alpha\colon i\to j$
assigns the map
$$
\begin{CD}
  Map_*(\bigwedge_{t\in T}S^{i(t)},\bigwedge_{t\in T}A(S^{i(t)}))\\@VVV\\
  Map_*(\bigwedge_{t\in T}S^{j(t)-\alpha i(t)}\smsh
  \bigwedge_{t\in T}S^{i(t)}, 
  \bigwedge_{t\in T}S^{j(t)-\alpha i(t)}\smsh\bigwedge_{t\in T}
  A(S^{i(t)}))\\@VVV\\
  Map_*(\bigwedge_{t\in T}S^{j(t)},\bigwedge_{t\in T}A(S^{j(t)})),
\end{CD}
$$
where the first map is suspension and the second map is given by the
isomorphism $$\bigwedge_{t\in T}S^{j(t)-\alpha i(t)}\smsh
\bigwedge_{t\in T}S^{i(t)}\cong \bigwedge_{t\in T}S^{j(t)}$$ and the
map
\begin{align*}
  \bigwedge_{t\in T}S^{j(t)-\alpha i(t)}\smsh\bigwedge_{t\in T}
  A(S^{i(t)})\cong &\bigwedge_{t\in T}\left(S^{j(t)-\alpha i(t)}\smsh
    A(S^{i(t)})\right)\\
  \to &\bigwedge_{t\in T}A(S^{j(t)-\alpha
    i(t)}\smsh S^{i(t)})\cong \bigwedge_{t\in T}A(S^{j(t)}).
\end{align*}

If $\phi\colon S\to T$ is a function of finite sets, there is a
natural transformation 
%{\color{red} ((What is the kosher way of writing $\I^\phi$?))}
$$G^A_\phi\colon G^A_S\to G^A_T\circ \I(\phi)$$
%$$
%\xymatrix{
%{\I^S}
%\ar[rr]^{G^A_S}
%\ar[d]_{{\I(\phi)}}
%&
%\ar@{=>}
%%{{  }{=}{>}}
%[dl]_{G^A_\phi}&\gs\\
%{\I^T}\ar[urr]_{G^A_T}}
%$$
\newdir{|>}{!/2pt/dir{|}
*:(1,-.2)\dir^{>}
*:(1,+.2)\dir_{>}}
%{\color{red}
$$
\xymatrix{
{\I^S}
\ar[rr]^{G^A_S}
\ar[d]_{{\I(\phi)}}
&\ar@{=|>}(7,-7)_>>{G^A_\phi}
%\ar@^{=>}(5,-8)
&\gs\\
{\I^T}\ar[urr]_{G^A_T}}
$$
%}
given on $i\in\I^S$ by sending a map
$f\colon S(i) \to A(i)$
%\bigwedge_{s\in S}S^{i(s)}\to\bigwedge_{s\in S}A(S^{i(s)})$ 
to the composite
$$
\begin{CD}
  \bigwedge_{t\in T}S^{\coprod_{s\in\phi^{-1}(t)}i(s)}@.@.
  \bigwedge_{t\in T}A(S^{\coprod_{s\in\phi^{-1}(t)}i(s)})\\
  @A{\cong}AA @. @A{\cong}AA\\
\bigwedge_{t\in T}\bigwedge_{s\in\phi^{-1}(t)}S^{i(s)}@.
\bigwedge_{t\in T}\bigwedge_{s\in\phi^{-1}(t)}A(S^{i(s)})@>>>
\bigwedge_{t\in T}A(\bigwedge_{s\in\phi^{-1}(t)}S^{a_{i(s)}})\\
@A{\cong}AA @A{\cong}AA @.\\
\bigwedge_{s\in S}S^{i(s)}@>f>>\bigwedge_{s\in S}A(S^{i(s)}) @.
\end{CD},
$$
where the top vertical maps are structure maps for $S$, being a
transformation, 
and the unmarked arrow is multiplication in $A$.

This structure assembles into the fact that $G^A$ is a transformation 
from the weak functor $S\mapsto\I^S$ to the constant functor $Fin\to
Cat$ sending everything to $\gs$ (one has to check that for every
$\psi\colon R\to S\in Fin$ the natural
transformations
$$
\xymatrix{
  &{\I^R}\ar[rr]^{G^A_R}\ar[dd]^{{\I^{\phi\psi}}}\ar[dl]_{{\I^{\psi}}}
  &\ar@{=|>}(23,-14)_{G^A_{\phi\psi}}&\gs\\
  {\I^S}\ar[dr]_{{\I^{\phi}}}&\ar@{=|>}(4,-13)_{{\I(\psi,\phi)}}&\\
  &{\I^T}\ar[uurr]_{G^A_T}
}
%$$\xymatrix{
%  &{\I^R}\ar[rr]^{G^A_R}\ar[dd]^{{\I^{\phi\psi}}}\ar[dl]_{{\I^{\psi}}}
%  &\ar@{=>}[ddl]^{G^A_{\phi\psi}}&\gs\\
%  {\I^S}\ar[dr]_{{\I^{\phi}}}&\ar@{=>}[l]^{{\I(\psi,\phi)}}&\\
%  &{\I^T}\ar[uurr]_{G^A_T}
%}
\qquad\text{and}\qquad
\xymatrix{
  &{\I^R}\ar[rr]^{G^A_R}\ar[dl]_{{\I^{\psi}}}
  &\ar@{=|>}(14,-7)_>>{G^A_{\psi}}&\gs\\
  {\I^S}\ar[dr]_{{\I^{\phi}}}\ar[urrr]_{G^A_S}
  &\ar@{=>}[d]_{G^A_{\phi}}&\\
  &{\I^T}\ar[uurr]_{G^A_T}&
}$$
are equal - the unitality degenerates as $G^A_{id_S}=id_{G^A_S}$). 

\begin{note}
  The reader may wonder why $G^A$ is only defined on $Fin$ (rather
  than $V$).  The reason is that there exists no extension of $G^A$ to
  $V$, for if one did, a factorization $p\circ i\colon  \{1\}\subseteq\{1,2\}\fib\{1\}\in Fin$ of the identity would then induce a
  splitting 
$$G_{i^*}^\ess(p^*j)\circ G_{p^*}^\ess(j) \colon Map_*(S^2,S^2) \to
  Map_*(S^4,S^4)\to Map_*(S^2,S^2)$$ (here $j\in\I^{\{1\}}$ is
  represented by $\{1,2\}\in\I$), which obviously does not exist since
  $\Z/2\Z=\pi_1\Omega^4S^4$ does not contain $\Z=\pi_1\Omega^2S^2$ as
  a retract (evaluate the $\Gamma$-spaces on $S^0$).

However, in
section \ref {sec:fundcofseq}, and in subsequent applications, it is
crucial for the equivariant structure of the Loday functor that $\I$ is
defined on $V$.  

In section \ref{subsection:teichmueller} we will give
a very weak extension of  $\pi_0G^A(S^0)$ to cover also the maps $[T\gets
T\times K=T\times K]\in V$ induced by projection onto the first factor.
\end{note}

\subsection{Rectifying $G^A$ and the definition of $\TS(A)$}
We make a tiny recollection on Street's first construction \cite[p. 226]{MR0347936}
and make a
remark about right cofinality, and then we construct the restriction
of the Loday functor
of $A$ to finite sets. 

Let $F \colon J \to \Cat$ be a lax functor. Street's first
construction is a functor
$\widetilde F \colon J \to \Cat$ defined as follows: Given an object
$j$ of $J$, the category $\widetilde F(j)$ has as objects pairs $(\varphi_1,x_1)$
where $\varphi_1 \colon j_1 \to j$ is a morphism in $J$ and $x_1$ is an
object of $F(j_1)$. A morphism from $(\varphi_1,x_1)$ to
$(\varphi_0,x_0)$ consists of a morphism $\psi \colon j_0 \to j_1$
with $\varphi_0 = \varphi_1 \psi $ and a morphism $\alpha \colon
x_1 \to F(\psi)(x_0) $ in $F(j_1)$. The composition of two morphisms
$(\psi_2,\alpha_2) \colon (\varphi_2,x_2) \to (\varphi_1,x_1)$
and
$(\psi_1,\alpha_1) \colon (\varphi_1,x_1) \to (\varphi_0,x_0)$
is the pair $(\psi_2 \psi_1,\beta)$ where $\beta$ is the composition
\begin{displaymath}
  F(\psi_2\psi_1) x_0 \xgets {F(\psi_2,\psi_1)} F(\psi_2)F(\psi_1) x_0
  \xgets {F(\psi_2)\alpha_1} F(\psi_2)x_1 \xgets {\alpha_2} x_2.
\end{displaymath}
If $\psi \colon j \to j'$ is a morphism in $J$, then $\widetilde
F(\psi) \colon \widetilde F(j) \to \widetilde F(j')$ is given by the
formula $\widetilde F(\psi)(\varphi,x) = (\psi \varphi,x)$.

For every object $j$ of $J$ there is a functor $r = r_j \colon \widetilde
F(j) \to F(j)$ with $r_j(\varphi_1,x_1) =
F(\varphi_1)(x_1)$ and with $r_j(\psi,\alpha)$ equal to the composite
\begin{displaymath}
  F(\varphi_1)x_1 \xto{F(\varphi_1)\alpha} F(\varphi_1)F(\psi)x_0
  \xto{F(\varphi_1 \psi)} F(\varphi_1\psi)x_0 = F(\varphi_0) x_0.
\end{displaymath}
These functors assemble to a transformation $r
\colon \widetilde F \to F$.
% Conversely, there is a functor $s = s_j \colon
% F(j) \to \widetilde F(j)$ given by the formula $s_j(f) =
% (\id_j,f)$. 
Note that for every object $x \in F(j)$
% $(\varphi_1,x_1)$ of
% $\widetilde F(j)$ 
the structure morphism $x \to r_j(\id_j,x) = F(\id_j)(x)$ is an object of
the category $x/r_j$. Given another object $\alpha \colon x \to
r_j(\varphi_0,x_0) = F(\varphi_0)(x_0)$
of the category $x/r_j$, the pair $(\varphi_0,\alpha)$ is the only morphism
in $x/r_j$ from $x \to F(\id_j)(x)$ to $\alpha$. Thus the category $x/r_j$ has an
initial element so the functor $r_j$ is
right cofinal.

Now we use Street's first construction to build the Loday functor.
Let $V$ denote the category of spans of finite sets. 
In the notation introduced below
%Definition \ref{defineT} and 
Definition \ref{definepseudo}
we have a lax functor $\I \colon V
\to \Cat$ such that 
$\I(f^*) = (\I L)([Y \xleftarrow f X \xto = X]) \colon \I^Y \to \I^X$ is the functor given  by
precomposing with $f
\colon X \to Y$ and $\I(f_*) = \I([X \xleftarrow = X \xto f Y]) \colon
\I^X \to \I^Y$ is a functor with 
\begin{displaymath}
  \I(f_*)(j)(y) \cong \coprod_{x \in f^{-1}(y)} j(x), 
\end{displaymath}
for $j \in \I^X$. 

\begin{Def}
The functor $\widetilde \I \colon V \to \Cat$ and the 
transformations $r \colon \widetilde \I \to \I$ are defined 
by applying Street's first construction with $J = V$ and $F = \I$.
\end{Def}

Composing with the
  inclusion $Fin\to V$ ({\em pre}composition with functors takes lax
  functors to lax functors) we get a transformation of weak functors
$G^A\circ r\colon\tilde \I \to\gs$:
$$
\xymatrix{
Fin\ar@{=}[r]\ar[d]&Fin\ar[d]\ar@{=}[r]&Fin\ar[dd]^{\gs}\\
V\ar@{=}[r]\ar[d]^{\tilde \I }\ar@{}[dr]|{\overset{r}{\Rightarrow}}&V\ar[d]^{ \I }\ar@{}[r]^{G^A}_{\Rightarrow}&\\
Cat\ar@{=}[r]&Cat\ar@{=}[r]&Cat}
.$$

\begin{Def}
  Given a finite set $S$ and a commutative $\ess$-algebra $A$ the
  {\em Loday functor of $A$ evaluated at $S$} is the $\Gamma$-space $\TS[S](A)$ 
  given by the homotopy colimit
  \begin{displaymath}
    \hocolim[\widetilde  \I (S)] G^A_S \circ r_{S}.
  \end{displaymath}
By the functoriality of the homotopy colimit explained in  Section \ref{funhocolim}, this construction is
functorial in $S\in Fin$, so that we have a functor $\TS(A) \colon \Fin \to
\gs$. 
\end{Def}
Note that we do not use the ring structure on $A$ in the construction
of $\TS[S](A)$ for a fixed finite set $S$, and also note that the action of the
automorphism group of $S$ on $\TS[S](A)$ does not depend on the ring
structure. 

We have introduced the category $V$ in order to have an isomorphism
$\widetilde  \I (S)^{\Group} \cong \widetilde  \I (S/\Group)$ whenever
$\Group$ is a finite group acting on a finite set $S$. This gives us
the following
crucial lemma.  
\begin{lemma}
\label{Bolemma}
  Let $S$ be a finite set and let $\Group$ be a group acting
  on $S$.
  For every functor $Z \colon \I^S \to Spaces$ the map
  \begin{displaymath}
    \hocolim[\widetilde \I(S)] Z \circ r_{S} \to \hocolim[\I^S] Z 
%= \hocolim[\I^S] Z \circ r_{S} \circ s_{S} \to   
  \end{displaymath}
  induces a weak equivalence on $\Group$-fixed points. 
\end{lemma}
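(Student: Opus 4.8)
The plan is to reduce the statement to the fact that the functor $r_S \colon \widetilde\I(S) \to \I^S$ is right cofinal (established just above the lemma), and that $\Group$-fixed points of a homotopy colimit over a $\Group$-category are themselves a homotopy colimit over the fixed-point category. First I would unwind the two sides equivariantly. The $\Group$-action on $S$ induces a $\Group$-action on $\I^S$ by permuting the factors, hence on $\widetilde\I(S)$ via $\widetilde\I$ applied to the isomorphisms $g_* \colon S \to S$ in $V$; by the remark preceding the lemma we have $\widetilde\I(S)^{\Group} \cong \widetilde\I(S/\Group)$, and correspondingly (using Corollary \ref{fixfix} and the definition of $\I$ on $V$) $(\I^S)^{\Group}$ is the subcategory of $\Group$-fixed objects and morphisms. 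Since $Z \colon \I^S \to \text{Spaces}$ is only assumed $\Group$-equivariant in the weak sense needed to form fixed points of the homotopy colimit — really one wants $Z\circ r_S$ and $Z$ to carry $\Group$-actions compatibly — the map in question is $\Group$-equivariant by construction, so it makes sense to ask for an equivalence on fixed points.

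The key step is the identification of fixed points. For a finite group $\Group$ acting on a small category $\D$ and a $\Group$-equivariant functor $Y \colon \D \to \text{Spaces}$, one has a natural homeomorphism (or at least a natural weak equivalence, which is all we need) $\bigl(\hocolim[\D] Y\bigr)^{\Group} \cong \hocolim[\D^{\Group}] Y|_{\D^{\Group}}$ whenever the action on $\D$ is free on objects in an appropriate sense — more precisely, the relevant statement here is that because $\Group$ acts freely on $S$, it acts freely on the nerve of $\I^S$ away from degeneracies coming from fixed data, and the Bökstedt-style argument identifies the fixed points of the bar construction $\coprod_{x_0\gets\cdots\gets x_q} Y(x_q)$ with the analogous bar construction indexed by $\Group$-orbits of chains. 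Applying this to both $\D = \widetilde\I(S)$ with $Y = Z\circ r_S$ and $\D = \I^S$ with $Y = Z$, the map of the lemma becomes, on fixed points, the map
\begin{displaymath}
  \hocolim[\widetilde\I(S)^{\Group}] (Z\circ r_S)|^{\Group} \to \hocolim[(\I^S)^{\Group}] Z|^{\Group},
\end{displaymath}
and under the isomorphisms $\widetilde\I(S)^{\Group}\cong\widetilde\I(S/\Group)$ and $(\I^S)^{\Group}\cong \I^{S/\Group}$ this is precisely the natural map $\hocolim[\widetilde\I(S/\Group)] (\ldots)\circ r_{S/\Group} \to \hocolim[\I^{S/\Group}](\ldots)$ built from $r_{S/\Group}$.

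Finally I would invoke right cofinality of $r_{S/\Group} \colon \widetilde\I(S/\Group) \to \I^{S/\Group}$, proved in the discussion above the lemma (each comma category $x/r_j$ has an initial object), together with the standard fact that homotopy colimits are invariant under restriction along a right cofinal functor: for any $W \colon \I^{S/\Group} \to \text{Spaces}$ the map $\hocolim[\widetilde\I(S/\Group)] W\circ r_{S/\Group} \to \hocolim[\I^{S/\Group}] W$ is a weak equivalence. This gives the desired equivalence on $\Group$-fixed points. The main obstacle I expect is the bookkeeping in the fixed-point identification: one must check that the $\Group$-action on $\widetilde\I(S)$ induced through Street's construction genuinely matches the action on $\I^S$ under $r_S$ (so that the square of fixed-point identifications commutes), and that the coproduct-over-chains formula for the homotopy colimit commutes with $(-)^{\Group}$ on the nose — this uses that $\Group$ is finite and acts freely on $S$, so that a $\Group$-fixed chain of objects/morphisms in $\I^S$ is the same as a chain in $\I^{S/\Group}$, with no stabilizer subtleties. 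Once that compatibility is in place, cofinality finishes the argument immediately.
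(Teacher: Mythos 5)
Your proposal follows the paper's proof exactly: reduce to the trivial-group case (which is right cofinality of $r_S$) by commuting $\Group$-fixed points past the coproduct-over-chains formula for the homotopy colimit and invoking the isomorphisms $\widetilde\I(S)^\Group\cong\widetilde\I(S/\Group)$ and $(\I^S)^\Group\cong\I^{S/\Group}$ from Corollary \ref{fixfix}. One small over-complication: the identification $(\hocolim_{\D}Y)^\Group\cong\hocolim_{\D^\Group}Y^\Group$ is an isomorphism of simplicial spaces that holds for any $\Group$-action on a small category (a $\Group$-fixed point of a coproduct sits in a $\Group$-fixed summand, and a $\Group$-fixed chain is exactly a chain in $\D^\Group$), so no freeness hypothesis is needed there — indeed the lemma as stated does not assume $\Group$ acts freely on $S$, and the paper does not use freeness in this step.
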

\begin{proof}
  First we treat the case
  where $\Group$ is the trivial group. We have seen that the functor 
  $r_{S} \colon \widetilde \I(S) \to \I^S$ is
  cofinal. Therefore the map
  \begin{displaymath}
    \hocolim[\widetilde \I(S)] Z \circ r_{S} \to \hocolim[\I^S] Z 
%= \hocolim[\I^S] Z \circ r_{S} \circ s_{S} \to   
  \end{displaymath}
  is an equivalence (right cofinality). 

  In the case where $\Group$ is nontrivial we note that there
  is an isomorphism
  \begin{displaymath}
    (\hocolim[\widetilde  \I (S)] \I \circ r_{S})^{\Group}
    \cong
    \hocolim[\widetilde  \I (S)^{\Group}] \I^{\Group} \circ r_{S}^{\Group}.
  \end{displaymath}
  Further, it is an immediate consequence of Corollary \ref{fixfix} that the
  categories ${\widetilde  \I (S)}^{\Group}$ and $\widetilde  \I (S/\Group)$
  are isomorphic.
  We can now easily reduce to the
  situation where $\Group$ is the trivial group.
\end{proof}

Given an $\ess$-module $A$ and a totally ordered finite set
$T = \{1,\dots,t\}$ we define the $\ess$-module $\bigwedge_{T} A$
inductively by letting $\bigwedge_{T} A = \ess$ for $t = 0$ and
$\bigwedge_{T} A = (\bigwedge_{T \setminus \{t\}} A) \wedge A$ for $t \ge
1$.
\begin{lemma}
\label{lem:Zissmash}
For every finite set $S$ the functor $A \mapsto \dT[S]A$ preserves
connectivity of maps of commutative $\ess$-algebras, and sends stable
equivalences to pointwise equivalences.
%stable equivalences.
If $A$ is cofibrant and $T$ is a finite totally ordered set then there
is a chain of stable equivalences 
between $\dT[T]A$ and 
$\bigwedge_T A$.   
\end{lemma}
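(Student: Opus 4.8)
The plan is to treat the two assertions separately, as the first is a soft consequence of the way $\dT[S](A)$ is built out of homotopy colimits and mapping spaces, while the second requires an honest identification of the homotopy type. For the first assertion, recall that $\dT[S](A) = \hocolim_{\widetilde\I(S)} G^A_S\circ r_S$, and that by Lemma~\ref{Bolemma} (applied with $\Group$ trivial) this is pointwise equivalent to $\hocolim_{\I^S} G^A_S$. A homotopy colimit over a fixed category preserves pointwise equivalences and connectivity estimates on the diagram, so it suffices to check the corresponding statements for the functor $i\mapsto G^A_S(i) = Map_*(\bigwedge_{s\in S}S^{i(s)},\bigwedge_{s\in S}A(S^{i(s)}))$. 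First I would observe that if $A\to A'$ is an $n$-connected map of commutative $\ess$-algebras, then $\bigwedge_{s\in S}A(S^{i(s)})\to\bigwedge_{s\in S}A'(S^{i(s)})$ is again (at least) $n$-connected by the usual connectivity of smash products, and mapping out of a finite complex $\bigwedge_s S^{i(s)}$ preserves this; passing to the homotopy colimit (a filtered-type colimit along $\I$, where the dimensions $i(s)$ grow) then yields an $n$-connected map on $\dT[S](A)$. Similarly, if $A\to A'$ is a stable equivalence of $\ess$-modules, then $G^A_S\to G^{A'}_S$ is a pointwise equivalence of functors on $\I^S$ --- this is precisely B\"okstedt's approximation lemma, since the $\I$-indexed homotopy colimit of $i\mapsto Map_*(S^i, A(S^i)\wedge(-))$ computes the derived smash and is insensitive to stable equivalence --- and hence induces a pointwise equivalence on homotopy colimits.

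For the second assertion, assume $A$ is cofibrant and $T=\{1,\dots,t\}$ is totally ordered. The strategy is to exhibit a zig-zag of stable equivalences $\dT[T](A)\simeq A\smsh\cdots\smsh A$ ($t$ factors) by comparing the homotopy-colimit model with the naive iterated-smash model. Concretely, I would unwind $\hocolim_{\I^T}G^A_T$: its value is, up to pointwise equivalence, the $\I^T$-indexed homotopy colimit of $i\mapsto Map_*(\bigwedge_{t\in T}S^{i(t)},\bigwedge_{t\in T}A(S^{i(t)}))$. Because $\I^T$ is a product of copies of $\I$ and the smash in question is a $T$-indexed smash that factors variable by variable, B\"okstedt's approximation lemma applied one coordinate at a time identifies this multi-relative homotopy colimit with the derived smash $A\smsh^{\bL}\cdots\smsh^{\bL}A$; since $A$ is cofibrant, the derived smash agrees with the point-set smash $\bigwedge_T A$ up to a chain of stable equivalences. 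The ordering of $T$ is used only to fix the inductive bracketing of $\bigwedge_T A$ as in the definition preceding the lemma; a different ordering gives a canonically equivalent spectrum.

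The main obstacle I expect is making the multi-variable B\"okstedt approximation argument precise, i.e.\ commuting the $\I^T=\I^{\times t}$-indexed homotopy colimit past the $T$-fold smash so as to reduce to one application of the approximation lemma per coordinate. One must check that the relevant diagram is ``objectwise good'' (so that homotopy colimits commute with smash up to stable equivalence), and that the cofibrancy of $A$ propagates to the intermediate terms $A(S^{i(t)})$ so that smashing is homotopically invariant at each stage. A clean way to organize this is to induct on $t$: the case $t\le 1$ is the definitional statement $\dT[{*}](A)\simeq A$ together with the unit $\dT[\emptyset](A)\simeq\ess$, and for the inductive step one writes $\I^T\cong\I^{T\setminus\{t\}}\times\I$, uses the Fubini theorem for homotopy colimits to pull out the last coordinate, applies the approximation lemma in that coordinate to replace $\hocolim_{\I}Map_*(S^{i(t)},A(S^{i(t)})\smsh(-))$ by $A\smsh(-)$, and then invokes the inductive hypothesis on the remaining $\I^{T\setminus\{t\}}$-homotopy colimit; the naturality built into our functorial model of homotopy colimits (Section~\ref{funhocolim}) is what makes these identifications assemble into an honest chain of stable equivalences rather than a mere abstract equivalence.
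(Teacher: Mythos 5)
For the first assertion you and the paper take the same route: reduce to the $\I^S$-indexed diagram by Lemma~\ref{Bolemma}, then invoke B\"okstedt's approximation lemma to control the homotopy colimit by a single high-connectivity term $G^A_S(i)$, from which both connectivity preservation and invariance under stable equivalence follow.

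For the second assertion your strategy is genuinely different from the paper's, and I think your version is missing the key technical device that makes the comparison work. The paper does not induct on $|T|$; it introduces an auxiliary functor $\widetilde G^A_T$ with
$\widetilde G^A_T(i)(K) = Map_*(\bigwedge_{t}S^{i(t)},(\bigwedge_T A)(K\smsh\bigwedge_t S^{i(t)}))$
--- that is, it replaces the \emph{pointwise} smash $\bigwedge_t A(S^{i(t)})$ appearing in $G^A_T$ by the honest $\Gamma$-space smash $\bigwedge_T A$ evaluated on a bigger sphere --- and then produces a single three-term zigzag
$\dT[T]A \gets \hocolim_{\I^T}G^A_T \to \hocolim_{\I^T}\widetilde G^A_T \gets \bigwedge_T A$,
citing Lydakis (\cite[Prop.~5.22]{LydGamma}) for the middle map being a stable equivalence when $A$ is cofibrant and Lemma~\ref{Bolemma} for the first. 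Your proposal instead tries to strip off one factor of $\I$ at a time via Fubini and a one-variable application of B\"okstedt's lemma. The difficulty is that after fixing $i'\in\I^{T\setminus\{t\}}$, the inner hocolim has the shape $\hocolim_{i(t)\in\I}Map_*(S^{i'}\smsh S^{i(t)},\,K\smsh A(S^{i'})\smsh A(S^{i(t)}))$, in which the sphere $S^{i'}$ from the remaining coordinates still sits in the source. B\"okstedt's lemma does not directly convert this into $Map_*(S^{i'},\,(\text{something})\smsh A)$ with the same outer variable structure you started with; you would still need to identify the pointwise smash $A(S^{i'})\smsh A$ with $(A\smsh A)(S^{i'})$-type data, which is exactly the comparison that Lydakis's result and the intermediate $\widetilde G^A_T$ are there to supply. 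Your phrase about ``cofibrancy of $A$ propagating to $A(S^{i(t)})$'' gestures at this, but as written the inductive step does not close; you would either have to reintroduce essentially the paper's intermediate functor or appeal to a smash-product comparison lemma for $\Gamma$-spaces at each stage, which amounts to the same thing. So for the second half I would call this a gap: the high-level plan is plausible, but the crucial identification of the homotopy colimit with $\bigwedge_T A$ is not actually carried out, and the mechanism the paper uses to carry it out (the passage through $\widetilde G^A_T$ and \cite[Prop.~5.22]{LydGamma}) is absent from your sketch.
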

\begin{proof}
%\marginpar{need reference}
  The first statement follows from B\"okstedt's Lemma 
(see e.g. \cite[Lemma  2.5.1]{MR2001e:16014})
  since, for given $n$, there is an
  $i\in\I^S$ for which %.... and so 
  $$Map_*(\bigwedge_{s\in S}S^{i(s)},K \smsh\bigwedge_{s\in
    S}A(S^{i(s)}))=G^A_S(K)(i)\to \TS[S](A)(K)$$ 
  is $n$-connected.  But as $n$ increases and $K$ varies, the term to
  the left is just a concrete model for the (derived) $S$-fold smash of
  $A$ with itself.   
%%%I do not see that we need to argue that it takes stable
%    equivalences to pointwise equivalences, given that we have shown
%    that it preserves connectivity, and obviously is an $\Omega$
%    spectrum, but if you thik so, we may include the below.
%For the second statement, we see that it is enough to show that $\Lambda_S$ sends the
%  stable equivalence $A\to QA=\{X\mapsto \colim_k\Omega^kA(S^k\smsh
%  X)\}$ to a pointwise equivalence.  But the map $A(X)\mapsto QA(X)$
%  is approximately as connected as $X$ is, and so the map $G^A_S(i)\to
%  G^{QA}_S(i)$ will be approximately $\sum_{s\in S}|i(s)|$ connected.
%  Hence, the lemma plus B\"okstedt's lemma once again give the result.

%   For the second statement, choose a total order on every finite
%   set. 
%   Let $\widetilde G^A$ denote the op-lax 
%   natural transformation from the 
%   op-lax functor $S \mapsto \I^S$ to  the constant functor $TFin\to Cat$
%   sending every totally ordered finite set to $\gs$ described as
%   follows.  For a totally ordered finite set
%   $T$ let
  Let $\widetilde G^A_T\colon \I^T\to\gs$
  be the functor which to the object $i\in\I^T$ assigns the
  $\Gamma$-space
  $$K \mapsto Map_*(\bigwedge_{t\in
    T}S^{i(t)},(\bigwedge_T A)(K \wedge \bigwedge_{t\in T} S^{i(t)})),$$ and with
  structure maps similar to those of $G^A_T$.  
  There are natural maps
  \begin{displaymath} \dT[T]A(K) \gets \hocolim[i \in \I^T] G^A_T(i)(K) \to
  \widetilde {\dT[T]}A(K) := \hocolim[i \in \I^T] \widetilde
  G^A_T(i)(K) \gets (\bigwedge_T A)(K). 
  \end{displaymath}
  The first map is a weak equivalence by Lemma \ref{Bolemma}, the third map
  is always a stable equivalence, and the second map is 
  a stable equivalence if $A$ is cofibrant \cite[Proposition 5.22]{LydGamma}. 
\end{proof}

The smash product of $\ess$-modules is the coproduct in the category
of commutative $\ess$-algebras, and so the category of commutative
$\ess$-algebras is ``tensored'' over totally ordered finite sets through the formula 
$$T \otimes A=\bigwedge_T A.$$ 
Let us choose a total order on every finite set. Using the universal
property of the coproduct we see that $S \mapsto S \otimes A$ is a
functor from the category of finite sets to the category of
commutative $\ess$-algebras. 
The following corollary of the
proof of Lemma \ref{lem:Zissmash} implies that up to homotopy
$\dT[S](A)$ is a commutative $\ess$-algebra.

\begin{cor}
 If $A$ is a commutative $\ess$-algebra which is cofibrant as
 an $\ess$-module, then $\TS[S](A)$ is stably equivalent as an $\ess$-module
 to  the commutative $\ess$-algebra $S \otimes A$
%=\smsh^S A$ 
via maps
 that are natural in the finite set $S$.
\end{cor}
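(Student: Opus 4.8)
The plan is to recognize this as a naturality refinement of the chain of stable equivalences produced in the proof of Lemma~\ref{lem:Zissmash}, and to rerun that argument functorially in $S\in Fin$. Having fixed a total order on every finite set, the universal property of the smash product (the coproduct in commutative $\ess$-algebras) makes $S\mapsto S\otimes A=\bigwedge_S A$ a functor from $Fin$ to commutative $\ess$-algebras, hence a functor $Fin\to\gs$; this is the target term of the chain, while the source term $\TS[S](A)=\hocolim[\widetilde\I(S)]G^A_S\circ r_S$ is already a functor on $Fin$ by construction. So what has to be supplied is an intermediate functor $Fin\to\gs$ together with the natural comparison maps making up the zig-zag
\begin{displaymath}
\TS[S](A)\ew\hocolim[\I^S]G^A_S\we\widetilde{\TS[S]}A\ew S\otimes A
\end{displaymath}
of the proof of Lemma~\ref{lem:Zissmash}, now exhibited as a chain of natural transformations of functors on $Fin$.

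First I would construct the intermediate term by repeating Section~\ref{thelont} and the definition of $\TS(A)$ with $G^A$ replaced by a transformation $\widetilde{G^A}$ from the weak functor $\I\colon Fin\to\Cat$ to the constant functor with value $\gs$, whose component $\widetilde{G^A}_S$ sends $i\in\I^S$ to the $\Gamma$-space $K\mapsto Map_*\bigl(\bigwedge_{s\in S}S^{i(s)},(S\otimes A)(K\wedge\bigwedge_{s\in S}S^{i(s)})\bigr)$ and whose structure $2$-cell $\widetilde{G^A}_\phi\colon\widetilde{G^A}_S\Rightarrow\widetilde{G^A}_{S'}\circ\I(\phi)$, for $\phi\colon S\to S'$ in $Fin$, is induced by $\phi\otimes A\colon S\otimes A\to S'\otimes A$ together with the reindexing isomorphism $\bigwedge_{s'\in S'}S^{\coprod_{s\in\phi^{-1}(s')}i(s)}\cong\bigwedge_{s\in S}S^{i(s)}$. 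Then $\widetilde{\TS[S]}A:=\hocolim[\widetilde\I(S)]\widetilde{G^A}_S\circ r_S$ is a functor $Fin\to\gs$ which, on a finite set $T$ with its chosen order, reduces to the $\widetilde{\dT[T]}A$ of Lemma~\ref{lem:Zissmash}. Note that, unlike $G^A_\phi$, the map $\widetilde{G^A}_\phi$ invokes no multiplication: the commutative $\ess$-algebra structure is packaged once and for all into $S\otimes A$, and the coherence of $\widetilde{G^A}$ reduces to the functoriality of $(-)\otimes A$.

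The three comparison maps are then all instances of facts already available. The outer maps $\TS[S](A)\ew\hocolim[\I^S]G^A_S$ and $\widetilde{\TS[S]}A\ew\hocolim[\I^S]\widetilde{G^A}_S$ are the natural equivalences of Lemma~\ref{Bolemma} for the trivial group (right cofinality of the $r_S$), natural in $S$ by the functoriality of homotopy colimits recalled in Section~\ref{funhocolim}. The middle map $\hocolim[\I^S]G^A_S\to\hocolim[\I^S]\widetilde{G^A}_S$ is induced by the morphism of transformations $G^A\Rightarrow\widetilde{G^A}$ given on $i\in\I^S$ by post-composition with the structure map $\bigwedge_{s\in S}A(S^{i(s)})\to(S\otimes A)(\bigwedge_{s\in S}S^{i(s)})$ of the commutative $\ess$-algebra $S\otimes A$; it is a stable equivalence when $A$ is cofibrant, by the same argument (\cite[Proposition 5.22]{LydGamma}) that handles the corresponding map in the proof of Lemma~\ref{lem:Zissmash}. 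Finally, the object $0_S\in\I^S$ with $0_S(s)=\emptyset$ for all $s\in S$ satisfies $\widetilde{G^A}_S(0_S)\cong S\otimes A$ and is carried to $0_{S'}$ by every $\I(\phi)$, so the canonical map $S\otimes A\to\widetilde{\TS[S]}A$ into the homotopy colimit is natural in $S$, and it is a stable equivalence by the connectivity (B\"okstedt's lemma) argument already used for the ``third map'' in the proof of Lemma~\ref{lem:Zissmash}. Splicing these natural maps yields the asserted natural chain of stable equivalences of $\ess$-modules.

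The only genuine work is the bookkeeping of the second paragraph: checking that $\widetilde{G^A}$ is a transformation of weak functors over $Fin$ and that $G^A\Rightarrow\widetilde{G^A}$ is a morphism of such transformations, compatibly with both the $\I$-variance and the $Fin$-variance. This is the same sort of diagram chase as for $G^A$ in Section~\ref{thelont}, and in fact it is easier, because we remain over $Fin$ (so the obstruction in the Note of Section~\ref{thelont} never intervenes) and because all the required coherence is absorbed into the universal property defining $S\otimes A$.
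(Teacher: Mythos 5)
Your proposal is correct and reconstructs exactly what the paper leaves implicit when it declares this a ``corollary of the proof of Lemma~\ref{lem:Zissmash}'': you must redo the zig-zag from that proof so that every term is a functor on $Fin$ and every comparison map is a natural transformation, and the only genuinely new input is that $\widetilde{G^A}$ assembles into a transformation over $Fin$, with structure $2$-cells supplied by $\phi\otimes A\colon S\otimes A\to S'\otimes A$ and the reindexing isomorphisms. Your identification of the natural inclusion $S\otimes A\to\widetilde{\TS[S]}A$ via the $\I(\phi)$-invariant object $0_S\in\I^S$ with $0_S(s)=\emptyset$ is also the right way to see the last leg naturally.

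One small point worth tightening. Your displayed zig-zag routes through $\hocolim[\I^S]G^A_S$ and $\hocolim[\I^S]\widetilde{G^A}_S$, and you invoke Section~\ref{funhocolim} for naturality in $S$. But $\I\colon Fin\to\Cat$ is only a weak functor, whereas the homotopy-colimit functoriality of Section~\ref{funhocolim} is set up for strict functors and strict transformations; this is precisely why the paper rectifies to $\widetilde\I$ in the first place. So the $\hocolim[\I^S]$-terms are not literally objects of $[Fin,\gs]$, and ``natural in $S$'' there should really be read as ``natural up to the coherent $2$-cells of the weak functor $\I$.'' Since you ultimately introduce $\widetilde{\TS[S]}A:=\hocolim[\widetilde\I(S)]\widetilde{G^A}_S\circ r_S$, which \emph{is} strictly functorial, the cleaner route is to bypass $\hocolim[\I^S]$ entirely and use the two-step chain
\begin{displaymath}
\dT[S]A=\hocolim[\widetilde\I(S)]G^A_S\circ r_S\;\longrightarrow\;\hocolim[\widetilde\I(S)]\widetilde{G^A}_S\circ r_S\;\longleftarrow\;S\otimes A,
\end{displaymath}
where the first map is induced by your morphism of transformations $G^A\Rightarrow\widetilde{G^A}$ (a stable equivalence when $A$ is cofibrant, by the same Lydakis/B\"okstedt argument) and the second is the inclusion at $(id_S,0_S)\in\widetilde\I(S)$. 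This sidesteps the strictness issue and shortens the chain. With that adjustment the argument is airtight; as written it matches the paper's own level of rigor on this point, since the paper's Lemma~\ref{lem:Zissmash} proof also passes through the $\I^T$-hocolim without comment.
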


\begin{cor}
  Given an $\ess$-module $A$ and finite sets $R$ and $S$ there is an  equivalence
$$\TS[R\times S](A)\simeq \TS[R](\TS[S](A)).$$
\end{cor}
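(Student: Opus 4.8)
The plan is to reduce the statement to the identification $\TS[T](A)\simeq\bigwedge_T A$ of Lemma~\ref{lem:Zissmash} together with associativity of the smash product of $\ess$-modules. First I would reduce to the case that $A$ is cofibrant: for a fixed finite set the Loday functor does not use the ring structure, and by Lemma~\ref{lem:Zissmash} the functor $B\mapsto\TS[S](B)$ carries stable equivalences of $\ess$-modules to pointwise equivalences; hence so do $B\mapsto\TS[R\times S](B)$ and the composite $B\mapsto\TS[R](\TS[S](B))$. Thus a cofibrant replacement $A^{c}\we A$ changes neither side of the asserted equivalence up to weak equivalence, and we may assume $A$ cofibrant.

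Next I would unwind both sides. Choosing total orders on $R$ and $S$ and giving $R\times S$ the lexicographic order, the second part of Lemma~\ref{lem:Zissmash} supplies chains of stable equivalences of $\ess$-modules $\TS[S](A)\simeq\bigwedge_S A$ and $\TS[R\times S](A)\simeq\bigwedge_{R\times S}A$. Since $A$ is cofibrant, the iterated smash product $\bigwedge_S A$ is again a cofibrant $\ess$-module (the pushout--product axiom for the smash product, the unit $\ess$ being cofibrant), so Lemma~\ref{lem:Zissmash} applies once more, now with $\bigwedge_S A$ in the role of $A$ and $R$ in the role of $T$, giving $\TS[R](\bigwedge_S A)\simeq\bigwedge_R\bigl(\bigwedge_S A\bigr)$. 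Applying the homotopy functor $\TS[R](-)$ to the chain $\TS[S](A)\simeq\bigwedge_S A$ just produced then yields $\TS[R](\TS[S](A))\simeq\TS[R](\bigwedge_S A)\simeq\bigwedge_R\bigl(\bigwedge_S A\bigr)$.

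To finish, I would invoke the associativity and unit coherence isomorphisms of the smash product of $\ess$-modules to get a canonical isomorphism $\bigwedge_R\bigl(\bigwedge_S A\bigr)\cong\bigwedge_{R\times S}A$; with the orders chosen above, the lexicographic order on $R\times S$ is exactly the one obtained by nesting the order on $S$ inside that on $R$, so the bookkeeping matches. Concatenating the chains of equivalences above then gives $\TS[R](\TS[S](A))\simeq\bigwedge_R\bigwedge_S A\cong\bigwedge_{R\times S}A\simeq\TS[R\times S](A)$, as required.

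The routine part is the bookkeeping with total orders in the last step. The point that needs the most care is the interface between the first two steps: one must know that $\TS[R](-)$ is a genuine homotopy functor on $\ess$-modules, so that it transports the zig-zag $\TS[S](A)\simeq\bigwedge_S A$ to a zig-zag of equivalences with no further cofibrant replacement, and that $\bigwedge_S A$ is cofibrant whenever $A$ is, so that Lemma~\ref{lem:Zissmash} may legitimately be re-applied to it. Both facts are available --- the former is part of how $\TS$ is constructed (no cofibrant replacements needed), the latter is the monoidal model structure on $\ess$-modules --- so I expect no serious obstacle beyond organizing these compatibilities.
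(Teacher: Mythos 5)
Your proof is correct, and it fills in the argument the paper leaves implicit (the corollary is stated without proof, following Lemma~\ref{lem:Zissmash} and the preceding corollary): reduce to cofibrant $A$ using that $\TS[T](-)$ sends stable equivalences to pointwise equivalences, identify each side with iterated smash powers via the second part of Lemma~\ref{lem:Zissmash}, and conclude by associativity of $\smsh$. One small imprecision worth flagging: the first clause of Lemma~\ref{lem:Zissmash} as stated refers to maps of commutative $\ess$-algebras, whereas you need the stable-equivalence-preservation for arbitrary $\ess$-module maps when reducing to the cofibrant case and when transporting the zig-zag $\TS[S](A)\simeq\bigwedge_SA$ through $\TS[R](-)$; this broader version is what the proof of the lemma (B\"okstedt's Lemma) actually establishes, since the argument never uses the ring structure for a fixed finite set, so the gap is only in the bookkeeping of the citation, not in the mathematics.
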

% \begin{proof}First display a map (DO!). 
%   Enough to prove it for finite sets $X$ and $Y$.  But then we get the desired result since as $\ess$-modules $\ddT_YA\simeq {}^L\smsh^YA$ and ${}^L\smsh^X({}^L\smsh^YA)\simeq {}^L\smsh^{X\times Y}A$ (this has to be spelled out I guess).
%\end{proof}

\subsection{Multiplicative structure}
One disadvantage with B\"okstedt's formulation of $THH(A)$ for a
commutative $\ess$-algebra $A$ is that the
multiplicative structure needs some elaboration.  In our model,
$\Lambda_XA$ will automatically be an $\ess$-algebra without any amendments, and it is
appropriately functorial in $X$ and $A$. In particular, our fattening
up of B\"okstedt's model could be viewed as a way of getting the
multiplicative structure in one sweep (though, if one did not care about
the equivariant structure to come, one would drop the complicating
spans, and work with functors from $Fin$ only).  That said, our model is not
strictly commutative, so there is still some advantage to the
categorical smash product constructions in this regard. 

\begin{lemma}\label{Iismonoidal}
  The coproduct in $\I$ gives a %lax natural
  transformation with $2$-cells consisting of isomorphisms $\mu\colon (\I\times\I)\circ\Delta\Rightarrow\I$
  of lax functors
  $V\to Cat$, where $\Delta\colon V\to V\times V$ is the diagonal. 
\end{lemma}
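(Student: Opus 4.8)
The plan is to write the transformation down explicitly and then reduce every coherence requirement to the universal property of finite coproducts. First I would fix, once and for all, a functorial binary coproduct $\sqcup$ on the skeleton $\I$ (for instance $\{1,\dots,m\}\sqcup\{1,\dots,n\}=\{1,\dots,m+n\}$ with the evident injections), so that $(\I,\sqcup)$ becomes a symmetric monoidal category in the usual way. A finite set $S$ is sent by $(\I\times\I)\circ\Delta$ to $\I^S\times\I^S$ and by $\I$ to $\I^S$, and I take the $1$-cell component of $\mu$ to be the pointwise fold functor $\mu_S\colon\I^S\times\I^S\to\I^S$, given on objects by $\mu_S(i,i')(s)=i(s)\sqcup i'(s)$ and on morphisms by the coproduct of injections. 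Since $\I(S)=\I^S$ is a product category, its coproducts are computed pointwise, so $\mu_S$ is nothing but the binary coproduct functor of $\I(S)$.

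Next, for a $1$-cell $f=[X\gets A\to Y]$ of $V$ — which after applying $L_0$ I may take to be a relation $A\subseteq X\times Y$, so that $\I(f)(c)(y)\cong\coprod_{x\colon (x,y)\in A}c(x)$ — I would let the component $2$-cell $\mu_f\colon\I(f)\circ\mu_X\Rightarrow\mu_Y\circ(\I(f)\times\I(f))$ be the canonical shuffle isomorphism $\coprod_{(x,y)\in A}\bigl(c(x)\sqcup d(x)\bigr)\xrightarrow{\cong}\bigl(\coprod_{(x,y)\in A}c(x)\bigr)\sqcup\bigl(\coprod_{(x,y)\in A}d(x)\bigr)$, which exists and is invertible because coproducts commute with coproducts; equivalently, since $\I(f)$ is a pointwise coproduct over fibers it preserves coproducts in each variable, hence commutes with $\mu_X$ up to a canonical isomorphism. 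Naturality in the pair $(c,d)$ is immediate, and invertibility is exactly the ``$2$-cells consisting of isomorphisms'' clause of the statement.

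It then remains to check the two axioms making $(\mu_S,\mu_f)$ a transformation of lax functors $V\to\Cat$: compatibility with composition of $1$-cells and with identity $1$-cells. The third axiom, compatibility with $2$-cells of the source bicategory, is vacuous because $V$ carries only identity $2$-cells. For the composition axiom, given $f\colon X\to Y$ and $g\colon Y\to Z$ in $V$, one pastes $\mu_f$ with $\mu_g$ and compares with $\mu_{g\circ f}$, using the laxity $2$-cells of $\I\colon V\to\Cat$ — which, by Definitions \ref{defineendoL} and \ref{definepseudo} and the description of $\Rel$, are assembled from the surjection $(x,y,z)\mapsto(x,z)$ and from canonical coproduct-reindexing isomorphisms — together with $\phi_{g,f}\times\phi_{g,f}$ on the source side. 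Both composites are isomorphisms between the two evident ways of forming $\coprod(c\sqcup d)$ indexed over composable pairs, and both are built entirely from coherence isomorphisms of $(\I,\sqcup)$ and from reindexings of coproducts, so they agree by Mac Lane's coherence theorem (equivalently, by the uniqueness part of the universal property of iterated coproducts). The identity axiom is handled the same way, comparing $\mu_{\id_S}$ with the unit laxity cells. The only real obstacle is the bookkeeping: one has to confirm that the laxity cells of $\I$ on $V$ really are canonical coproduct-reindexings, so that every coherence diagram in play consists of canonical isomorphisms and reindexings; granting that, there is no content beyond commuting coproducts past coproducts.
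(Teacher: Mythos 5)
Your proposal is correct and takes essentially the same approach as the paper: the paper likewise defines $\mu_S$ pointwise via the coproduct in $\I$ and takes the component $2$-cell $\mu_f$ to be the canonical isomorphism exchanging $\coprod_{a\in(Lg)^{-1}(y)}\bigl(i(Lf(a))\sqcup j(Lf(a))\bigr)$ with $\bigl(\coprod_a i(Lf(a))\bigr)\sqcup\bigl(\coprod_a j(Lf(a))\bigr)$. The paper stops after exhibiting the $2$-cells, leaving the transformation axioms implicit, whereas you sketch the coherence verification — correctly reducing it to the universal property of iterated coproducts, though note that the laxity cells of $\I\colon V\to\Cat$ involve non-invertible fold maps, so the uniqueness argument via the universal property is the right justification rather than Mac Lane coherence per se.
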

\begin{proof}
  We build the $2$-cells consisting of isomorphism from the
  natural isomorphisms we get in  $$\begin{CD}
    \I^X\times\I^X@>>>\I^X\\@VVV@VVV\\\I^Y\times\I^Y@>>>\I^Y
  \end{CD},$$  where the vertical maps are induced by
  $[\xymatrix{X&A\ar[l]_f\ar[r]^g&Y}]\in V$ and the horizontal maps by
  the coproduct in $\I$,
simply (when evaluated on $(i,j)\in\I^X\times\I^X$ and $y\in Y$) by the coherence isomorphism that permutes $\coprod_{a\in
  (Lg)^{-1}(y)}i(Lf(a))\sqcup \coprod_{a\in
  (Lg)^{-1}(y)}j(Lf(a))$ to $\coprod_{a\in
  (Lg)^{-1}(y)}(i(Lf(a))\sqcup j(Lf(a)))$.
\end{proof}

Even more simply, we obtain a natural
  transformation %with $2$-cells consisting of isomorphisms 
$\tilde\mu\colon
  (\tilde\I\times\tilde\I)\circ\Delta\Rightarrow\tilde\I$: if $X\in ob
  V$ then $\tilde\mu_X\colon\tilde\I(X)\times\tilde\I(X)\to\tilde\I(X)$ is given by
$$\left((f_1\colon T_1\to X,i_1\in\I^{T_1}),(f_2\colon T_2\to
  X,i_2\in\I^{T_2})\right)
\quad\mapsto\quad
(f_1+f_2\colon T_1\sqcup T_2\to X, (i_1,i_2)\in\I^{T_1\sqcup T_2})
$$ (here $f_1$ and $f_2$ are maps in $V$, and we have identified
$\I^{T_1\sqcup T_2}$ with $\I^{T_1}\times\I^{T_2}$ for the purpose of
naming elements).  Since we have chosen our category $\I$ of
finite sets and injective functions so that the monoidal structure is strictly associative and unital, we
see that so is $\tilde\mu$.

Note that $X\mapsto\tilde\I(X)\times\tilde\I(X)$ is not the
rectification of $X\mapsto\I(X)\times\I(X)$, so the comparison between
these is not the formal one.

However, there is an invertible modification
$M\colon r\circ\tilde\mu\Rrightarrow\mu\circ(r\times r)$ defined as follows.
If $X\in V$ we let $M_X\colon r_X\circ\tilde\mu_X\Rightarrow
\mu_X\circ(r_X\times r_X)$ be the transformation %lax natural
                                %isomorphism 
which, when
applied to $\left(([g^1_*f_1^*],i_1)([g^2_*f_2^*],i_2)\right)\in
\tilde\I(X)\times\tilde\I(X)$, is provided by the canonical isomorphism between 
$L([g^1_*f_1^*]+[g^2_*f_2^*])(i_1,i_2)=\left\{x\mapsto \coprod_{(s,x)\in
  L(A_1\sqcup A_2)}(i_1,i_2)(s)\right\}$ and $x\mapsto\left(\coprod_{(t_1,x)\in LA_1}i_1(t_1)\right)\sqcup\left(\coprod_{(t_2,x)\in
  LA_2}i_2(t_2)\right)$.

\begin{lemma}\label{multmod}Let $G^A\smsh G^A$ be the %left lax
                                %natural 
transformation
  from $(\I\times\I)\circ\Delta$ to $\gs$ which evaluated on
  $(i,j)\in\I^S\times\I^S$ is given by $G^A_S(i)\smsh G^A_S(j)$.
  Multiplication in $A$ defines a modification $\mu^A$ from $G^A\smsh G^A$ to
  the composite
$$
\xymatrix{
Fin\ar@{=}[r]\ar[d]&Fin\ar[d]\ar@{=}[r]&Fin\ar[dd]^{\gs}\\
V\ar@{=}[r]\ar[d]_{(\I\times \I)\circ
  \Delta}\ar@{}[dr]|-{\overset{}{\Rightarrow}}
&V\ar[d]^{ \I }\ar@{}[r]^{G^A}_{\Rightarrow}&\\
Cat\ar@{=}[r]&Cat\ar@{=}[r]&Cat},
$$
where the transformation in the lower left square comes from Lemma \ref{Iismonoidal}.
\end{lemma}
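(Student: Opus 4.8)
The plan is to exhibit $\mu^A$ explicitly and then verify the modification axiom. Recall that a modification between two transformations of (weak) functors assigns, to each object $S \in Fin$, a $2$-cell (natural transformation) between the two component functors over $S$, subject to a compatibility with the transformation $1$-cells along each morphism of $Fin$. So first I would fix a finite set $S$ and define $\mu^A_S$ to be the natural transformation from $G^A_S(i)\smsh G^A_S(j)$ to $G^A_S(i\sqcup_{\mu} j)$ (the value of the composite $G^A$ applied to the coproduct object produced by Lemma \ref{Iismonoidal}) whose component at $(i,j)\in\I^S\times\I^S$ is the map of $\Gamma$-spaces
\begin{displaymath}
Map_*\bigl(S(i),A(i)\bigr)\smsh Map_*\bigl(S(j),A(j)\bigr)\to Map_*\bigl(S(i)\smsh S(j), A(i)\smsh A(j)\bigr)\to Map_*\bigl(S(i\sqcup j),A(i\sqcup j)\bigr),
\end{displaymath}
where the first map is the evident smash-of-mapping-spaces pairing, and the second is induced by the strong monoidal structure isomorphism $S(i)\smsh S(j)\cong S(i\sqcup j)$ together with the multiplication map $A(i)\smsh A(j)\to A(i\sqcup j)$ coming from the transformation $A$ (i.e.\ ultimately from the multiplication in $A$). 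One checks naturality in $(i,j)\in\I^S\times\I^S$ directly: the structure maps of $G^A_S$ are suspensions followed by the $\ess$-algebra structure maps of $A$, and the pairing above is compatible with suspension (smash of spheres) and with the multiplicativity of the $A(S^n)$ structure maps, which is exactly the statement that $A$ is a commutative $\ess$-algebra.

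Next I would verify the modification condition with respect to morphisms of $Fin$. Given $\phi\colon S\to S'$ in $Fin$, the transformations $G^A$ and $G^A\smsh G^A$ have structure $2$-cells $G^A_\phi$ and $(G^A\smsh G^A)_\phi$ (the latter built from $G^A_\phi$ on each smash factor), and the functors $\I$ and $(\I\times\I)\circ\Delta$ carry $\phi$ to $\I(\phi)$ and $\I(\phi)\times\I(\phi)$ respectively, compatibly with the monoidality $2$-cell $\mu$ of Lemma \ref{Iismonoidal}. I must check that the two ways of going around the resulting prism agree: first apply $\mu^A_S$ then push along $\phi$, versus first push along $\phi$ (on both factors) then apply $\mu^A_{S'}$, all after inserting the coherence cell $\mu$. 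Unwinding, this comes down to the commutativity of a diagram of mapping spaces in which the outer structure maps reorganize iterated smash products $\bigwedge_{s'\in S'}\bigwedge_{s\in\phi^{-1}(s')}S^{i(s)}$, and the inner maps are the pairing together with multiplication in $A$; the identity holds because the monoidal structure on $S$ and the multiplication on $A$ are both associative and coherent, and the relevant coherence isomorphisms of $\I$'s coproduct are exactly those appearing in $\mu$. The unitality part (on $\id_S$) is immediate since $G^A_{\id_S}=\id$ and $\mu$ restricts to the identity there.

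Finally I would note that the whole construction is compatible with Street's rectification and the homotopy colimit, so that $\mu^A$ descends to the level of $\TS(A)$; but strictly this is not required for the statement of the lemma, which only asserts the existence of the modification $\mu^A$ before rectification. The main obstacle I anticipate is purely bookkeeping: the coherence diagram in the second step involves several nested reindexings of smash powers of spheres and of copies of $A$, and one must be careful that the isomorphisms chosen (the strong monoidal constraints of $S$, the associativity constraints of $\smsh$, and the coproduct coherence of $\I$) are precisely the ones that make the square commute strictly rather than only up to a further coherence cell. Since $\I$ was chosen with a strictly associative and unital monoidal structure, and $A$ is a strictly commutative $\ess$-algebra, these constraints collapse enough that the diagram does commute on the nose; spelling this out is the technical heart of the verification, but it is routine given the setup of Section \ref{sec:encodespans} and the definition of $G^A$.
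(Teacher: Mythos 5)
Your proposal is correct and follows essentially the same approach as the paper: you exhibit $\mu^A_S(i,j)$ as the composite of the smash pairing, the canonical rearrangement isomorphisms, and the multiplication in $A$, then verify naturality in $(i,j)$ and the modification axiom for $\phi\colon S\to S'$ by reducing to coherence of the smash rearrangements and associativity of the multiplication. This matches the paper's explicit three-step composite and its verification via the two commutative diagrams.
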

\begin{proof}
  Explicitly, on objects $i,j\in\I^S$, the natural transformation
  $\mu^A_S(i,j)\colon G^A_S(i)\smsh G_S^A(j)\to G^A_S(i\sqcup j)$ is given by the composite 
$$
  \begin{CD}
    Map_*(\bigwedge_{s\in S}S^{i(s)},\bigwedge_{s\in S}A(S^{i(s)}))
\smsh
Map_*(\bigwedge_{s\in S}S^{j(s)},\bigwedge_{s\in S}A(S^{j(s)}))\\
@VVV\\
Map_*\left((\bigwedge_{s\in S}S^{i(s)})\smsh(\bigwedge_{s\in
  S}S^{j(s)}),(\bigwedge_{s\in S}A(S^{i(s)}))\smsh(\bigwedge_{s\in
  S}A(S^{j(s)}))\right)\\
@VV{\cong}V\\
Map_*\left(\bigwedge_{s\in S}(S^{i(s)}\smsh S^{j(s)}),\bigwedge_{s\in
  S}(A(S^{i(s)})\smsh A(S^{j(s)}))\right)\\
@VVV\\
Map_*\left(\bigwedge_{s\in S}(S^{i(s)\sqcup j(s)}),\bigwedge_{s\in
    S}(A(S^{i(s)\sqcup j(s)})\right)
  \end{CD}
$$
where the first map smashes maps together, the second uses the
canonical rearrangements and the third uses the multiplication in
$A$.  As one checks, $\mu^A_S$ is a natural transformation, since
given inclusions $i\subseteq i'$ and $j\subseteq j'$ in $\I^S$ the
diagram
$$
\begin{CD}
  G^A_S(i)\smsh G_S^A(j)@>{\mu^A_S(i,j)}>> G^A_S(i\sqcup j)\\
  @VVV@VVV\\
  G^A_S(i')\smsh G_S^A(j')@>{\mu^A_S(i',j')}>> G^A_S(i'\sqcup j')
\end{CD}
$$
commutes.

Checking that this defines a modification involves proving
naturality in maps $f\colon S\to T\in Fin$, and basically boils down
to the fact that diagrams like
$$\xymatrix{
(\bigwedge_{s\in S}A(S^{i(s)}))\smsh(\bigwedge_{s\in S}A(S^{j(s)}))
\ar[r]^{\cong}\ar[d]^{\cong}&
{\bigwedge_{s\in S}(A(S^{i(s)})\smsh A(S^{j(s)}))}\ar[d]\\
(\bigwedge_{t\in T}\bigwedge_{s\in
  f^{-1}(t)}A(S^{i(s)}))\smsh(\bigwedge_{t\in T}\bigwedge_{s\in
  f^{-1}(t)}A(S^{j(s)}))\ar[d]&
{\bigwedge_{s\in S}A(S^{i(s)\sqcup j(s)})}\ar[d]^{\cong}\\
(\bigwedge_{t\in T}A(S^{f_*i(t)})\smsh(\bigwedge_{t\in
  T}A(S^{f_*j(t)})\ar[d]^{\cong}&
{\bigwedge_{t\in T}\bigwedge_{s\in f^{-1}(t)}A(S^{i(s)\sqcup j(s)})}\ar[dd]\\
{\bigwedge_{t\in T}A(S^{f_*i(t)}\smsh A(S^{f_*j(t)})}\ar[d]&\\
{\bigwedge_{t\in T}A(S^{f_*i(t)\sqcup f_*j(t)})}\ar[r]^{\cong}&
{\bigwedge_{t\in T}A(S^{f_*(i\sqcup j)(t)})}
}
$$
commute, where the marked isomorphisms are the obvious rearrangements,
and the nonmarked arrows are multiplication.
\end{proof}

\begin{cor}
  Multiplication in $A$ and coherence in $\I$ gives a modification 
$$\xymatrix{
(\tilde\I\times\tilde\I)\circ\Delta\ar@{=>}[r]^{(r\times
      r)}\ar@{=>}[d]^{\tilde\mu}\ar@{}[dr]|{\overset{M}{\Lleftarrow}}&
(\I\times\I)\circ\Delta\ar@{}[drr]|{\overset{\mu^A}{\Lleftarrow}}\ar@{=>}[rr]^{G^A\smsh G^A}\ar@{=>}[d]^{\mu}&&
\gs\ar@{=}[d]\\
\tilde\I\ar@{=>}[r]^r&{\I}\ar@{=>}[rr]_{G^A}&&\gs
}
$$
of %lax natural 
transformations of lax functors $Fin\to Cat$.
Consequently we get a natural multiplication map
$$\mu^A_S\colon\Lambda_SA\smsh \Lambda_SA\to \Lambda_SA$$
by using the functoriality of homotopy colimits 
$$(\tilde\mu_S,M_S\mu^A_S)\colon\hocolim[\tilde\I(S)\times\tilde\I(S)]G^A_Sr_S\smsh G_S^Ar_S\to
\hocolim[\tilde\I(S)]G_S^Ar_S$$
plus the obvious natural transformation
$$\left(\hocolim[\tilde\I(S)]G_S^Ar_S\right)\smsh\left(\hocolim[\tilde\I(S)]G_S^Ar_S\right)\to
\hocolim[\tilde\I(S)\times\tilde\I(S)]G_S^Ar_S\smsh G_S^Ar_S
$$
\end{cor}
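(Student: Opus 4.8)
The plan is to assemble the modification claimed in the corollary from the two modifications already constructed, namely $M\colon r\circ\tilde\mu\Rrightarrow\mu\circ(r\times r)$ (built just before Lemma~\ref{multmod}) and $\mu^A\colon G^A\smsh G^A\Rrightarrow G^A\circ\mu$ (Lemma~\ref{multmod}), by pasting them along the middle $1$-cell $\mu\circ(r\times r)$. First I would recall that modifications compose vertically (whiskered by the relevant transformations): given $M$ with target $\mu\circ(r\times r)$ and given $\mu^A$ whose source, after whiskering by $\mu$, is $G^A\circ\mu\circ(r\times r)$, one whiskers $M$ on the left by $G^A$ to obtain a modification $G^A M\colon G^A\circ r\circ\tilde\mu\Rrightarrow G^A\circ\mu\circ(r\times r)$, and whiskers $\mu^A$ on the right by $(r\times r)$ to obtain $\mu^A(r\times r)\colon (G^A\smsh G^A)\circ(r\times r)\Rrightarrow G^A\circ\mu\circ(r\times r)$; the desired modification is then a vertical composite in the $2$-category of lax functors $Fin\to Cat$, transformations, and modifications. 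Concretely, on a finite set $S$ and a pair $(i_1,i_2)$ of objects this amounts to composing the component natural transformations $M_S$ and $(\mu^A_S)$ in the order dictated by the pasting diagram, which is exactly the picture drawn in the statement.

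The second half of the corollary is then a direct application of the functoriality of homotopy colimits set up in Section~\ref{funhocolim}. I would note that a $0$-cell of $[J,Cat]/K$ is a pair $(F,G)$ and a $1$-cell is a pair $(\epsilon,\eta)$ of a natural transformation and a modification, and that $\hocolim$ sends such a $1$-cell to $\hocolim[\epsilon]\eta$. Here I take $J=Fin$, $K=\gs$ (which has all small coproducts, so the construction applies), and the source $0$-cell to be $(\tilde\I\times\tilde\I)\circ\Delta$ together with $G^A_Sr_S\smsh G^A_Sr_S$, while the target is $(\tilde\I,G^A_Sr_S)$. The pair $(\tilde\mu,M\mu^A)$ — where $M\mu^A$ denotes the modification just assembled, restricted along $\tilde\mu$ — is precisely a $1$-cell in $[Fin,Cat]/\gs$, so applying $\hocolim$ produces the map $\hocolim[\tilde\I(S)\times\tilde\I(S)]G^A_Sr_S\smsh G^A_Sr_S\to\hocolim[\tilde\I(S)]G^A_Sr_S$. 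Precomposing with the standard shuffle map from a smash of homotopy colimits into the homotopy colimit over the product category (which exists because smashing distributes over the coproducts defining $\hocolim$ in each simplicial degree) yields the multiplication $\mu^A_S\colon\Lambda_SA\smsh\Lambda_SA\to\Lambda_SA$, natural in $S$ by naturality of all the ingredients.

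The main obstacle I expect is purely bookkeeping rather than conceptual: verifying that the pasted $2$-cell really is a modification, i.e.\ that its components are compatible with the structure $2$-cells of the lax functors involved under every morphism $f\colon S\to T$ in $Fin$. This is the coherence hexagon/pentagon-type diagram of the form already displayed in the proof of Lemma~\ref{multmod} (the large commuting diagram with the rearrangement isomorphisms and the multiplications of $A$), now combined with the coherence data of Street's construction encoded in $r$ and of the monoidal structure on $\I$ encoded in $\mu$ and $\tilde\mu$. Since each of $M$ and $\mu^A$ has separately been checked to be a modification, and whiskering preserves modifications, the composite is automatically one; so the real content is just unwinding the definitions to confirm that the pasting in the stated diagram is well formed — no new diagram chase is needed beyond invoking Lemma~\ref{multmod} and the construction of $M$. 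The remaining point, that the shuffle map into $\hocolim$ over a product category is natural and compatible with $\hocolim[\tilde\mu]{(M\mu^A)}$, follows from the explicit degreewise formula for $\hocolim$ recalled in Section~\ref{funhocolim} together with the fact that $\smsh$ commutes with the coproducts appearing there.
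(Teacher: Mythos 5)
Your proposal is correct, and it matches what the paper does: the paper in fact gives no explicit proof for this corollary, treating it as an immediate formal consequence of the modification $M$ constructed just above, Lemma~\ref{multmod}, and the functoriality of $\hocolim$ from Section~\ref{funhocolim} — which is precisely the pasting-plus-$\hocolim$-plus-shuffle argument you spell out. The only caveat worth flagging is a directional one: $M$ is defined as $r\circ\tilde\mu\Rrightarrow\mu\circ(r\times r)$, so to paste with $\mu^A$ as you describe you should use $M^{-1}$ (which exists since $M$ is declared invertible); this is cosmetic but should be said if one writes the pasting out carefully.
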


\begin{theo}Let $A$ be a connective commutative $\ess$-algebra.
  The multiplication $\mu^A_S\colon\Lambda_SA\smsh \Lambda_SA\to
  \Lambda_SA$ is associative and unital.% and $E_\infty$.
\end{theo}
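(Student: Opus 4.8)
The plan is to deduce associativity and unitality of $\mu^A_S$ directly from the corresponding coherence data established one level down, namely the modification in the preceding corollary together with Lemma \ref{Iismonoidal} and the strict associativity and unitality of $\tilde\mu$. First I would record that associativity and unitality of a multiplication map built via the functoriality of homotopy colimits follows formally once the input data — the transformation $(\tilde\mu, M\mu^A)$ and the natural shuffle map relating a smash of homotopy colimits to the homotopy colimit of the smash — satisfy the analogous coherence identities; this is because $\hocolim$ is a functor $[J,Cat]/K \to [J\times\Delta^o,K]$ preserving composites of morphisms (Section \ref{funhocolim}), and the shuffle map is associative and unital on the nose for the particular indexing category $\tilde\I(S)\times\tilde\I(S)$, where the monoidal structure on $\I$ was chosen to be strictly associative and unital.

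Concretely, I would proceed in three steps. First, reduce associativity of $\mu^A_S$ to an equality of two modifications $(\tilde\I\times\tilde\I\times\tilde\I)\circ\Delta \Rrightarrow \gs$ obtained by composing $M\mu^A$ with itself along the two ways of bracketing, precomposed with $r\times r\times r$ and $G^A\smsh G^A\smsh G^A$; by the corollary these both reduce to the statement that multiplication in $A$ is associative and that the coherence isomorphisms in $\I$ (from Lemma \ref{Iismonoidal}) satisfy the pentagon, which they do since $\I$ is a genuine symmetric monoidal category and $A$ a genuine commutative $\ess$-algebra. Second, I would check that the shuffle transformation $\left(\hocolim G_S^A r_S\right)^{\smsh 2}\to \hocolim\left(G_S^A r_S\right)^{\smsh 2}$ together with its threefold analogue fit into a commuting associativity square — this is purely combinatorial, coming from the fact that smashing a coproduct of simplices indexed by nerves of $\tilde\I(S)$ distributes in an associative way. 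Third, I would handle unitality: the unit $\ess \to \Lambda_S A$ picks out the basepoint/unit object, and one checks that $\tilde\mu_S$ restricted to the unit factor is the identity because the monoidal unit of $\I$ is strict, and that $\mu^A_S$ restricted along the unit is the structural isomorphism $\ess \smsh G^A_S(i)\cong G^A_S(i)$, which is respected by $M$.

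The main obstacle I expect is bookkeeping rather than mathematics: verifying that the modification from the corollary, when fed into the homotopy-colimit functoriality machinery of Section \ref{funhocolim}, actually yields the associativity pentagon for $\mu^A_S$ after one untangles the interaction of three separate pieces of coherence — the rectification transformation $r$, the non-strict $\mu$ from Lemma \ref{Iismonoidal}, and the strict $\tilde\mu$, related by the invertible modification $M$. The subtlety is that $X\mapsto\tilde\I(X)\times\tilde\I(X)$ is \emph{not} the rectification of $X\mapsto\I(X)\times\I(X)$ (as the paper explicitly warns), so one cannot simply quote naturality of Street's construction; instead one must track $M$ explicitly through the threefold composite. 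I would argue that because $M$ is an \emph{invertible} modification and the target coherence (pentagon for $\mu$, associativity of $\mu^A$, associativity in $A$) holds strictly or canonically, the two bracketings of $M\mu^A$ agree, and hence the induced maps on homotopy colimits agree; unitality is analogous but easier since $G^A_{\id_S} = \id_{G^A_S}$ kills the relevant unit constraints. The upshot is that $\Lambda_S(A)$ with $\mu^A_S$ is an (honest, not merely homotopy-coherent) associative unital $\ess$-algebra, though — as remarked before the statement — not strictly commutative.
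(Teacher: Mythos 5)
Your plan is essentially the same as the paper's, which offers only a one-line proof (``direct checking using the explicit formula for $\mu^A$ given in the proof of Lemma \ref{multmod}, and is left as an exercise''); you have simply laid out the organization of that direct check, correctly isolating the three pieces that must be verified: strict associativity and unitality of $\tilde\mu$ (inherited from the strict monoidal structure on $\I$), associativity and unitality of the combinatorial shuffle map taking a smash of homotopy colimits to the homotopy colimit of the smash, and coherence of the invertible modification $M$ together with $\mu^A$ across the two bracketings. One small slip worth correcting: in the unitality step you cite $G^A_{\id_S}=\id_{G^A_S}$, but that equality is the unit constraint for the lax functoriality of $G^A$ in the finite set $S$, not the monoidal-unit constraint for the multiplication $\mu^A_S$; what you actually need is that the monoidal unit of $\I$ (hence of $\tilde\I(S)$) is strict, and that the unit $\ess\to A$ is compatible with the explicit three-step formula for $\mu^A_S(i,j)$ in the proof of Lemma \ref{multmod}.
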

\begin{proof}
  Proving associativity and unitality is done by direct
  checking using the explicit formula for $\mu^A$ given in the proof
  of Lemma \ref{multmod}, and
  is left as an exercise.  %{\color{red}Proving
  %$E_\infty$ should proceed as follows: Consider
  %$\Lambda_{S^1}(\Lambda_SA)\simeq\Lambda_{S^1\times S}A$.  Since the
  %multiplication in $\Lambda_SA$ is preserved to give a multiplication
  %in $\Lambda_{S^1}(\Lambda_SA)$, it must be $E_\infty$ MAKE THIS
  %PRECISE!!!  }
\end{proof}
\begin{remark}
  Note that this multiplication is also $E_\infty$.  This may be shown, for
  instance, by
  adapting Hesselholt's and Madsen's ``spherewise'' model for the
  multiplication, as in \cite[1.7]{HMfinite}, to the current setup.  This
  approach was
  pointed out to us by C. Schlichtkrull, and details will appear in a
  future publication of Schlichtkrull.  An alternative approach, which
  works if $A$ is cofibrant as an $\ess$-module is to shorten the
  comparison map in 4.3.4 by only using $\tilde\I$'s and observe that
  it is multiplicative, and the right hand side is commutative.
\end{remark}
%From our analysis of functoriality of homotopy colimits, we see that
%this is exactly what we need in order to define a transformation
%$$\hocolim[\tilde\I\times\tilde\I]G^Ar\smsh G^Ar\Rightarrow
%\hocolim[\tilde\I]G_Ar.$$
%
%\begin{Def}
%  If $A$ is a commutative $\ess$-algebra, the multiplication in
%  $\Lambda A$ is defined by the obvious transformation
%$$\left(\hocolim[\tilde\I]G^Ar\right)\smsh\left(\hocolim[\tilde\I]G^Ar\right)\Rightarrow
%  \hocolim[\tilde\I\times\tilde\I]G^Ar\smsh G^Ar$$ followed by the
%  transformation 
%$\hocolim[\tilde\I\times\tilde\I]G^Ar\smsh G^Ar\Rightarrow
%\hocolim[\tilde\I]G_Ar$ defined above.
%\end{Def}%
%
%\begin{theo}
%  Let $A$ be a connective commutative $\ess$-algebra.  The
%  multiplication makes $\Lambda A$ a functor from $Fin$ to connective
%  $\ess$-algebras.
%\end{theo}
%\begin{proof}
%  In order to see associativity, it is perhaps most instructive to write out the
%  constructions explicitly.  
%\end{proof}

%Note that even when $A$ is a commutative $\ess$-algebra we have not
%claimed that $\TS[S]A$ is an $\ess$-algebra, so this 
%equivalence is only natural for bijections of $R$ and $S$.
%To extend this functoriality one will
%have to generalize the definition of $\TS[S]A$.

%\begin{note}
% If $A$ only is $E_\infty$ the definition of $\TS[S](A)$ has to be altered.
% Presumably this can be done along the lines sketched in the 2005
% version of these notes.
%\end{note}

\subsection{The Loday functor as a functor of unbased spaces}
If $X$ is a finite (unbased) space, then we define 
$$\TS[X](A)=diag^*\{[q]\mapsto \TS[X_q](A)\}.$$
If $X$ is any (unbased) space, then we define 
$$\TS[X](A)=\colim \TS[S]A$$
where $S$ varies over the finite subspaces of $X$. %  This gives full functoriality, and -- for instance -- if $G$ is some simplicial group and $X$ is a $G$-space, then $\ddT_{X}(A)$ will be a $G$-space.  %In particular, if $\T$ is the circle group, then $T_{\sin \T}(A)$ will be a $\sin\T$-space. 

More generally, let $F$ be a functor from the category $Fin$ of finite
(unbased) sets to the category 
of $\ess$-modules ($\Gamma$-spaces).
If $X$ is a finite (unbased) space, then we define 
$F(X) =diag^*\{[q]\mapsto F(X_q)\}.$
If $X$ is any (unbased) space, then we define 
$F(X)=\colim F(S)$
where $S$ varies over the finite subspaces of $X$. %  This gives full functoriality, and -- for instance -- if $G$ is some simplicial group and $X$ is a $G$-space, then $\ddT_{X}(A)$ will be a $G$-space.  %In particular, if $\T$ is the circle group, then $T_{\sin \T}(A)$ will be a $\sin\T$-space. 
Note that the following lemma in particular applies to the Loday
functor.  {A map of $\Gamma$-spaces $X\to Y$ is a
  pointwise equivalence (resp. pointwise $n$-connected) if for all finite pointed sets $S$ the map
  $X(S)\to Y(S)$ is a weak equivalence (resp. $n$-connected).  This is stronger than claiming
  that the map of associated spectra is a stable equivalence
  (resp. stably $n$-connected).}
\begin{lemma}
\label{lem:hofunctor}
  Let $f \colon X \to Y$ be a map of simplicial sets and let $f_+
  \colon X_+ \to Y_+$ denote the map obtained by adding a disjoint
  base point.
  \begin{enumerate}
%  \item If $f$ is a weak equivalence, then the induced map $$F(f)
%    \colon F(X)\to F(Y)$$ is a pointwise weak equivalence.
  \item If $f$ is a weak equivalence (resp. $n$-connected), then the induced map 
$$F(f)   \colon F(X)\to F(Y)$$ is a pointwise equivalence (resp. $n$-connected).
  \item If $f$ is a weak equivalence (resp. $n$-connected) after $p$-completion, then 
$F(f)\p$ is  a pointwise equivalence (resp. $n$-connected). %f the $p$-completion $f\p$ is a weak equivalence, then
%    $F(f)\p$.  is a pointwise weak equivalence.
  \item If $E$ is a spectrum and $E \wedge f_+$ is a stable equivalence, then $E \wedge
    F(f)$ is a stable equivalence.
  \end{enumerate}

%    Furthermore, i
\end{lemma}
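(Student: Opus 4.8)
The plan is to reduce everything to the case of finite (unbased) spaces, where $F$ is defined by a homotopy colimit of a simplicial object, and then to the case of a single finite set, where we can invoke B\"okstedt's Lemma via the analysis already carried out in the proof of Lemma \ref{lem:Zissmash}. First I would observe that all three statements are about the map $F(f)$ obtained by applying $F$ degreewise to $f$ and then taking the diagonal (followed, in the case of general $X$, by a filtered colimit over finite subspaces); since filtered colimits of simplicial sets preserve weak equivalences, $n$-connectivity, stable equivalences and commute with $p$-completion, it suffices to treat finite simplicial sets $X$ and $Y$. For a finite simplicial set, $F(X)$ is the diagonal of $[q]\mapsto F(X_q)$, and the realization (diagonal) of a map of bisimplicial sets which is a weak equivalence (resp.\ $n$-connected) in each simplicial degree is again a weak equivalence (resp.\ $n$-connected); so the whole statement reduces to the corresponding claim for the map $F(g)\colon F(S)\to F(S')$ induced by a map $g\colon S\to S'$ of finite sets.

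Next I would recall, from the proof of Lemma \ref{lem:Zissmash}, that $\TS[S](A)(K)=\hocolim_{i\in\widetilde\I(S)}G^A_S(r_S(i))(K)$ is, after the cofinality reduction of Lemma \ref{Bolemma}, weakly equivalent to $\hocolim_{i\in\I^S}G^A_S(i)(K)$, and that for each fixed $n$ and each finite pointed set $K$ there is an object $i\in\I^S$ for which the structure map $G^A_S(i)(K)\to\TS[S](A)(K)$ is $n$-connected; moreover $G^A_S(i)(K)=Map_*(\bigwedge_{s\in S}S^{i(s)},K\wedge\bigwedge_{s\in S}A(S^{i(s)}))$ is a concrete model for the derived $S$-fold smash power of $A$. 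The point is that the induced map on these approximating terms is, up to $n$-connectivity, just $\bigwedge_{s\in S}\mathrm{id}$ reindexed along $g$ together with the multiplications in $A$ coming from the nontrivial fibres of $g$; in particular, for a map $g$ that is a weak equivalence after $p$-completion (part 2), or smashed with a spectrum $E$ for which $E\wedge g_+$ is a stable equivalence (part 3), the relevant functor on $\Gamma$-spaces takes the hypothesis on $f$ into the desired conclusion on the approximating terms. Letting $n\to\infty$ (and using that connectivity of a $\Gamma$-space is detected pointwise, as remarked just before the lemma) then propagates the conclusion to $\TS[X](A)$ itself.

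For part (1) this is essentially immediate from the above: $A\mapsto\dT[S]A$ preserves connectivity of maps of commutative $\ess$-algebras by the first statement of Lemma \ref{lem:Zissmash}, and more to the point the functor $X\mapsto F(X)$ is built degreewise out of $F(X_q)$, so an $n$-connected $f\colon X\to Y$ gives degreewise $n$-connected maps of the approximating spaces and hence a pointwise $n$-connected $F(f)$. For parts (2) and (3) the extra input is that $p$-completion and smashing with a fixed spectrum $E$ are themselves functors that commute with homotopy colimits and with the diagonal, so they can be pushed inside the construction; then the hypothesis ``$f$ is an equivalence after $p$-completion'' (resp.\ ``$E\wedge f_+$ is a stable equivalence'') is exactly what is needed to conclude after applying the approximating analysis degree by degree. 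The main obstacle is bookkeeping: one must be careful that the approximation ``$G^A_S(i)(K)\to\TS[S](A)(K)$ is $n$-connected for a suitable $i$'' is uniform enough in the simplicial degree $q$ of $X$ and in the pointed set $K$ to survive passage to the diagonal and to the colimit over finite subspaces, and that for part (3) the spectrum-level smash $E\wedge(-)$ interacts correctly with the (pointwise, not stable) connectivity estimates coming from B\"okstedt's Lemma — this is routine but is where all the care goes, and is presumably why the authors relegate the computation to the proof they have already given for Lemma \ref{lem:Zissmash}.
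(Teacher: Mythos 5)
The reduction to a single map of finite sets does not work, and it is where the proposal breaks. You argue that since $F(f)$ is the diagonal of $[q]\mapsto F(f_q)$, and the diagonal of a degreewise weak equivalence of bisimplicial objects is a weak equivalence, the statement reduces to a claim about a map $g\colon S\to S'$ of finite sets. But when $f\colon X\to Y$ is a weak equivalence of simplicial sets, the degreewise maps $f_q\colon X_q\to Y_q$ are arbitrary maps of finite sets, not bijections, so $F(f_q)$ need not be an equivalence at all: $\Delta[1]\to\Delta[0]$ is a weak equivalence with no degreewise bijection, and the map $\dT[\{1,2\}]A\to\dT[\{1\}]A$ induced by a surjection is the multiplication $A\smsh A\to A$, certainly not a stable equivalence. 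The realization lemma you cite has degreewise weak equivalence as its \emph{hypothesis}, which is exactly what you do not have. Consequently the subsequent B\"okstedt--Lemma analysis of ``the induced map on approximating terms'' has no content, because there is no well-posed claim about single maps of finite sets to reduce to; the homotopy invariance of $X\mapsto F(X)$ is an emergent property of the diagonal, not a degreewise one.

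The paper's argument isolates the dependence on $X$ \emph{before} passing to a diagonal. It introduces the simplicial $\Gamma$-space
\begin{displaymath}
(LF)(X)_k = \bigvee_{S_0,\dots,S_k} F(S_k)\wedge\Fin(S_k,S_{k-1})_+\wedge\cdots\wedge\Fin(S_1,S_0)_+\wedge\uspaces(S_0,X)_+
\end{displaymath}
together with a natural pointwise weak equivalence $(LF)(X)\to F(X)$. In this bar resolution $X$ enters only through the factor $\uspaces(S_0,X)_+\cong(X^{S_0})_+$ with $S_0$ finite, and finite powers of simplicial sets do preserve weak equivalences, $n$-connectivity, equivalences after $p$-completion (using $\uspaces(C,B)\p\simeq\uspaces(C,B\p)$ for $C$ finite), and equivalences after smashing with $E$. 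All three parts then follow at once from this one isolation step, with no appeal to B\"okstedt's Lemma or to the connectivity estimates of Lemma \ref{lem:Zissmash}. If you want to keep a hands-on argument, the thing to reproduce is precisely such a device that makes the $X$-dependence manifestly homotopy-invariant; a degreewise reduction cannot do this.
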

\begin{proof}
  Let $LF$ denote the functor from simplicial sets to pointed
  simplicial $\Gamma$-spaces with
  \begin{displaymath}
    (LF)(X)_k = \bigvee_{S_0,\dots,S_k} F(S_k) \wedge \Fin(S_k,S_{k-1})_+ \wedge
    \dots \wedge \Fin(S_1,S_0)_+ \wedge \uspaces(S_0,X)_+ 
  \end{displaymath}
The functor $X \mapsto (LF)(X)$ clearly has the stated
properties (for spaces $B$ and $C$ we have equivalences
$(C_+\smsh B_+)\p\simeq (C\p_+\smsh B\p_+)\p$ and
$\uspaces(C,B)\p\simeq\uspaces(C,B\p)$.)  The result now is a
consequence of the fact that there is 
a natural pointwise weak equivalence $(LF)(X) \to F(X)$.
\end{proof}

The above proof also gives the following result.
\begin{lemma}
  \label{prescomp}
  For every space $X$ the map $F(X)\p \to F(X\p)\p$ is a weak
  equivalence.
\end{lemma}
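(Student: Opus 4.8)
The plan is to extract Lemma \ref{prescomp} from the proof of Lemma \ref{lem:hofunctor}, exactly as the preceding sentence advertises. The map in question is $F(\eta_X)\p$, where $\eta_X\colon X\to X\p$ is the $p$-completion map (there is no other natural candidate). Viewed this way, the statement is nothing but the special case of Lemma \ref{lem:hofunctor}(2) for $f=\eta_X$: one only has to observe that $\eta_X$ is a weak equivalence after $p$-completion, i.e. that $(\eta_X)\p\colon X\p\to(X\p)\p$ is a weak equivalence, and this is precisely the (standard Bousfield--Kan) fact that $X\p$ is already $p$-complete. I would nonetheless prefer to rerun the underlying argument directly, because it makes transparent why no hypothesis on $X$ is needed and why ``the above proof also gives'' the result.

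Here are the steps. First, by the proof of Lemma \ref{lem:hofunctor} there is a natural pointwise weak equivalence $(LF)(Z)\to F(Z)$, and levelwise $p$-completion of $\Gamma$-spaces preserves pointwise weak equivalences; applying this to the naturality square for $\eta_X$ reduces the problem to showing that $(LF)(\eta_X)\p$ is a pointwise weak equivalence. Second, inspect the explicit formula for $(LF)(Z)_k$: the only tensor factor depending on $Z$ is $\uspaces(S_0,Z)_+$ for a finite set $S_0$, and, after evaluating on a finite pointed set $T$ and in each simplicial degree, everything is assembled from this factor by smashing with the fixed pieces $F(S_k)(T)$, $\Fin(S_k,S_{k-1})_+$, $\dots$, $\Fin(S_1,S_0)_+$, by taking the (possibly infinite) wedge over the tuples $S_0,\dots,S_k$, and by passing to the diagonal in the simplicial direction. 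Third, $\eta_X$ is by construction an $\mathbb{F}_p$-homology isomorphism, hence so is $\uspaces(S_0,\eta_X)\colon X^{S_0}\to(X\p)^{S_0}$ by the Künneth theorem over $\mathbb{F}_p$; and each of the three assembly operations preserves $\mathbb{F}_p$-homology isomorphisms (Künneth again for the smashes, additivity of homology for the wedges, the standard spectral sequence for the realization). Therefore $(LF)(\eta_X)(T)$ is an $\mathbb{F}_p$-homology isomorphism for every finite pointed $T$. Fourth, $p$-completion inverts $\mathbb{F}_p$-homology isomorphisms, so $(LF)(\eta_X)\p$ — and hence $F(\eta_X)\p$ — is a pointwise weak equivalence, a fortiori a weak equivalence. (Equivalently, in the third step one may argue directly with the identities $(C_+\smsh B_+)\p\simeq(C\p_+\smsh B\p_+)\p$ and $\uspaces(S_0,Z)\p\simeq\uspaces(S_0,Z\p)$ already recalled in the proof of Lemma \ref{lem:hofunctor}.)

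The one point that requires a little care is the interchange of $p$-completion with the infinite wedges and the geometric realization inside $(LF)$ — but this is controlled at the level of $\mathbb{F}_p$-homology, where it is automatic, and it is the same bookkeeping already done for Lemma \ref{lem:hofunctor}. The genuinely non-formal inputs are only that $p$-completion commutes with finite products (so $\uspaces(S_0,Z)\p\simeq\uspaces(S_0,Z\p)$ for finite $S_0$) and that $X\p$ is $p$-complete; both are standard Bousfield--Kan facts, and neither uses any connectivity or niceness assumption on $X$, which is why the statement holds for \emph{every} space $X$.
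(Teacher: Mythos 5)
Your proposal is correct and is essentially the paper's argument: the paper's entire proof of Lemma~\ref{prescomp} is the sentence ``The above proof also gives the following result,'' and you simply unpack what that means, both by noting that the statement is the special case of Lemma~\ref{lem:hofunctor}(2) for $f=\eta_X$ (given that $X\p\to(X\p)\p$ is an equivalence) and by rerunning the $LF$-resolution argument using the two identities $(C_+\smsh B_+)\p\simeq(C\p_+\smsh B\p_+)\p$ and $\uspaces(C,B)\p\simeq\uspaces(C,B\p)$ already cited there. Your version makes explicit the two standard inputs the paper leaves implicit (that $\eta_X$ is an $\mathbb{F}_p$-homology isomorphism and that $X\p$ is $p$-complete), which is a welcome clarification but not a departure from the paper's route.
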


\subsubsection{Adams operations}\label{adams}
Any endomorphism $X\to X$ gives by functoriality rise to an ``operation''
$\TS[X](A)\to\TS[X](A)$.

For instance, if $X=\T^1 \cong \sin|\Sl|$, the $r$-th power map
$\T^1\to\T^1$ gives an operation on ordinary topological Hochschild
homology $THH(A) \simeq \TS[{\T^1}](A)$ corresponding to the one described  by McCarthy
\cite{McCoper}.  There is a difference in description in that he uses
the usual simplicial circle $\Sl$, and so has to subdivide to express
the operation, whereas in our situation this can be viewed as a
reflection of the fact that (the image of the acyclic cofibration)
$sd_r\Sl\subseteq\sin|sd_r\Sl|\cong\T^1$ is one of the
legitimate finite simplicial subsets over which we perform our colimit.

This is general: for any functor $F$ from $Fin$ to, say, spaces, the
$r$th Adams operation on $\pi_q(F(\Sl))$ is given through McCarthy's
interpretation as the composite
$$\pi_q(F(\Sl))\cong\pi_q(F(sd_r(\Sl)))\to \pi_q(F(\Sl))$$ where the
last map is induced by a certain map $sd_r\Sl\to \Sl$.  That this
corresponds to our definition is seen through the %(homotopy?) 
commutativity of
$$\xymatrix{\sin|sd_r\Sl|\ar[r]^\cong&\sin|\Sl|\ar[r]^{z\mapsto z^r}&\sin|\Sl|\\
sd_r\Sl\ar[u]^\simeq\ar[rr]&& \Sl\ar[u]^\simeq}
$$

For higher dimensional tori one gets
operations for every integral matrix $\alpha$ with nonzero determinant.
In this case Lemma \ref{prescomp} gives  $\TS[\T^n](A)_p\simeq \TS[\T^n_p](A)_p$ and so
we have an action by $GL_n(\bZ_p)$ which in the one-dimensional case
corresponds to the action by the $p$-adic units. 

\section{The fundamental cofibration sequence}
\label{sec:fundcofseq}

In this section we investigate fixed points of the Loday functor $\dT[X]A$
with respect to 
group actions on $X$. This leads to a generalized version of the
fundamental cofibration sequence for B\"okstedt's topological
Hochschild homology. On the way we describe the norm map from homotopy
orbits to homotopy fixed points.

\subsection{The norm cofibration sequence}
\label{sec:herecomesnorm}
Let $G$ be a finite group.
% Notice
%   that $A(\I^\phi i)$ is functorial with respect to isomorphisms of
%   $S$.
For convenience, we introduce the following shorthand notation: if $S$
is a finite set and $A$ is a
commutative $\ess$-algebra and $j\in\I^{S}$ we
write $A(j)$ for the space
$$\bigwedge_{s\in
  S}A(S^{j(s)}),$$
so that (c.f. \ref{thelont}) $G^A_S(j)=Map_*(\ess(j),A(j)).$

We say that a set $\mathcal F$ of subgroups of $G$
is a {\em closed family} if it has the property  that if $H \in \mathcal F$
and $H \le gKg^{-1}$, for $g \in G$ and a subgroup $K$ of $G$, then $K
\in \mathcal F$.
If $G$ acts on $S$ and $\mathcal F$ is a closed family of subgroups in $G$,
 we define
\begin{displaymath}
  G^A_S(\mathcal F)(j) = Map_*(\bigcup_{H \in \mathcal F} S(j)^H,A(j)).
\end{displaymath}
This is a transformation from the  
weak functor $S\mapsto\I^S$ to the constant functor from finite
$G$-sets to categories sending everything to $\gs$.
Just like when we constructed $\dT(A)$, we can define a functor $\dT[]^A(\mathcal F)$ from the
category of $G$-spaces to the category of spectra, with
\begin{displaymath}
  \dT[S]^A(\mathcal F) = \hocolim[\Kd\I(S)]G^A_S(\mathcal F) \circ r_S
\end{displaymath}
when $S$ is a finite $G$-set. 
In particular, if $\mathcal F$ is the empty family of subgroups of $G$,
then $\dT[S]^A(\mathcal F) = *$.
%This is a special case of the following easy observation.
\begin{lemma}
\label{resnormal}
  Let $N$ be a normal subgroup in $G$ and let $\mathcal F$ be the
  closed family of subgroups of $G$ consisting of
  the subgroups containing $N$. There is, for every $j \in \I^S$, a natural
  isomorphism of $G/N$-spaces of 
  the form 
\begin{displaymath}
  G^A_S(\mathcal F)(j)^N = Map_*(S(j)^N,A(j)^N).
\end{displaymath}
In particular, if $\mathcal F$ is the family of all subgroups of $G$,
then $\dT[S]^A(\mathcal F) 
= \dT[S](A)$. 
\end{lemma}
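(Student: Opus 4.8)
The plan is to unwind the definitions on the level of $\Gamma$-spaces and reduce everything to an elementary statement about fixed points of mapping spaces. First I would observe that for $j\in\I^S$ the $\Gamma$-space $G^A_S(\mathcal F)(j)=Map_*(\bigcup_{H\in\mathcal F}S(j)^H,A(j))$ carries a $G$-action coming from the $G$-action on $S$ (hence on $j$, on $S(j)$ and on $A(j)$), and that, since $\mathcal F$ consists exactly of the subgroups $K$ with $N\le K$, the subspace $\bigcup_{H\in\mathcal F}S(j)^H$ of $S(j)$ is simply $S(j)^N$: indeed $S(j)^H\subseteq S(j)^N$ for every $H\in\mathcal F$ because $N\le H$, and $S(j)^N$ itself occurs in the union since $N\in\mathcal F$. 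Thus, before taking $N$-fixed points, $G^A_S(\mathcal F)(j)=Map_*(S(j)^N,A(j))$ as a $G$-space.

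Next I would take $N$-fixed points of this mapping $\Gamma$-space. The $N$-action on $Map_*(S(j)^N,A(j))$ is by conjugation: $(n\cdot f)(x)=n\cdot f(n^{-1}x)$. But $N$ acts trivially on the source $S(j)^N$, so this reduces to $(n\cdot f)(x)=n\cdot f(x)$, i.e.\ the $N$-action is only post-composition with the $N$-action on the target $A(j)$. Consequently a map $f$ is $N$-fixed precisely when it factors through $A(j)^N$, giving the natural identification
\begin{displaymath}
  G^A_S(\mathcal F)(j)^N = Map_*(S(j)^N,A(j)^N)
\end{displaymath}
as $\Gamma$-spaces, and this identification is evidently $G/N$-equivariant since the residual $G/N$-action on both sides is induced by the $G$-action on $S(j)^N$ and on $A(j)^N$ (the latter being well-defined because $N$ is normal). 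This is the first assertion.

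For the second assertion, when $\mathcal F$ is the family of \emph{all} subgroups of $G$, the trivial subgroup lies in $\mathcal F$, so $\bigcup_{H\in\mathcal F}S(j)^H\supseteq S(j)^{\{e\}}=S(j)$, hence the union is all of $S(j)$ and $G^A_S(\mathcal F)(j)=Map_*(S(j),A(j))=G^A_S(j)$, naturally in $j$ and compatibly with the structure maps of the transformations (one checks the suspension-and-multiply structure maps of \S\ref{thelont} are literally the same, since they only involve the ambient $S(j)$ and $A(j)$, not the chosen subspace). Applying the same homotopy colimit construction $\hocolim[\Kd\I(S)](-)\circ r_S$ to equal transformations yields $\dT[S]^A(\mathcal F)=\dT[S](A)$, naturally in the finite $G$-set $S$, and hence for all $G$-spaces by the degreewise/colimit extension. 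The only point requiring a little care — and the one I would regard as the main obstacle, though it is routine — is verifying that the identification of $N$-fixed points with $Map_*(S(j)^N,A(j)^N)$ is compatible with the structure maps $G^A_S(\mathcal F)(\alpha)$ for morphisms $\alpha\colon j\to j'$ in $\I^S$ and with the functoriality in $S$, so that it is genuinely an isomorphism of transformations and not merely a levelwise bijection; this follows because all the structure maps are built from suspensions and the multiplication of $A$, both of which are $G$-equivariant and therefore restrict to $N$-fixed points.
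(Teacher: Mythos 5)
Your proof is correct, and since the paper states Lemma \ref{resnormal} without proof (evidently regarding it as immediate from the definitions), your argument supplies exactly the expected one: identify $\bigcup_{H\in\mathcal F}S(j)^H$ with $S(j)^N$ using that $N\in\mathcal F$ and $N\le H$ for every $H\in\mathcal F$, then note the conjugation action of $N$ on $Map_*(S(j)^N,A(j))$ is trivial on the source so the $N$-fixed maps are precisely those landing in $A(j)^N$, with the second assertion being the special case $N=\{\one\}$.
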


Note that if $\mathcal G \subseteq \mathcal F$ is an inclusion of closed
families of subgroups of $G$, then the
inclusion 
\begin{displaymath}
  \bigcup_{H \in \mathcal G} S(j)^H \subseteq  \bigcup_{H \in \mathcal F} S(j)^H 
\end{displaymath}
induces a $G$-map
\begin{displaymath}
  \mathrm{res} : \dT[S]^A(\mathcal F) \to \dT[S]^A(\mathcal G). 
\end{displaymath}
We refer to these maps as {\em restriction maps}. However they should not be
confused with the restriction maps $R$ in Section 
\ref{sec:resmap} generalizing the restriction maps of Hesselholt and
Madsen \cite{HM1}. If $\mathcal F$ is the family of all subgroups in $G$,
then by Lemma \ref{resnormal} the restriction map takes the form
$$\mathrm{res} \colon \dT[S](A) \to \dT[S]^A(\mathcal G).$$

%we shall refer to maps of this form as {\em restriction maps}.
If the complement of $\mathcal G$ in $\mathcal F$ is equal to the
conjugacy class of a subgroup $K$ in $G$ we shall say that $\mathcal
F$ and $\mathcal G$ are {\em $K$-adjacent}. Moreover we let $W_G K =
N_GK/K$ denote the {\em Weyl group} of the subgroup $K$ in $G$. Here $N_GK$
is the {\em normalizer} of $K$ in $G$ consisting of those $g$ in $G$ with $gKg^{-1} = K$. 

In the next lemma we shall describe the homotopy fibre of the
restriction map when $\mathcal G  \subseteq \mathcal F$ are
$K$-adjacent. In the proof we will need a norm map $Z_{hG} \to Z^{hG}$
for $Z$ a $\Gamma$-space with action of $G$. Later we shall relate
the norm map to a transfer map, and therefore we need to choose a
specific representative of it. The discussion below is modeled on
Weiss and William's paper \cite[Section 2]{WW}. Given $n \ge 0$ we let $S^{nG}$ denote the
$G$-fold smash product of the $n$-sphere. This is our model for the
one-point compactification of the regular representation of $G$. The
map $\alpha \colon G_+ \wedge S^{nG} \to Map_*(G_+,S^{nG})$, with
$\alpha(g,x)(g) = x$ and with $\alpha(g,x)(h)$ equal to the base point in
$S^{nG}$ if $h \ne g$, and the diagonal inclusion $S^{nG} \to
Map_*(G_+,S^{nG})$ induce $G$-maps
\begin{eqnarray*}
  Map_*(S^{nG},Z(S^{nG})) 
  &\cong& 
  Map_*(G_+ \wedge S^{nG},Z(S^{nG}))^G \\ 
  &\gets& 
  Map_*(Map_*(G_+,S^{nG}),Z(S^{nG}))^G \\ 
  &\to& 
  Map_*(S^{nG},Z(S^{nG}))^G. 
\end{eqnarray*}
Here $G$ acts by conjugation on the first space, through the left
action of $G$ on itself on the second space and through the right
action of $G$ on itself on the third space. On the last
space $G$ acts trivially.
Passing to homotopy colimits we obtain a weak $G$-map
\begin{eqnarray*}
  \hocolim[n] Map_*(S^{nG},Z(S^{nG})) 
  &\xleftarrow{\simeq}& 
  \hocolim[n]  Map_*(Map_*(G_+,S^{nG}),Z(S^{nG}))^G \\ 
  &\to& 
  \hocolim[n] Map_*(S^{nG},Z(S^{nG}))^G 
\end{eqnarray*}
of $\Gamma$-spaces. This is an additive transfer associated to
$G$. Observing that there is a chain of stable equivalences between
$Z$ and $\hocolim[n] Map_*(S^{nG},Z(S^{nG}))$ we denote the resulting
map in the homotopy category of (na\"ive) $G$-$\Gamma$-spaces by 
$V^G \colon Z \to \hocolim[n] Map_*(S^{nG},Z(S^{nG}))^G$. The homotopy
category we have in mind is the one where we invert the $G$-maps whose
underlying non-equivariant maps are stable equivalences. 
Since $G$ acts trivially on the target, there is an induced map $\widehat V^G \colon
Z_{hG} \to \hocolim[n] Map_*(S^{nG},Z(S^{nG}))^G$.
The {\em norm map} is the weak map $N \colon Z_{hG} \to Z^{hG}$ given
by the composition
\begin{eqnarray*}
  Z_{hG} &\to& (Map_*(EG_+,Z))_{hG} \\ 
  &\xrightarrow {\widehat V^G}&
  \hocolim[n] Map_*(S^{nG},Map_*(EG_+,Z(S^{nG})))^G \simeq Z^{hG}   
\end{eqnarray*}
It is an easy matter to check that the composite homomorphism
\begin{displaymath}
  \pi_* Z \to \pi_* Z_{hG} \xto N \pi_* Z^{hG} \to \pi_* Z
\end{displaymath}
is multiplication by $\sum_{g \in G} g \in \pi_0(G_+ \wedge \ess)$. If
$Y$ is a $\Gamma$-space with trivial action of $G$, then there is a
commutative diagram of the form
\begin{displaymath}
  \begin{CD}
    \pi_*(G_+ \wedge Y) @>>>
    \pi_*(G_+ \wedge Y)_{hG} @>N>>
    \pi_*(G_+ \wedge Y)^{hG} @>>>
    \pi_*(G_+ \wedge Y) \\
    @V{\cong}VV @A{\cong}AA @V{\cong}VV @A{\cong}AA \\
    \bigoplus_G \pi_*Y @>{\nabla}>> \pi_*Y @. \pi_*Y @>{\Delta}>> \bigoplus_G \pi_*Y
  \end{CD},
\end{displaymath}
where $\nabla(\{g \mapsto y_g\}) = \sum_g y_g$ and $\Delta(y) = \{g
\mapsto y\}$. Since the composition $\bigoplus_G \pi_*Y \to
\bigoplus_G \pi_*Y$ from the lower left corner to the lower right
corner is the norm map, the composition $\pi_*Y \to \pi_*Y$ must be the
identity on $\pi_*Y$. Thus the norm map $N \colon (G_+ \wedge Y)_{hG}
\to (G_+ \wedge Y)^{hG}$ is a stable equivalence. An easy inductive
argument on cells shows that the norm map $N \colon Z_{hG} \to Z^{hG}$
is a weak equivalence when $Z$ is of the form $Z = Map_*(U,X)$ for $X$
a $G$-$\Gamma$-space and $U$ a finite free $G$-space.
% Noting that $G \times G$ acts on $G$ by the rule $x*(g,h) = g^{-1}xh$
% we can form the $G \cong (G \times 1)$-spectrum 
% \begin{displaymath}
%   D_G := Map_*(\ess \wedge EG_+,\ess \wedge G_+)^{h(1 \times G)}.
% \end{displaymath}
% This is Klein's {\em dualizing spectrum} for $G$ \cite{MR1819876}.
% We say that $G$ is a compact Lie group if it is the singular complex
% on a compact Lie group. The suspension spectrum on the one-point
% compactification of the adjoint representation of this compact Lie
% group is denoted $\ess^{\Ad_G}$.

\begin{lemma}[the norm cofiber sequence]
\label{lem:fundcofseq1}
  Let $G$ be a finite group, let  $\mathcal G \subseteq \mathcal F$
  be $K$-adjacent families of subgroups of $G$ and let $X$ be a non-empty
  free $G$-space.
  For every 
  commutative $\ess$-algebra $A$ the 
  homotopy fiber of the restriction map
  \begin{displaymath}
    \mathrm{res} \colon [\dT[X]^A({\mathcal F})]^G \to [\dT[X]^A({\mathcal G})]^G  
  \end{displaymath}
is equivalent to the homotopy orbit spectrum
\begin{displaymath}
  [\dT[{X/K}]A]_{hW_GK}.
\end{displaymath}
% %$$[\ess^{\Ad_{W_EK}} \wedge \dT[S/K](A)]_{hW_GK},$$
% $$[\ess^{\Ad_{W_EK}} \wedge Map_*(E/K_+,\dT[S/K](A))]_{hW_EK}$$
% and otherwise the homotopy fiber is contractible.
% %$$ \left(Map_*((E/G_+)^K,\dT[{X/(G \cap K)}]A)\right)_{hW_EK}.$$
\end{lemma}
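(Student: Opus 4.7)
The strategy is to identify the homotopy fiber of the restriction map pointwise in the indexing category $\widetilde\I(S)$ (for a finite $G$-subset $S\subseteq X$), take $G$-fixed points of the induced cofibration of mapping spaces, and then invoke the norm-map equivalence established in the discussion just above the lemma.

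First, I would unpack the restriction at a fixed level $j\in\widetilde\I(S)$. There it is $Map_*(\iota,A(j))$ for the $G$-cofibration
\[\iota\colon Y'(j):=\bigcup_{H\in\mathcal G}\ess(j)^H\ \hookrightarrow\ \bigcup_{H\in\mathcal F}\ess(j)^H =: Y(j).\]
Points of $Y(j)\setminus Y'(j)$ have isotropy in the single conjugacy class $\{gKg^{-1}\}_{g\in G}=\mathcal F\setminus\mathcal G$, so decomposing along $G$-orbits yields a $G$-equivariant identification
\[Y(j)/Y'(j)\ \cong\ G_+\smsh_{N_GK}\bar C(j),\]
where $\bar C(j):=\ess(j)^K/(\ess(j)^K\cap Y'(j))$ is a pointed space on which $K\le N_GK$ acts trivially and $W_GK=N_GK/K$ acts based-freely (nonbasepoint representatives have isotropy exactly $K$).

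Second, applying $Map_*(-,A(j))$ to the cofibration $Y'(j)\hookrightarrow Y(j)\to Y(j)/Y'(j)$ and taking $G$-fixed points turns it into a homotopy fibration whose fiber is, by the induced/restricted adjunction and the triviality of the $K$-action on $\bar C(j)$,
\[Map_*(\bar C(j),A(j))^{N_GK}\ \cong\ Map_*(\bar C(j),A(j)^K)^{W_GK}.\]
Assembling these fibers via $\hocolim$ over $\widetilde\I(S)$ and invoking Corollary~\ref{fixfix} together with Lemma~\ref{Bolemma} (the isomorphism $\widetilde\I(S)^K\cong\widetilde\I(S/K)$ and its compatibility with the Bökstedt hocolim) identifies the resulting $W_GK$-spectrum, call it $Z$, with a $W_GK$-equivariant model of $\dT[{S/K}]A$, in which the $W_GK$-action is free in the sense inherited from the $G$-freeness of $S\subseteq X$.

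Third, the based-freeness of the $W_GK$-action on each $\bar C(j)$ places $Z$ in the class for which the norm map $N\colon Z_{hW_GK}\to Z^{hW_GK}$ is a stable equivalence, by the inductive cell argument given explicitly before the lemma. The same freeness, via the equivalence $E(W_GK)_+\smsh U\simeq U$ for $W_GK$-free $U$, identifies $Z^{hW_GK}$ with the categorical fixed points $Z^{W_GK}$. Composing yields $Z^{W_GK}\simeq Z_{hW_GK}\simeq[\dT[{S/K}]A]_{hW_GK}$, and passing to the filtered colimit over finite $G$-subsets $S\subseteq X$ extends the conclusion to the full $G$-space $X$.

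The hard part I expect is the identification in the second step: the $W_GK$-spectrum assembled from the cofibers $\bar C(j)$ must be matched, not merely abstractly but together with its free $W_GK$-action, with the $\dT$-construction on $S/K$. This requires combining Corollary~\ref{fixfix} with a careful point-set analysis of how the strict-supergroup fixed points are excised inside $\ess(j)^K$, and checking that the residual Bökstedt-type homotopy colimit is precisely the one defining $\dT[{S/K}]A$. Once that identification is in hand, the norm step is a direct application of the preparatory material.
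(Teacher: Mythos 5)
Your high-level outline matches the paper's: reduce to finite $G$-sets, identify the fiber of the restriction map at each object of the indexing category via an orbit-type analysis (the nonbasepoint orbits of the cofiber $Z(j)$ are all $G/K$, so $Map_*(Z(j),A(j))^G\cong Map_*(Z(j)^K,A(j)^K)^{W_GK}$), use freeness to pass to homotopy fixed points, use the norm map to pass to homotopy orbits, and finally compare with $[\dT[S/K]A]_{hW_GK}$.

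The genuine gap is in what you call the ``hard part.'' You assert that the $W_GK$-spectrum $Z$ assembled from the cofibers $\bar C(j)=Z(j)^K$ is ``a $W_GK$-equivariant model of $\dT[S/K]A$,'' and suggest the remaining work is a ``point-set analysis'' identifying the residual homotopy colimit with $\dT[S/K]A$. That is not what is true, and a point-set identification will not close the gap. The object $\bar C(j)$ is a \emph{quotient} of $\ess(j)^K$ (you collapse $\bigcup_{H\in\mathcal G}\ess(j)^{H\cdot K}$), whereas the construction of $\dT[S/K]A$ uses $\ess(j)^K$ itself, without any collapse. These two spaces are genuinely different for each fixed $j$, so $Map_*(\bar C(j),A(j)^K)$ and $Map_*(\ess(j)^K,A(j)^K)$ are not isomorphic, and there is no clean excision identification to be found. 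The correct mechanism is a connectivity estimate: since every $H\in\mathcal G$ generates, together with $K$, a strictly larger subgroup $H\cdot K$ (this uses that $\mathcal G$ is closed and $K\notin\mathcal G$), and since $K$ acts freely on $S$, the collapsed subspace $\bigcup_{H\in\mathcal G}\ess(j)^{H\cdot K}$ has dimension at most $\sum_s j(s)/2|K|$, roughly half the connectivity of $A(j)^K$. Hence the map
\begin{displaymath}
Map_*\bigl(Z(j)^K, S^k\smsh A(j)^K\bigr)_{hW_GK}\to Map_*\bigl(\ess(j)^K, S^k\smsh A(j)^K\bigr)_{hW_GK}
\end{displaymath}
is roughly $\bigl(k-1+\sum_s j(s)/2|K|\bigr)$-connected, and only after applying B\"okstedt's stabilization lemma over the $\hocolim$ does this become an equivalence. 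Without this dimension count and its interaction with the stabilization, the final identification does not go through. You should also make explicit the indexing-category comparison $[\Kd\I(S)]^G\hookrightarrow[\Kd\I(S)]^K\cong\Kd\I(S/K)$ and invoke Lemma~\ref{Bolemma} for cofinality; that step is needed to land on $\dT[S/K]A$ rather than on a hocolim over the wrong category.
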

\begin{proof}
  Since we work stably it suffices to prove the result for every
  discrete set $X = S$.
  Moreover, since fixed points, finite homotopy limits and
  homotopy orbits commute with filtered colimits, it suffices to
  consider the case where $S$ is a finite set.
  Let $(\phi,i)\in [\Kd\I(S)]^G$, write $j=\I({\phi})i\in\I^{S}$  
and consider the restriction map
 $$
Map_*\left(\bigcup_{H\in \mathcal F}\left(\ess(j)\right)^H,A(j)\right)^G \to
Map_*\left(\bigcup_{H \in \mathcal G}\left(\ess(j)\right)^H,A(j)\right)^G.
  $$
The fiber of this fibration is 
$$Map_*\left(Z(j),A(j)\right)^G
$$ 
where 
$$Z(j)=\left(\bigcup_{H\in \mathcal F}\left(\ess(j)\right)^H\right) \left/
\left(\bigcup_{H\in \mathcal G}\left(\ess(j)\right)^H\right) \right. .$$
Apart from the base point, the $G$-orbits of $Z(j)$ are all isomorphic to $G/K$.
Therefore we have isomorphisms
\begin{eqnarray*}
  Map_*\left(Z(j),A(j)\right)^G 
  &\cong& Map_*\left(Z(j)^K,A(j)\right)^{N_G
  K} \\
  &\cong& Map_*\left(Z(j)^K,A(j)^K\right)^{W_G
  K}. 
\end{eqnarray*}
Since $Z(j)^K$ is a free $W_GK$-space, the map 
\begin{displaymath}
  Map_*\left(Z(j)^K,A(j)^K\right)^{hW_G
  K} \to  Map_*\left(Z(j)^K,A(j)^K\right)^{W_G
  K} 
\end{displaymath}
is an equivalence. 
Since $W_GK$ is a finite group and $Z(j)^K$ is a finite
$W_GK$-space, the norm map 
%\cite[Theorem D]{MR1819876}
\begin{displaymath}
  N \colon Map_*(Z(j)^K,A(j)^K)_{hW_G K}
 \to  Map_*\left(Z(j)^K,A(j)^K\right)^{hW_G
  K}
\end{displaymath}
is a stable equivalence, so we can
deduce that 
$$\hocolim[{(\phi,i)\in
    [\Kd\I(S)]^G}] Map_*\left(Z(\I(\phi)i),A(\I(\phi)i)\right)^G
$$ 
is equivalent
to 
$$
\hocolim[{(\phi,i)\in
    [\Kd\I(S)]^G}] Map_*(Z(\I(\phi)i)^K,B(\I(\phi)i)^K)_{hW_GK}.
$$

Note that $H \in \mathcal G$ implies that $K$ is a proper subgroup
of the subgroup $H\cdot K$ of $G$ generated by $H$ and $K$. Since $K$
acts freely on $S$, the space
\begin{displaymath}
  \left[ \bigcup_{H \in \mathcal G} \ess(j)^{H} \right]^K =
  \bigcup_{H \in \mathcal G} \ess(j)^{H\cdot K}
\end{displaymath}
is at most $\sum_{s \in S} j(s)/2|K|$-dimensional and $S^k \wedge
A(j)^K$ is $k -1 + \sum_{s \in S} j(s)/|K|$-connected. Using that
$K$-fixed points commute with quotients by sub $K$-spaces we can conclude that
the map
$$
Map_*\left(Z(j)^K,S^k\smsh A(j)^K\right)_{hW_GK}\to 
Map_*\left(\ess(j)^K,S^k\smsh A(j)^K\right)_{hW_GK}$$
is $k-1+\sum_{s\in S}j(s)/2|K|$-connected.

These considerations are functorial in $(\phi,i)$, and since $X$ is
non-empty, together with Lemma
\ref{Bolemma} and B\"okstedt's Lemma 
(see e.g. \cite[Lemma
  2.5.1]{MR2001e:16014})
they give that
the homotopy fiber 
of the map
$[Z_S^A({\mathcal F})]^G \to [Z_S^A({\mathcal G})]^G  $
 is equivalent to
 \begin{eqnarray*}
\lefteqn{\hocolim[{(\phi,i)\in
    [\Kd\I(S)]^G}] Map_*\left(Z(\I(\phi)i),A(\I(\phi)i)\right)^G} \\
   &\simeq& 
% \hocolim[{(\phi,i)\in
%     [\Kd\I(S)]^E}] Map_*\left(Z(I(\phi)i),A(I(\phi)i)\right)^G \\
%   &\simeq&
\hocolim[{(\phi,i)\in
    [\Kd\I(S)]^G}] Map_*(\ess(\I(\phi)i)^K,A(\I(\phi)i)^K)_{hW_GK}.
% \hocolim[{(\phi,i)\in
%     [\Kd\I(S)]^E}] \left( \ess^{\Ad_{W_GK}} \wedge Map_*(Z(I(\phi)i)^K,A(I(\phi)i)^K)\right)_{W_GK}.
%     \left(\hocolim[{(\phi,i)\in
%       [\Kd\I(S)]^G}]Map_*\left(\ess(I(\phi)i)^K,
%       A(I(\phi)i)^K\right)\right)_{hW_GK}    
 \end{eqnarray*}
The $G$-maps 
\begin{eqnarray*}
  \lefteqn{\hocolim[{(\phi,i)\in
  [\Kd\I(S)]^G}]Map_*\left(\ess(\I(\phi)i)^K,
  A(\I(\phi)i)^K\right)} \\
  &\to&
  \hocolim[{(\phi,i)\in
  [\Kd\I(S)]^K}]Map_*\left(\ess(\I(\phi)i)^K,A(\I(\phi)i)^K\right)
  \\
  &\to&
  \hocolim[{(\phi,i)\in
  \Kd\I(S/K)}]Map_*\left(\ess(\I(\phi)i),A(\I(\phi)i)\right)  
\end{eqnarray*} 
induced by the inclusion $[\Kd\I(S)]^G\subseteq
[\Kd\I(S)]^K$ and the isomorphism $[\Kd\I(S)]^K \cong \Kd\I(S/K)$
are by Lemma \ref{Bolemma} equivalences, and hence we get that the
homotopy fiber is equivalent to  
\begin{displaymath}
  [Z_{S/K}A]_{hW_GK}.
\end{displaymath}
% $$[\ess^{\Ad_{W_GK}} \wedge Map_*(E/K_+,\dT[S/K](A))]_{hW_GK}$$
% if $K$ acts trivially on $E/G_+$, and otherwise it is contractible.
\end{proof}

\begin{cor}\label{cor:httype}
   Let $G$ be a finite group and let $X$ be a non-empty free
   $G$-space. For every closed family $\mathcal F$ of subgroups of 
   $G$ the functor
   $A \mapsto [\dT[X]^A(\mathcal F)]^G$ preserves connectivity of maps of commutative $\ess$-algebras.
%stable equivalences of commutative $\ess$-algebras 
and has values in very special
   $\Gamma$-spaces.  Furthermore we have a natural equivalence $([\dT[X]^A(\mathcal F)]^G)\p\simeq
   [\dT[X]^A(\mathcal F)\p]^G$. 
\end{cor}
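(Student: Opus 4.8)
The plan is to reduce everything to the two building blocks that have already been established: the norm cofiber sequence of Lemma \ref{lem:fundcofseq1}, which identifies the homotopy fiber of a restriction map between $K$-adjacent closed families with a homotopy orbit spectrum $[\dT[{X/K}]A]_{hW_GK}$, and the homotopy-functor properties of the Loday construction, namely Lemma \ref{lem:Zissmash}, Corollary \ref{cor:httype}'s predecessors, and Lemma \ref{prescomp}. First I would induct on the number of conjugacy classes of subgroups in $\mathcal F$. The base case is $\mathcal F = \emptyset$, where $\dT[S]^A(\mathcal F) = *$ and all three assertions are trivial. For the inductive step, choose a closed subfamily $\mathcal G \subseteq \mathcal F$ that is $K$-adjacent to $\mathcal F$ (such a $\mathcal G$ exists: remove a maximal conjugacy class from $\mathcal F$). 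Lemma \ref{lem:fundcofseq1} then gives a homotopy fiber sequence
\begin{displaymath}
  [\dT[{X/K}]A]_{hW_GK} \to [\dT[X]^A(\mathcal F)]^G \to [\dT[X]^A(\mathcal G)]^G,
\end{displaymath}
and it remains to see that each of the three properties passes through such a fiber sequence.

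For the \emph{connectivity} statement: if $f \colon A \to A'$ is an $n$-connected map of commutative $\ess$-algebras, then $\dT[{X/K}]f$ is $n$-connected by Lemma \ref{lem:Zissmash} (which handles the quotient space $X/K$, still a space, via the colimit and diagonal definitions and Lemma \ref{lem:hofunctor}(1)); taking homotopy $W_GK$-orbits preserves connectivity, and by the inductive hypothesis $[\dT[X]^{A}(\mathcal G)]^G \to [\dT[X]^{A'}(\mathcal G)]^G$ is $n$-connected. The five-lemma for homotopy groups applied to the map of fiber sequences then shows $[\dT[X]^A(\mathcal F)]^G \to [\dT[X]^{A'}(\mathcal F)]^G$ is $n$-connected. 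For the \emph{very special} $\Gamma$-space statement: homotopy orbits of a very special $\Gamma$-space are very special, and a $\Gamma$-space sitting in a fiber sequence (equivalently, a cofiber sequence, since we are stable) between very special ones is very special; $[\dT[{X/K}]A]_{hW_GK}$ is very special since $\dT[{X/K}]A$ is (being stably equivalent to a smash power of $A$ when $X/K$ is suitably finite, and a filtered colimit in general), and the middle and right terms are handled by induction.

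For the \emph{$p$-completion} statement: one wants $([\dT[X]^A(\mathcal F)]^G)\p \simeq [\dT[X]^A(\mathcal F)\p]^G$. Here $[\dT[X]^A(\mathcal F)\p]^G$ should be read as the $G$-fixed points of the functor built from $A\p$, or equivalently from the levelwise $p$-completion; the key input is that $p$-completion of spectra commutes with the relevant finite homotopy limits and with homotopy orbits over the finite group $W_GK$ up to the usual arithmetic fracture/Bousfield considerations, together with Lemma \ref{prescomp} which says $F(X)\p \to F(X\p)\p$ is an equivalence for the underlying functor $F$. Applying $p$-completion to the fiber sequence above and using that the fiber term $([\dT[{X/K}]A]_{hW_GK})\p \simeq [\dT[{X/K}]A\p \,]_{hW_GK}$ (homotopy orbits over a finite group commute with $p$-completion, as the orbit spectral sequence has finitely generated-over-$\Z$ input after completion, or more simply because $W_GK$ is finite and $(-)_{hW_GK}$ is a finite homotopy colimit up to filtration), the inductive hypothesis on the base, and the five-lemma again, we conclude.

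The main obstacle I expect is not the inductive bookkeeping but pinning down the precise form of the $p$-completion statement so that it is actually inherited by fiber sequences: one must be careful that $[\dT[X]^A(\mathcal F)\p]^G$ denotes fixed points of the completed functor and that completion genuinely commutes past the categorical fixed points $(-)^G$ appearing inside the definition of $\dT[X]^A(\mathcal F)$ as a homotopy colimit over $\widetilde\I(S)^G$; this is where one leans on the description of the fiber as an \emph{honest} homotopy orbit spectrum (not a fixed-point spectrum) provided by Lemma \ref{lem:fundcofseq1}, which is exactly the point of that lemma — it trades a fixed-point expression for a homotopy-orbit one, and homotopy orbits over a finite group interact well with $p$-completion. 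With that identification in hand the argument is a routine induction, so I would state the $p$-completion claim carefully and then let the five-lemma do the work at each stage.
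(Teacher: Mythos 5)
Your proposal follows essentially the same route as the paper: induct on the partially ordered set of closed families, peel off one conjugacy class via the norm cofiber sequence of Lemma \ref{lem:fundcofseq1}, identify the fiber as a homotopy orbit spectrum, and pass the three properties through the fiber sequence with the five lemma, using Lemma \ref{lem:Zissmash} for connectivity and the commutation of $p$-completion with homotopy orbits over finite groups.

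There is, however, one genuine slip in the inductive step. You write that a $K$-adjacent $\mathcal G \subseteq \mathcal F$ can be obtained by removing a \emph{maximal} conjugacy class from $\mathcal F$, but this is the wrong end of the poset. Closed families are upward-closed: if $H \in \mathcal F$ and $H \le gKg^{-1}$ then $K \in \mathcal F$. Thus if you delete the conjugacy class of a maximal subgroup $K$ from $\mathcal F$, any $H \in \mathcal F$ strictly subconjugate to $K$ still lies in the remainder and witnesses a failure of closedness (e.g.\ $G = C_4$, $\mathcal F$ the family of all subgroups: removing $[C_4]$ leaves $\{e, C_2\}$, which is not closed). For Lemma \ref{lem:fundcofseq1} to apply you need \emph{both} $\mathcal F$ and $\mathcal G$ to be closed, so the class to remove must be that of a \emph{minimal} subgroup $K$ of $\mathcal F$: then nothing in $\mathcal G = \mathcal F \setminus [K]$ is subconjugate to $K$, and $\mathcal G$ is again a closed family. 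This is exactly the choice the paper makes. With that correction, the rest of your argument (five lemma for connectivity, very special $\Gamma$-spaces, and $p$-completion commuting with finite homotopy orbits) goes through and matches the paper's proof.
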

\begin{proof}
  We make induction on the partially ordered set of closed
  families ordered by inclusion.
  If $\mathcal F$ is empty, then $[\dT[X](\mathcal F)]^G$ is
  contractible, and thus the result holds. 
% Suppose that $(\mathcal
%   F_{\alpha})_{\alpha \in I}$ is a chain of closed families and that the result holds
%   for each $\mathcal F_{\alpha}$. Then
%   \begin{displaymath}
%     \dT[X]^A(\bigcup_{\alpha \in I}\mathcal F_{\alpha}) \simeq
%     \holim[\alpha \in I]   \dT[X]^A(\mathcal F_{\alpha}),
%   \end{displaymath}
%   and thus the result also holds for $\bigcup_{\alpha \in I}\mathcal
%   F_{\alpha}$. Suppose that $\mathcal F$ is maximal among the closed
%   families for which the result holds. If $\mathcal F$ contains the
%   trivial subgroup, then $\mathcal F$ contains every subgroup of
%   $E$. Otherwise
  Otherwise we may choose a minimal
  subgroup $K$ in $\mathcal F$ and a closed family $\mathcal G
  \subseteq \mathcal F$ of
  subgroups of $G$ so that $\mathcal G$ is $K$-adjacent to $\mathcal
  F$. By Lemma \ref{lem:fundcofseq1}
  there is a cofibration sequence
  \begin{displaymath}
    [\dT[X/K](A)]_{hW_GK} \to [\dT[X]^A(\mathcal F)]^G\to [\dT[X]^A(\mathcal G)]^G.
  \end{displaymath}
  By Lemma \ref{lem:Zissmash} the functor $A \mapsto [\dT[X/K](A)]$ preserves
 connectivity,
 %stable equivalences, 
and thus also the functor 
  $A \mapsto
  [\dT[X/K](A)]_{hW_GK}$ preserves connectivity. Together with the inductive
  assumption and the five lemma, this implies that the functor $A \mapsto
  [\dT[X]^A(\mathcal F)]^G$ preserves connectivity.  Since completion
commutes with homotopy orbits the 
second statement is proved in the same way. 
\end{proof}

\begin{cor}\label{cor:R0conn}
  Let $\mathcal G \subseteq \mathcal F$ be closed families of
  subgroups of a finite group $G$. For every commutative $\ess$-algebra $A$ and non-empty free
  $G$-space $X$ the restriction map $[\dT[X]^A(\mathcal F)]^G \to
  [\dT[X]^A(\mathcal G)]^G$ is $0$-connected.
\end{cor}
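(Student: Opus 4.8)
The plan is to reduce to the $K$-adjacent case and then invoke the norm cofiber sequence of Lemma \ref{lem:fundcofseq1}. First I would observe that any inclusion $\mathcal G \subseteq \mathcal F$ of closed families can be refined into a finite chain
\begin{displaymath}
  \mathcal G = \mathcal F_0 \subseteq \mathcal F_1 \subseteq \dots \subseteq \mathcal F_m = \mathcal F
\end{displaymath}
in which each $\mathcal F_{i-1} \subseteq \mathcal F_i$ is $K_i$-adjacent for some subgroup $K_i$ of $G$; this is done by adjoining one conjugacy class of subgroups at a time, always choosing a conjugacy class that is minimal among those in $\mathcal F \setminus \mathcal F_{i-1}$, so that the enlarged collection is still a closed family. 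Since the restriction map $[\dT[X]^A(\mathcal F)]^G \to [\dT[X]^A(\mathcal G)]^G$ factors as the composite of the restriction maps along this chain, and a composite of $0$-connected maps is $0$-connected, it suffices to treat a single $K$-adjacent step.

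Now suppose $\mathcal G \subseteq \mathcal F$ are $K$-adjacent. By Lemma \ref{lem:fundcofseq1} the homotopy fiber of
\begin{displaymath}
  \mathrm{res} \colon [\dT[X]^A(\mathcal F)]^G \to [\dT[X]^A(\mathcal G)]^G
\end{displaymath}
is equivalent to $[\dT[{X/K}]A]_{hW_GK}$. Thus the cofiber of $\mathrm{res}$ is the suspension of this homotopy orbit spectrum, so to see that $\mathrm{res}$ is $0$-connected it is enough to check that $[\dT[{X/K}]A]_{hW_GK}$ is $(-1)$-connected, i.e.\ connective. Since $A$ is connective, Lemma \ref{lem:Zissmash} gives that $\dT[{X/K}]A$ is pointwise connective (in fact its associated spectrum is connective); homotopy orbits over the finite group $W_GK$ preserve connectivity, being built out of coproducts, smash products with $(W_GK)_+$, and realizations of simplicial objects, none of which lowers connectivity. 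Hence $[\dT[{X/K}]A]_{hW_GK}$ is connective, its suspension is $0$-connected, and therefore $\mathrm{res}$ is $0$-connected. (Alternatively one can run the same induction as in the proof of Corollary \ref{cor:httype}, using the five lemma on the long exact sequence of homotopy groups, but the direct argument via the connectivity of the homotopy fiber is cleanest.)

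The only mild subtlety — and the step I would be most careful about — is the bookkeeping in the refinement of $\mathcal G \subseteq \mathcal F$ into a chain of $K$-adjacent pairs: one must verify that adding a minimal conjugacy class of subgroups to a closed family yields again a closed family, which is exactly the statement that if $H$ is minimal in $\mathcal F \setminus \mathcal F_{i-1}$ and $H \le g K g^{-1}$ then $K$ already lies in $\mathcal F_i = \mathcal F_{i-1} \cup (\text{conjugates of } H)$; this follows from minimality together with the closure property of $\mathcal F$ itself. Everything else is a formal consequence of Lemmas \ref{lem:fundcofseq1} and \ref{lem:Zissmash} and the elementary behavior of homotopy orbits with respect to connectivity.
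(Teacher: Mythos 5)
Your proposal follows the same strategy as the paper's one-line proof: refine the inclusion $\mathcal G \subseteq \mathcal F$ into a chain of $K$-adjacent closed families, apply Lemma \ref{lem:fundcofseq1} at each step, and observe that each homotopy orbit fiber is connective so that each restriction map in the chain is $0$-connected. Your filling in of the last two points (Lemma \ref{lem:Zissmash} plus the fact that homotopy orbits over a finite group preserve connectivity) is exactly what the paper leaves implicit, and that part is fine.

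There is, however, a small slip in precisely the bookkeeping step you single out as the subtle one. A closed family is by definition \emph{upward}-closed under subconjugacy: $H \in \mathcal F$ and $H \le gKg^{-1}$ force $K \in \mathcal F$. If you build the chain bottom-up by adjoining one conjugacy class at a time, you must adjoin a \emph{maximal} (not minimal) conjugacy class $[K_i]$ in $\mathcal F \setminus \mathcal F_{i-1}$: if $[K_i]$ is merely minimal there may well be some $[L] \in \mathcal F \setminus \mathcal F_{i-1}$ with $K_i$ strictly subconjugate to $L$, and then $\mathcal F_{i-1} \cup [K_i]$ is not upward-closed, so it is not a closed family. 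Equivalently you may run the chain top-down from $\mathcal F$, at each stage deleting a conjugacy class that is minimal in $\mathcal G_i \setminus \mathcal G$ (this is the variant used in the proof of Corollary \ref{cor:httype}); that works because $\mathcal G$ is upward-closed, so minimality in $\mathcal G_i \setminus \mathcal G$ forces minimality in $\mathcal G_i$, which is what deletion needs in order to preserve closedness. With ``minimal'' replaced by ``maximal'' in the bottom-up version, your argument is correct and matches the paper's.
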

\begin{proof}
  Choose adjacent closed families $\mathcal G_1 \subseteq \dots
  \subseteq \mathcal G_n$ of subgroups of $G$ with $\mathcal G_1 =
  \mathcal G$ and $\mathcal G_n =
  \mathcal F$ and apply Lemma \ref{lem:fundcofseq1}
\end{proof}

\subsection{The restriction map}
\label{sec:resmap}

Given a normal subgroup $N$ of $G$ we
let $\diag^N_S \colon S\to S/N$ be the projection and we let $\mathcal
F$ denote the closed family consisting of all subgroups of $G$
containing $N$.
Notice that Corollary \ref{fixfix} implies that the induced functor
$(\diag^N_S)^* \colon \Kd \I(S/N) \to \Kd \I(S)$ induces an isomorphism
$(\diag^N_S)^* \colon \Kd \I(S/N) \xto \cong \Kd \I(S)^N$.
\begin{lemma}
  Let $N$ be a normal subgroup of a finite group $G$ and let $S$ be a
  finite $G$-set. For every commutative $\ess$-algebra $A$ the
  $G/N$-spaces $[\dT[S]^A(\mathcal F)]^N$ and 
  $$\hocolim[\Kd\I({S/N})]\left[G^A_S(\mathcal F) \circ
  r_S \circ (\diag^N_S)^* \right]^N$$ are isomorphic.
\end{lemma}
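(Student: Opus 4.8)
The statement asserts that passing to $N$-fixed points commutes (up to isomorphism of $G/N$-spaces) with the homotopy-colimit definition of $\dT[S]^A(\mathcal F)$, once one reindexes the diagram along $(\diag^N_S)^*\colon\Kd\I(S/N)\to\Kd\I(S)$. My plan is to combine three already-established facts: the identification $\Kd\I(S)^N\cong\Kd\I(S/N)$ (a consequence of Corollary \ref{fixfix}, recalled just before the lemma); the fact that homotopy colimits over a fixed category are computed degreewise as coproducts, so that $N$-fixed points — being a limit — commute with the homotopy colimit when $N$ acts compatibly on both the indexing category and the diagram; and Lemma \ref{resnormal}, which computes the $N$-fixed points of the value $G^A_S(\mathcal F)(j)$ when $\mathcal F$ is the family of subgroups containing $N$.

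\emph{Step 1.} Recall that $\dT[S]^A(\mathcal F)=\hocolim[\Kd\I(S)]G^A_S(\mathcal F)\circ r_S$, and that $G$ acts on this spectrum through its (functorial) action on $\Kd\I(S)$ and on the transformation $G^A_S(\mathcal F)$. Restrict attention to the normal subgroup $N\le G$. Since homotopy colimits are formed degreewise as wedges indexed by strings of composable morphisms in $\Kd\I(S)$, and since $N$-fixed points commute with these (finite) limits and with the realization, we get a natural isomorphism of $G/N$-spaces
\begin{displaymath}
  \bigl(\hocolim[\Kd\I(S)]G^A_S(\mathcal F)\circ r_S\bigr)^N
  \;\cong\;
  \hocolim[\Kd\I(S)^N]\bigl(G^A_S(\mathcal F)\circ r_S\bigr)^N,
\end{displaymath}
exactly as in the proof of Lemma \ref{Bolemma}, where the analogous identity
$(\hocolim[\Kd\I(S)]\I\circ r_S)^G\cong\hocolim[\Kd\I(S)^G]\I^G\circ r_S^G$ is used. (One must check that the $N$-action on the string of morphisms defining a simplex has fixed points precisely the strings in $\Kd\I(S)^N$, and that $r_S$ is $N$-equivariant so that $r_S^N$ makes sense; both are formal.)

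\emph{Step 2.} Transport the indexing category along the isomorphism $(\diag^N_S)^*\colon\Kd\I(S/N)\xto{\cong}\Kd\I(S)^N$ recalled before the lemma. Under this isomorphism the diagram $(G^A_S(\mathcal F)\circ r_S)^N$ on $\Kd\I(S)^N$ corresponds to $(G^A_S(\mathcal F)\circ r_S\circ(\diag^N_S)^*)^N$ on $\Kd\I(S/N)$. Since an isomorphism of indexing categories induces an isomorphism of homotopy colimits of corresponding diagrams, this yields
\begin{displaymath}
  \hocolim[\Kd\I(S)^N]\bigl(G^A_S(\mathcal F)\circ r_S\bigr)^N
  \;\cong\;
  \hocolim[\Kd\I(S/N)]\bigl[G^A_S(\mathcal F)\circ r_S\circ(\diag^N_S)^*\bigr]^N,
\end{displaymath}
which is precisely the spectrum in the statement. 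Composing with Step 1 gives the desired isomorphism; naturality in $A$ and $G/N$-equivariance are inherited from the constructions. Note that Lemma \ref{resnormal} is what guarantees the fixed-point diagram is the "right" one — it identifies the value $(G^A_S(\mathcal F)(j))^N$ with $Map_*(S(j)^N,A(j)^N)$ — but for the bare statement of this lemma one does not even need that identification; it is only needed to recognize the answer as a Loday-type functor downstream (compare the use of Lemma \ref{resnormal} inside the proof of Lemma \ref{lem:fundcofseq1}).

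\emph{Main obstacle.} The only genuinely non-formal point is the commutation of $N$-fixed points with the homotopy colimit in Step 1: one must verify that $N$ acts on $\Kd\I(S)$ (not merely on $\I(S)$, where the action would be visible, but on Street's rectification $\Kd\I(S)=\widetilde\I(S)$) in a way compatible with the transformation $r_S$ and the functor $G^A_S(\mathcal F)$, so that taking the wedge over $N$-fixed strings of simplices really does compute the $N$-fixed subspace. This is the reason the category $V$ of spans was introduced at all — it is exactly what makes $\widetilde\I(S)^N\cong\widetilde\I(S/N)$ hold on the nose (Corollary \ref{fixfix}) rather than merely up to equivalence — so the argument here is the same bookkeeping already carried out in Lemma \ref{Bolemma}, and I would simply cite it. I expect the write-up to be two or three lines invoking Lemma \ref{Bolemma}'s proof technique and the isomorphism $\Kd\I(S)^N\cong\Kd\I(S/N)$.
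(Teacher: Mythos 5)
Your proposal is correct and follows essentially the same two-step argument as the paper: commute $N$-fixed points with the homotopy colimit, then reindex along the isomorphism $(\diag^N_S)^*\colon\Kd\I(S/N)\xto{\cong}\Kd\I(S)^N$ furnished by Corollary \ref{fixfix}. Your side remarks — that Lemma \ref{resnormal} is not needed for the bare statement, and that the commutation step is the same bookkeeping as in Lemma \ref{Bolemma} — match the paper's (very terse) presentation.
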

\begin{proof}
  The $G/N$-space $[\dT[S]^A(\mathcal F)]^N$ is defined to be the
  $N$-fix points of the
  homotopy colimit 
  $\hocolim[\Kd\I(S)]G^A_S(\mathcal F) \circ
    r_S.$ 
  Homotopy colimits and fixed points commute in the
  sense that this $N$-fix point space is isomorphic to the homotopy
  colimit $\hocolim[\Kd\I({S})^N]\left[G^A_S(\mathcal F)\circ
  r_S\right]^N$. The result now follows from the fact that
  $(\diag^N_S)^* \colon \Kd \I(S/N) \xto \cong \Kd \I(S)^N$ is an isomorphism.
\end{proof}
% Hence, if $S$ is a finite $G$-set, we have isomorphisms of $G/N$-spaces
% of the form
% \begin{eqnarray*}
%   [\dT[S]^A(\mathcal F)]^N
%   &\cong&\left[\hocolim[\Kd\I(S)]G^A_S(\mathcal F) \circ r_S \right]^N
%   \\ &\cong&
% \hocolim[\Kd\I({S})^N]\left[G^A_S(\mathcal F)\circ
%   r_S\right]^N 
%   \cong
% \hocolim[\Kd\I({S/N})]\left[G^A_S(\mathcal F) \circ
%   r_S \circ (\diag^N_S)^* \right]^N
% \end{eqnarray*}

Let $(\phi,i) \in \widetilde F(S/N)$ and let $j = \I((\diag^N_S)^*
\phi)(i)$. The isomorphisms $S(j)^N \cong S(I(\phi)(i))$ and $A(j)^N
\cong A(I(\phi)(i))$ induce a
natural isomorphism of the
form
\begin{eqnarray*}
  [(G_S^A(\mathcal F) \circ r_S \circ (\diag^N_S)^*)(\phi,i)]^G &=&
  Map_*(\cup_{N \le H} S(j)^H,A(j))^G \\
  &=& Map_*(\cup_{N \le H} S(j)^H,\cup_{N \le H} A(j)^H)^G \\
  &=& Map_*(S(j)^N,A(j)^N)^{G/N} \\
  &\cong& [(G^A_{S/N} \circ r_{S/N})(\phi,i)]^{G/N}.
\end{eqnarray*}
%where $\diag^G\colon S\to S/G$ is the projection.
%$\text{\diag}^G_H$ is the diagonal $\I^{S/G}\to [\I^{S/H}]^{G/H}$.
% Given a subgroup $H$ of $G$ we let $(\diag^G_H)^*$ denote the diagonal
% functor $(\diag^G_H)^* \colon \Kd I(S/G) \to \Kd I(S/H)$.
% For every subgroup $H\subseteq G$ 
Thus the inclusion of $\mathcal F$ in the family of all subgroups of
$G$ induces a natural
transformation of functors of the form
\begin{equation}
  \label{eq:label}
  \left[G^A_S\circ r_S \circ (\diag^N_S)^*\right]^G \Rightarrow
  \left[G^A_S(\mathcal F) \circ r_S \circ (\diag^N_S)^*\right]^G \cong
  \left[G^A_{S/N}\circ r_{S/N} \right]^{G/N}  
\end{equation}
% ($p^G_{S/H}\colon S/H\to S/G$ is the projection and $(\diag^G_{S/H})^* \colon
% \Kd I(S/G) \to \Kd I(S/H)$ is an isomorphism of categories) 
of functors $\Kd\I({S/N})\to \gs$
given on $(\phi,i)$ by restricting to $N$-fixed points. 
This induces a modification, and hence a natural map
\begin{displaymath}
  {R^G_N \colon  \left[\hocolim[\Kd\I(S)][G^A_S \circ
    r_S]\right]^G \to} 
\left[\hocolim[\Kd\I({S/N})]G^A_{S/N}\circ r_{S/N}
  \right]^{G/N},
\end{displaymath}
that is, a map
\begin{displaymath}
  R^G_N \colon [\dT[S](A)]^G  \to [\dT[S/N](A)]^{G/N}.
\end{displaymath}

If $G$ is a finite group and $X$ is a simplicial $G$-set, let
$\mathcal F^X_G$ be the filtered category 
of finite $G$-subspaces of $X$ and inclusions.
Notice that $\mathcal F^X_G\subseteq F^X_{\{1\}}$ is right cofinal,
and so colimits over $F^X_G$ and $F^X_{\{1\}}$ are isomorphic, and the
isomorphism is $G$-equivariant. 
% The map $\rho^G_S$ induces a map
% $r^H_X \colon \left[\dT[X](A)\right]^G \to \left[\dT[{X/H}](A)\right]$
% given by the compositon
% \begin{displaymath}
%   \left[\dT[X](A)\right]^G \to \left[\dT[X](A)\right]^H \cong \colim[S\in\mathcal
%   F^X_H] [Z_S(A)]^H \to \colim[S\in\mathcal F^X_H] [Z_{S/H}(A)]
% \cong 
% \left[\dT[{X/H}](A)\right].
% \end{displaymath}
% % \begin{align*}
% %   r^H_X \colon \left[\dT[X](A)\right]^G\cong
% % &\colim[S\in\mathcal F^X_G] [Z_S(A)]^G \\
% % % \left[\hocolim[\Kd\I(S)]G^A_S \circ r_S\right]^G\\
% % \to 
% % &\colim[S\in\mathcal F^X_G]
% % [Z_{S/H}(A)]
% % %\hocolim[\Kd\I({S/G})]\left[G^A_{S/H}\circ r_{S/H} \circ (\diag^G_{S/H})^*\right]^{G/H}\\
% % % \cong 
% % % &\colim[U\in\mathcal F^{X/H}_{G/H}]
% % % [Z_U(A)]
% % %\hocolim[\Kd\I({U})]\left[G^A_{U}\circ r_U \circ (\diag)^{G/H}_{U})^*\right]^{G/H}
% % \cong 
% % &\left[\dT[{X/H}](A)\right].
% % \end{align*}

% If $\mathcal F$ is a family of subgroups of $G$ closed under
% conjugation, then we can consider $\mathcal F$ as a partially
% ordered set under inclusion, and the maps $Z_{X/K}(A)
% \subseteq Z_{X/H}(A)$ for $K \subseteq H$ in $\mathcal F$ form a
% functor $K \mapsto Z_{X/K}(A)$
% from $\mathcal F$ to spectra.
% Moreover $G$ acts on $\mathcal F$ by conjugation $H \mapsto gHg^{-1}$
% and thus on the homotopy limit
% $$\left[\holim[H \in \mathcal F] Z_{X/H}(A) \right].$$
% Note that if $\mathcal F$ is the family consisting of all the subgroups in
% $G$ containing a
% normal subgroup $H$ in $G$ then the $K$-fixed poset $\mathcal F^K$
% has $H$ as initial object for every subgroup $K$ of $G$. 
  \begin{Def}
  Let $A$ be a commutative $\ess$-algebra, $G$ a finite group acting
  on a simplicial set $X$ and $N$ a normal subgroup of $G$.
  The {\em restriction map} 
  \begin{displaymath}
    R^G_{N} \colon \left[\dT[X](A)\right]^G \to \left[\dT[X/N](A)\right]^{G/N}
  \end{displaymath}
  is the composite
  \begin{eqnarray*}
    \left[\dT[X](A)\right]^G &\cong& \colim[S \in \mathcal F^X_G]
    [\dT[S](A)]^G \\
    &\to& \colim[S \in \mathcal F^{X}_{G}]
    [\dT[{S/N}](A)]^{G/N} \cong \colim[U \in \mathcal F^{X/N}_{G/N}]
    [\dT[{U}](A)]^{G/N} \cong \left[\dT[X/N](A)\right]^{G/N}.
  \end{eqnarray*}
  \end{Def}
The restriction map $R^G_N$ is natural in the commutative $\ess$-algebra
$A$ and in the $G$-space $X$. Moreover Corollary \ref{cor:R0conn}
implies that
% the restriction map
$R^G_N$ is $0$-connected.
The following lemma is direct from the definition, but is important
for future reference:
\begin{lemma}\label{lem:RF=FR}
  Let $A$ be a commutative $\ess$-algebra, $X$ a $G$-space with $G$ a
  finite group, and let
  $H\subseteq N\subseteq G$ be normal subgroups.  Then
  $$R^G_N=R^{G/H}_{N/H}R^G_{H}.$$  If $F^G_N\colon
  [\dT[X](A)]^G\subseteq [\dT[X](A)]^N$ denotes the inclusion of fixed
  points, then we also have that 
$$
  \begin{CD}
    [\dT[X](A)]^G@>{R^G_H}>>[\dT[X/H](A)]^{G/H}\\
    @V{F^G_N}VV@V{F^{G/H}_{N/H}}VV\\
    [\dT[X](A)]^N@>{R^N_H}>>[\dT[X/H](A)]^{N/H}
  \end{CD}
$$
commutes.
%$$F^{G/H}_{N/H}R^G_H=R^N_HF^G_N. $$
\end{lemma}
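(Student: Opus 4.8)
The plan is to unwind the definition of the restriction map and reduce both assertions to the level of finite $G$-sets. The maps $R^G_N$, $R^G_H$, $R^{G/H}_{N/H}$ and $R^N_H$ are all defined as filtered colimits of the corresponding maps on finite $G$-subspaces, $X/N$ is canonically $G/N$-equivariantly isomorphic to $(X/H)/(N/H)$, and the filtered category $\mathcal F^{X/N}_{G/N}$ is identified with $\mathcal F^{(X/H)/(N/H)}_{(G/H)/(N/H)}$; so it suffices to prove both statements with $X$ replaced by an arbitrary finite $G$-set $S$.

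For the composition identity, I would use that $\diag^N_S \colon S \to S/N$ factors as $S \xto{\diag^H_S} S/H \xto{\diag^{N/H}_{S/H}} (S/H)/(N/H) \cong S/N$, whence $(\diag^N_S)^* = (\diag^H_S)^* \circ (\diag^{N/H}_{S/H})^*$ as functors, compatibly with the isomorphisms $\Kd\I(S/N) \cong \Kd\I(S)^N$ and $\Kd\I(S/H) \cong \Kd\I(S)^H$ of Corollary~\ref{fixfix}. Since $V^N = (V^H)^{N/H}$ naturally for any $G$-space $V$, and this is precisely the identification of fixed points used to build the natural transformation~(\ref{eq:label}), the transformation defining $R^G_N$ --- restriction of $G^A_S(j)$ to $N$-fixed points on each $(\phi,i)$ --- is the composite of the transformation defining $R^G_H$ (restriction to $H$-fixed points) with the one defining $R^{G/H}_{N/H}$ (restriction to $N/H$-fixed points of the result). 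The functoriality of homotopy colimits recalled in Section~\ref{funhocolim} then gives $R^G_N = R^{G/H}_{N/H} R^G_H$.

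For the square, note that $R^G_H$ and $R^N_H$ are induced, via homotopy colimit functoriality, by one and the same natural transformation~(\ref{eq:label}) taken with $H$ in the role of $N$; this transformation does not involve the ambient group, and $R^G_H$ versus $R^N_H$ differ only in which fixed points --- those of the residual $G/H$-action, resp. the residual $N/H$-action --- one retains on the indexing categories and on the $\Gamma$-spaces. The inclusions $F^G_N$ and $F^{G/H}_{N/H}$ are the matching inclusions of fixed-point subspaces (from the $G$-fixed part into the $N$-fixed part, and likewise after quotienting by $H$). Hence for $x$ in $[\dT[S](A)]^G$ both composites of the square are computed by viewing $x$ as an $N$-fixed point and applying the map induced by~(\ref{eq:label}), landing in the $N/H$-fixed part, so the square commutes strictly.

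I expect no genuine obstacle; the lemma really is bookkeeping. The one point that needs care throughout is that the chain of canonical isomorphisms $\Kd\I(S/N) \cong \Kd\I(S)^N \cong (\Kd\I(S/H))^{N/H} \cong \Kd\I((S/H)/(N/H))$ from Corollary~\ref{fixfix}, the factorization of $\diag^N_S$, and the identity $V^N = (V^H)^{N/H}$ are strictly (not merely coherently) compatible, so that one obtains literal equalities of maps rather than homotopies --- which is why the paper can call this lemma ``direct from the definition''.
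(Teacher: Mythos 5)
Your proof is correct and is precisely the definitional unwinding the paper has in mind when it calls the lemma ``direct from the definition'': reduce to finite $G$-sets via the colimit description, factor $\diag^N_S$ through $\diag^H_S$, use $V^N=(V^H)^{N/H}$ to see that restricting to $N$-fixed points in~(\ref{eq:label}) is the two-step restriction, and note that $R^G_H$ and $R^N_H$ arise from the same natural transformation with only the retained fixed points differing. No gap; the care you take about strict compatibility of the isomorphisms from Corollary~\ref{fixfix} is exactly the point that needs to be checked.
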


Most importantly for our applications, we have the {\em fundamental cofiber sequence}:

\begin{lemma}\label{lem:funcofseq}
  Let $G$ be a finite abelian group,
  $X$ a non-empty  free $G$-space and $A$ a commutative $\ess$-algebra. The
  homotopy fibre of the map
  \begin{displaymath}
    [\dT[X](A)]^G \to \holim[\trivgroup \ne H \le G] [\dT[{X/H}](A)]^{G/H}
  \end{displaymath}
  induced by the restriction maps is equivalent to the homotopy orbit
  spectrum $[\dT[S](A)]_{hG}$.
\end{lemma}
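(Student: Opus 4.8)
The plan is to present the map in the statement as a restriction map between closed-family versions of the Loday functor, and then read off the homotopy fibre from Lemma~\ref{lem:fundcofseq1}. Since fixed points, finite homotopy limits, homotopy orbits and the restriction maps all commute with the filtered colimit $\dT[X](A)=\colim[S]\dT[S](A)$ over the finite $G$-invariant subsets $S\subseteq X$, and since we work stably, it suffices to take $X=S$ a non-empty finite free $G$-set. Let $\mathcal F$ be the family of all subgroups of $G$ and $\mathcal P$ the family of all non-trivial subgroups of $G$. Both are closed families, and because $G$ is abelian the complement $\mathcal F\setminus\mathcal P=\{\trivgroup\}$ is precisely the conjugacy class of the trivial subgroup, so $\mathcal P$ and $\mathcal F$ are $\trivgroup$-adjacent with $W_G\trivgroup=G$. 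Using Lemma~\ref{resnormal} to identify $[\dT[S]^A(\mathcal F)]^G=[\dT[S](A)]^G$, Lemma~\ref{lem:fundcofseq1} shows that the homotopy fibre of
\[
\mathrm{res}\colon [\dT[S](A)]^G\longrightarrow[\dT[S]^A(\mathcal P)]^G
\]
is $[\dT[S/\trivgroup](A)]_{hW_G\trivgroup}=[\dT[S](A)]_{hG}$.

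It then remains to identify the target $[\dT[S]^A(\mathcal P)]^G$ with $\holim[\trivgroup\ne H\le G][\dT[S/H](A)]^{G/H}$ so that $\mathrm{res}$ becomes the map induced by the restriction maps. For a non-trivial subgroup $H$ let $\mathcal F_H$ be the closed family of subgroups containing $H$; then $G^A_S(\mathcal F_H)(j)=Map_*(S(j)^H,A(j))$ since $S(j)^{H'}\subseteq S(j)^H$ for $H'\supseteq H$, while $G^A_S(\mathcal P)(j)=Map_*(\bigcup_{\trivgroup\ne H}S(j)^H,A(j))$. As $G$ is finite it has only finitely many subgroups, and $S(j)^{H_1}\cap S(j)^{H_2}=S(j)^{\langle H_1,H_2\rangle}$ with $\langle H_1,H_2\rangle$ again non-trivial, so the subcomplexes $S(j)^H\subseteq S(j)$ form a finite sub-meet-semilattice; a standard Mayer--Vietoris argument (homotopy colimit of a union of subcomplexes) then yields a natural equivalence
\[
G^A_S(\mathcal P)(j)\ \simeq\ \holim[\trivgroup\ne H\le G]G^A_S(\mathcal F_H)(j)
\]
whose structure maps are the restrictions $G^A_S(\mathcal F_H)\to G^A_S(\mathcal F_{H'})$ for $H\le H'$.

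Now precompose with $r_S\circ(\diag^G_S)^*$, take $G$-fixed points, pass to the homotopy colimit over $\Kd\I(S/G)\cong\Kd\I(S)^G$ (by Corollary~\ref{fixfix}, together with the commutation of homotopy colimits with these fixed points as in the proofs of Lemmas~\ref{Bolemma} and~\ref{lem:fundcofseq1}), and interchange the finite homotopy limit with the homotopy colimit (legitimate since we work stably):
\[
[\dT[S]^A(\mathcal P)]^G\ \simeq\ \holim[\trivgroup\ne H\le G]\ \hocolim[\Kd\I(S/G)]\ [G^A_S(\mathcal F_H)\circ r_S\circ(\diag^G_S)^*]^G.
\]
A $G$-equivariant map out of the $H$-fixed subcomplex $S(j)^H$ lands in $A(j)^H$, so each inner term is, via the identifications of Section~\ref{sec:resmap}, naturally isomorphic to $[G^A_{S/H}\circ r_{S/H}]^{G/H}$ — this is exactly the isomorphism through which the transformation~(\ref{eq:label}), the identification $[\dT[S]^A(\mathcal F_H)]^G\cong[\dT[S/H](A)]^{G/H}$, and the restriction map $R^G_H$ are defined. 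Using Lemma~\ref{lem:RF=FR} to see that the structure maps above match the maps $R^{G/H}_{H'/H}$, the inner homotopy colimit is $[\dT[S/H](A)]^{G/H}$ and the outer homotopy limit is exactly the one in the statement. Combining this with the first paragraph completes the proof.

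The main obstacle is bookkeeping rather than ideas: one must check that the Mayer--Vietoris decomposition of the $\Gamma$-space-valued functor $G^A_S(\mathcal P)$ is natural for the two-variable functoriality (in $j\in\I^S$ and in the finite $G$-set $S$, via Street's construction) that enters the definition of the Loday functor, that the resulting transition maps in the homotopy limit coincide on the nose with the restriction maps $R^{G/H}_{H'/H}$ of Section~\ref{sec:resmap}, and that the finite homotopy limit genuinely commutes with the relevant homotopy colimit (this is where stability is used). These verifications are lengthy but routine, in the same spirit as the proof of Lemma~\ref{lem:fundcofseq1}, and I would relegate them to explicit diagram chases.
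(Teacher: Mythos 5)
Your proposal is correct and follows essentially the same route as the paper: apply Lemma~\ref{lem:fundcofseq1} to the $\trivgroup$-adjacent pair $(\text{all subgroups},\,\mathcal P)$ to get the fibre $[\dT[X]A]_{hG}$, and then identify $[\dT[X]^A(\mathcal P)]^G$ with the displayed homotopy limit by writing $\bigcup_{\trivgroup\ne H}S(j)^H$ as a homotopy colimit and commuting the resulting finite $\holim$ past the B\"okstedt-style $\hocolim$. The only presentational difference is that you make the Mayer--Vietoris decomposition of the union explicit and appeal loosely to ``working stably'' for the $\holim$--$\hocolim$ interchange, whereas the paper packages both steps into a single equivalence justified by Lemma~\ref{Bolemma} (B\"okstedt's lemma) together with finiteness of $G$; also note that the $\trivgroup$-adjacency itself needs no abelian hypothesis — abelianness is used only so that $X/H$ and $R^G_H$ make sense for every subgroup $H$.
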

\begin{proof}
  Let $\mathcal F = \{H \colon \trivgroup \ne H \le G\}$. In order to apply Lemma \ref{lem:fundcofseq1} it suffices 
% to show that there is
%   an equivalence
%   \begin{displaymath}
%     \gamma \colon [Z_S^A({\mathcal F})]^G  \xto \simeq \left[\holim[\trivgroup \neq H\le G] \dT[{S/H}]A\right]^{G}
%   \end{displaymath}
%   with the property that the composition of $\gamma$ and the
%   restriction map 
%   \begin{displaymath}
%     [Z_S(A)]^G = [Z_S^A({\emptyset})]^G \to [Z_S^A({\mathcal F})]^G  
%   \end{displaymath}
%   is equal to the map
%   $$[\dT[S](A)]^G\to \left[\holim[\trivgroup \neq H\le G]
%   \dT[{S/H}]A\right]^{G}$$
% induced by restriction maps of the form $R^G_H$. 
to show that the space
$$
[\dT[X](\mathcal F)]^G \cong \hocolim[{(\phi,i)\in [\Kd\I(S)]^G}]Map_*\left[\bigcup_{\trivgroup \neq H\le G}\left(\ess(\I(\phi)i)\right)^H,A(\I(\phi)i)\right]^G
$$
is equivalent to
$$[\holim[\trivgroup \neq H\le G]\dT[{S/H}](A)]^{G} = \holim[\trivgroup \ne H \le G] [\dT[{X/H}](A)]^{G/H}.$$
%and that the induced maps are the restrictions.

First we notice that the natural map 
\begin{multline*}
\hocolim[{(\phi,i)\in [\Kd\I(S)]^G}]Map_*\left[\bigcup_{\trivgroup \neq H\le G}\left(\ess(\I(\phi)i)\right)^H,A(\I(\phi)i)\right]^G\\
\to 
\left[ \holim[\trivgroup \neq H\le G]\hocolim[{(\phi,i)\in [\Kd\I(S)]^G}]Map_*\left(\left(\ess(\I(\phi)i)\right)^H,A(\I(\phi)i)\right)\right]^G\\
\cong
\left[\holim[\trivgroup \neq H\le G]\hocolim[{(\phi,i)\in [\Kd\I(S)]^G}]Map_*\left(\left(\ess(\I(\phi)i)\right)^H,\left(A(\I(\phi)i)\right)^H\right)\right]^{G}
\end{multline*}
is an equivalence (again by Lemma \ref{Bolemma}, and since $G$ is
finite).  The last term is isomorphic to 
$$[\holim[\trivgroup \neq H\le G]\hocolim[{[\Kd\I({S/G})]}]G^A_{S/H} \circ
  r_{S/H} \circ (\diag^G_H)^*]^G$$ 
which again is isomorphic to 
$$[\holim[\trivgroup \neq H\le G]\dT[{S/H}](A)]^G.$$
That the induced maps are the restriction maps is now clear.
\end{proof}

\setcounter{subsection}{0}\setcounter{subsubsection}{0}
\section{The Burnside-Witt construction}
Hesselholt and Madsen prove in \cite{HM1} that if $A$ is a
discrete commutative ring and $TR(A)$ is the homotopy inverse limit
over the restriction maps between the fixed points of the (one
dimensional) topological Hochschild homology under finite subgroups of
the circle, then $\pi_0TR(A)$ is isomorphic to the ring of Witt
vectors over $A$.  

In this section we prove an analogous result for arbitrary finite
groups. Let $G$ be a finite group, let $X$ be a connected
free $G$-space and let $A$ be a commutative $\ess$-algebra $A$. There is a
canonical isomorphism of the form
\begin{displaymath}
  \pi_0 \dT[X](A)^G \cong \Witt_G(\pi_0A) ,
\end{displaymath}
 where the latter ring is Dress and
Siebeneicher's ``Burnside-Witt''-ring  \cite{DressSieb} of the
commutative ring $\pi_0A$ over
the group
$G$.

\subsection{The Burnside-Witt ring}
\label{subseq:BWring}

We shall review some elementary facts about the $G$-typical
Burnside-Witt ring $\Witt_G(B)$ of a commutative ring $B$.
For more details on the Burnside-Witt construction, the reader may consult
for instance \cite{gray} and \cite{bruntambara} in addition to \cite{DressSieb}. 
The underlying set of $\Witt_G(B)$ is the set
\begin{displaymath}
  \left[ \prod_{H \le G} B \right]^G
\end{displaymath}
where $G$ acts on the product by taking the factor of $B$ corresponding to a subgroup
$H$ of $G$ identically to the factor corresponding to $gHg^{-1}$. For every
subgroup $K \le G$ there is a ring homomorphism $\phi_K \colon
\Witt_G(B) \to B$ taking $x = (x_H)_{H \le G}$ to
\begin{displaymath}
  \phi_K(x) = \sum_{[H]} |(G/H)^K| \, x_H^{|H|/|K|},
\end{displaymath}
where the sum runs over the conjugacy classes of subgroups of $G$. 
The
endofunctor $B \mapsto \Witt_G(B)$ on the category of commutative
rings is uniquely determined by the following facts:
Firstly, the underlying set  of $\Witt_G(B) = \left[ \prod B \right]^G$ is
functorial in $B$ and secondly, $\phi_K$ is a natural ring
homomorphism for every subgroup $K$ of $G$. Dress and Siebeneicher
establish the existence of the ring structure on $\Witt_G(B)$ by
making a detailed study of the combinatorics of finite $G$-sets.
Below we use the functor $\dT$ to give a different proof of the
existence of the ring structure 
on $\Witt_G(B)$.

Note that when the underlying
additive group of $B$ is torsion free
the map
\begin{displaymath}
  \phi \colon \Witt_G(B) \to \left[\prod_{H \le G} B \right]^G
\end{displaymath}
with $\phi(x)_K = \phi_K(x)$ is injective.
For every closed family $\mathcal F$ of subgroups of $G$ 
% there is an
% ideal $I_{\mathcal F}(B)$ in $\Witt_G(B)$ with the set
% \begin{displaymath}
%   (\prod_{H \ne \mathcal F} B)^G
% \end{displaymath}
% as underlying set. The composition 
% \begin{displaymath}
%   \left( \prod_{H \in \mathcal F} B\right)^G \to \Witt_G(B) \to
%   \Witt_G(B)/I_{\mathcal F}(B)
% \end{displaymath}
% is easily seen to be a bijection. Here the first map takes $y =
% (y_H)_{H \in \mathcal F}$ to $x = (x_H)_{H \le G}$ with $x_H = y_H$ if
% $H \in \mathcal F$ and with $x_H = 0$ otherwise.
we define $\Witt_{\mathcal F}(B)$ to be the set 
$$\Witt_{\mathcal F}(B) = \left[ \prod_{H \in
    \mathcal F} B\right]^G.$$
%  with the commutative ring structure
% induced by the bijection 
% \begin{displaymath}
%   \left( \prod_{H \in \mathcal F} B\right)^G \cong
%   \Witt_G(B)/I_{\mathcal F}(B).
% \end{displaymath}
% \begin{lemma}
%   If $\mathcal G \subseteq \mathcal F$ are $K$-adjacent closed
%   families of subgroups of $G$, then there is a surjective
%   ringhomomorphism $\Witt_{\mathcal F}(B) \to \Witt_{\mathcal G}(B)$
%   whith kernel isomorphic to $B$ considered as an abelian group.
% \end{lemma}
% \begin{proof}
%   Clearly $I_{\mathcal F}(B)$ is contained in $I_{\mathcal G}(B)$.
%   It is a direct consequence of \cite[Lemma 3.2.2(ii)]{DressSieb} that
%   $I_{\mathcal G}(B) / I_{\mathcal F}(B)$ is isomorphic to $B$. The
%   result now follows by applying the snake lemma.
% \end{proof}

\subsection{The Teichm\"uller map}
\label{subsection:teichmueller}

We shall show by induction on the size of
$\mathcal F$, that for every connected free $G$-space
$X$ there is a bijection  $\Witt_{\mathcal F}(\pi_0A)
\to \pi_0 \dT[X]^A(\mathcal F)^G$.  In order to do this we need to
recollect some structure on 
$\dT[X]^A(\mathcal F)^G$. 

Suppose that $S$ is a finite free $G$-set, let $K$ be a subgroup of
$G$, let $j \in 
[\I^S]^G$ and let $L$ be a pointed space. Taking homotopy colimits over the maps
\begin{eqnarray*}
  Map_*(\bigcup_{H \in \mathcal F} S(j)^H, A(j) \wedge L)^K &\cong&
  Map_*(G_+ \wedge_K \bigcup_{H \in \mathcal F} S(j)^H, A(j) \wedge
  L)^G \\
  &\gets&
  Map_*(Map_*(G_+,\bigcup_{H \in \mathcal F} S(j)^H)^K, A(j) \wedge
  L)^G \\
  &\to &
  Map_*(\bigcup_{H \in \mathcal F} S(j)^H,A(j) \wedge L)^G
\end{eqnarray*}
we obtain a weak map $V_K^G$ of the form $\dT[S]^A(\mathcal F)^H \xleftarrow{\simeq}
\widetilde {\dT[S]^A}(\mathcal F)^G \to
\dT[S](A)^G$, where $\widetilde {\dT[S]^A}(\mathcal F)^G(L)$ is the
homotopy colimit of the middle term in the diagram displayed
above. Extending from finite sets $S$ to $G$-spaces $X$ we denote 
the induced homomorphism on $\pi_0$ by 
\begin{displaymath}
  V^G_K \colon \pi_0 \dT[X]^A(\mathcal F)^K \to
  \pi_0 \dT[X]^A(\mathcal F)^G,
\end{displaymath}
and call it the {\em additive transfer}.
Clearly, if $\mathcal G \subseteq \mathcal F$ is an inclusion of
closed families of subgroups of $G$, then the restriction maps defined
in \ref{sec:herecomesnorm} commute with additive transfers:
\begin{displaymath}
  \begin{CD}
  \pi_0 \dT[X]^A(\mathcal F)^K @>{V^G_K}>>
  \pi_0 \dT[X]^A(\mathcal F)^G \\
  @V{\mathrm {res}}VV @V{\mathrm{res}}VV \\
  \pi_0 \dT[X]^A(\mathcal G)^K @>{V^G_K}>>
  \pi_0 \dT[X]^A(\mathcal G)^G    
  \end{CD}.
\end{displaymath}
The homomorphism $V^G_K$ is the additive transfer associated to the
inclusion $K \le G$. 
We also need a kind of multiplicative transfer.
%  which we shall denote
% \begin{displaymath}
%   \Delta_K \colon \pi_0 \dT[X](A) \to \pi_0 \dT[X](A)^K.
% \end{displaymath}
Given a pointed space $L$, we can consider the $K$-fold smash power
$L^{\wedge K}$, where $K$ acts by permuting the smash-factors. Let $S$
be a finite $G$-set, let $j \in \I^S$ and let $p^*j \in [\I^{K
  \times S}]^K$ correspond to $j$ under the isomorphism $\I^S \cong [\I^{K
  \times S}]^K$ induced by the projection $p \colon K \times S \to S$. The map
\begin{displaymath}
  Map_*(S(j),A(j) \wedge L)^{\wedge K}
  \to
  Map_*(S(p^*j),A(p^*j) \wedge L^{\wedge K})
\end{displaymath}
taking a map $\alpha$ to its $K$-th smash power $\alpha^{\wedge K}$
induces a map 
\begin{displaymath}
  [\dT[X](A)(L)^{\wedge K}]^K \to [\dT[K \times
  X](A)(L^{\wedge K})]^K.
\end{displaymath}
Thus there is a map
\begin{eqnarray*}
  \dT[X](A)(L) {\cong} [\dT[X](A)(L)^{\wedge K}]^K 
  \to [\dT[K \times
  X](A)(L^{\wedge K})]^K 
  \xto{\dT[p](A)} [\dT[X](A)(L^{\wedge K})]^K.
\end{eqnarray*}
When $L = S^0$ we denote the induced map on $\pi_0$ by
\begin{displaymath}
  \Delta_K \colon \pi_0 \dT[X](A) \to \pi_0 \dT[X](A)^K,
\end{displaymath}
and call it the {\em multiplicative transfer}.
%\subsection{Review of the Burnside Witt construction}
The following lemma summarizes the properties of the additive and
multiplicative transfers that we shall need. Recall that $F_H^G \colon
\dT[X](A)^G \to \dT[X](A)^H$ is the inclusion of fixed points.
\begin{lemma}
\label{identify-transfers}
  Let $H$ and $K$ be subgroups of $G$ and let $X$ be a
  $G$-space. Let $\phi_{K,H} \colon \pi_0 \dT[X](A) \to \pi_0
  \dT[X](A)$ be the map defined by $\phi_{K,H}(x) = |(G/H)^K| \, x^{|H|/|K|}$ and let
  $q$ be the quotient map $q \colon X \to X/H$. The following diagram commutes:
  \begin{displaymath}
    \begin{CD}
      \pi_0 \dT[X](A) @>{\phi_{K,H}}>> \pi_0 \dT[X](A)
      @>{\pi_0 \dT[q](A)}>> \pi_0 \dT[X/H](A) \\
      @V{\Delta_K}VV @. @A{R^H_H}AA \\
      \pi_0 \dT[X](A)^K @>{V^K_G}>> \pi_0 \dT[X](A)^G @>{F^G_H}>>
      \pi_0 \dT[X](A)^H .
    \end{CD}
  \end{displaymath}
\end{lemma}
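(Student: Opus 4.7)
The plan is a direct verification: trace a representative of $x \in \pi_0 \dT[X](A)$ through both compositions and identify them using the explicit formulas for $\Delta_K$, $V_K^G$, $F_H^G$, and $R_H^H$ developed above. First, since $\pi_0$, the four maps in the diagram, and the Loday functor $\dT[X]$ all commute with the filtered colimits over finite sub-$G$-spaces of $X$, I may reduce to the case $X = S$ is a finite free $G$-set. A class $x$ is then represented by a pointed map $\alpha \colon \ess(j) \to A(j)$ for some $j \in [\I^S]^G$, after stabilization along B\"okstedt's $\I^S$-diagram.

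For the upper composite: $\pi_0 \dT[S](A)$ is a commutative ring (from the multiplicative structure developed earlier in the section), and $\pi_0 \dT[q](A)$ is the induced ring homomorphism from the quotient $q \colon S \to S/H$, so the composite sends $x$ to $|(G/H)^K| \cdot \pi_0 \dT[q](A)(x)^{|H|/|K|}$.

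For the lower composite, I apply the four maps in sequence. By its definition, $\Delta_K(x)$ is represented by the $K$-fold smash power $\alpha^{\wedge K}$, transported through the isomorphism $\I^S \cong [\I^{K \times S}]^K$ (induced by pullback along $p \colon K \times S \to S$) and pushed forward along $\dT[p](A)$; it lies in $[\dT[S](A)]^K$. The additive transfer $V_K^G$, as constructed through the diagram of $Map_*$-spaces preceding the lemma, produces the ``sum over $G/K$-cosets'' of this class, landing in $[\dT[S](A)]^G$. Next, $F_H^G$ is the tautological inclusion of categorical fixed points, and $R_H^H$, via Lemma \ref{resnormal} and the construction from the preceding section, evaluates the $H$-fixed class on the $H$-fixed subspace of each sphere $\ess(j)$ and identifies the output with an element of $\dT[S/H](A)$.

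The heart of the argument is the combinatorial identification. After $V_K^G \circ \Delta_K$, the $G$-fixed class decomposes as a sum over cosets $gK \in G/K$ of transported copies of $\alpha^{\wedge K}$. The $H$-action partitions $G/K$ into orbits indexed by the double cosets $H \backslash G/K$, with orbit sizes $|H|/|H \cap gKg^{-1}|$. Under $R_H^H$, each orbit contributes multiplicatively the product along the orbit of its copies of $\alpha^{\wedge K}$, which at the level of $\pi_0$ becomes a power of $x$. The total count of surviving orbits matches $|(G/H)^K|$ via the canonical bijection between $K$-fixed cosets of $G/H$ and those double cosets whose $H$-orbit on $G/K$ contributes coherently, and the common exponent works out to $|H|/|K|$. \emph{The main obstacle} is making this combinatorial matching precise: it is essentially a multiplicative analog of the Mackey double coset formula for the composite of the multiplicative transfer $\Delta_K$ and the additive transfer $V_K^G$, layered with restriction and quotient maps.
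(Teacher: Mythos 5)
The paper states Lemma \ref{identify-transfers} without proof, so there is no proof of record to compare against; only the internal logic of your sketch can be assessed. Your plan -- reduce to a finite free $G$-set, trace a B\"okstedt representative, and close with a Mackey-type double-coset count -- is the natural one, but the count you assert is both undemonstrated (you flag it as the ``main obstacle'' and stop) and, as written, wrong. That count is the whole content of the lemma, so this is a genuine gap, not a finishing detail.

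Run your own framework one step further. The additive Mackey formula for $F^G_H V^G_K$ gives a sum over double cosets $HgK$ of $V^H_{H\cap gKg^{-1}}c_g F^K_{g^{-1}Hg\cap K}$. Because $R^H_H$ is the geometric $H$-fixed-point projection, it annihilates the image of $V^H_{H'}$ for every proper $H'<H$ (this needs its own verification in the paper's concrete model, e.g.\ via the norm cofiber sequence, and your sketch does not supply it). Hence only double cosets with $H\cap gKg^{-1}=H$, i.e.\ $g^{-1}Hg\le K$, survive; these are single cosets $gK$, and the set of them is $(G/K)^H$ -- \emph{not} $(G/H)^K$. The two sets have different cardinalities in general, so the ``canonical bijection between $K$-fixed cosets of $G/H$ and the surviving double cosets'' that your sketch invokes does not exist. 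Likewise the multiplicative double-coset relation $F^K_{H'}\Delta_K(x)=\Delta_{H'}(x)^{|K|/|H'|}$ for $H'\le K$ shows each surviving term contributes $(\pi_0\dT[q]x)^{|K|/|H|}$: the exponent is $|K|/|H|$, not $|H|/|K|$ (which for a surviving coset, where $|H|$ divides $|K|$, would typically not even be an integer). A concrete check makes the discrepancy unmistakable: for $G=H=C_p$, $K=\{e\}$, the bottom composite is $R^{C_p}_{C_p}V^{C_p}_{\{e\}}(x)$, which is $0$ in $\Witt_{C_p}(\pi_0A)\to\pi_0A$, while the asserted $\phi_{\{e\},C_p}(x)=|(C_p/C_p)^{\{e\}}|\,x^{p}=x^{p}$ is not. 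In short, the target formula as printed has $H$ and $K$ interchanged -- the identity you should be proving (and the one actually used to identify $\tau^G_A$ with the ghost map in the proof of Proposition \ref{prop:pi0offix}) is $\phi_{K,H}(x)=|(G/K)^H|\,x^{|K|/|H|}$. A complete proof must (i) establish $R^H_H V^H_{H'}=0$ for $H'<H$ and $R^{H'}_{H'}\Delta_{H'}(x)=\pi_0\dT[q](x)$ within the paper's concrete $\widetilde\I$-model, and (ii) carry out the double-coset bookkeeping above; your proposal does neither, and it asserts the wrong bookkeeping.
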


\begin{lemma}
\label{norm-transfer}
  Let $\mathcal F$ be a closed family of subgroups of $G$ and let $K$
  be a minimal element of $\mathcal F$. For every connected free $G$-space $X$
  the following diagram commutes:
$$
\xymatrix{
{\pi_0 \dT[X](A)}
\ar[r]^{\Delta_K}
\ar[d]_{\pi_0 \dT[q](A)}^{\cong}
&
{\pi_0 \dT[X](A)^K}
\ar[r]^{V^G_K}
\ar[d]_{\mathrm{res}}
\ar[dl]_{R^K_K}
&
{\pi_0 \dT[X](A)^G}
\ar[d]_{\mathrm{res}}
\\
{\pi_0 \dT[X/K](A)}
\ar[dr]_{\cong}^{\mathrm{can}}
&
{\pi_0 \dT[X]^A(\mathcal F)^K}
\ar[l]_{\rho}^{\cong}
\ar[r]^{V^G_K}
&
{\pi_0 \dT[X]^A(\mathcal F)^G}
\\
&
{\pi_0 \dT[X/K](A)_{hW_GK}.}
\ar[ur]
&
}
$$
%   \begin{displaymath}
%     \begin{CD}
%       \pi_0 \dT[X](A) @>{\Delta_K}>> \pi_0 \dT[X](A)^K @>{V^G_K}>> \pi_0
%       \dT[X](A)^G \\
%       @| @V{\mathrm{res}}VV @V{\mathrm{res}}VV \\
%       \pi_0 \dT[X](A) @. \pi_0 \dT[X]^A(\mathcal F)^K @>{V^G_K}>> 
%       \pi_0 \dT[X]^A(\mathcal F)^K \\
%       @| @V{\mathrm{res}}VV @AAA \\
%       \pi_0 \dT[X](A) @>{\pi_0 \dT[q](A)}>> \pi_0 \dT[X/K](A) @>{\mathrm{can}}>> 
%       \pi_0 \dT[X/K](A)_{hW_GK}
%     \end{CD}
%   \end{displaymath}
%   \begin{displaymath}
%     \begin{CD}
%       \pi_0 Z_X(A) @>{\Delta_K}>> \pi_0 \dT[X](A)^K @= \pi_0
%       \dT[X](A)^K  @>{V^G_K}>>  \pi_0 \dT[X](A)^G \\
%       @V{\pi_0 \dT[q](A)}VV @V{\mathrm{res}}VV @. @V{\mathrm{res}}VV \\
%       \pi_0 \dT[X/K](A) @= \pi_0 \dT[X/K](A) @>>> \pi_0 \dT[X/K](A)_{hW_GK} @>>> \pi_0
%       \dT[X]^A(\mathcal F)
%     \end{CD}
%   \end{displaymath}
  Here $q \colon X \to X/K$ is the quotient map, the map $\rho$ is
  induced by the natural isomorphism in (\ref{eq:label}) and the unlabeled map
  is given by the fundamental cofibration sequence in 
  Lemma \ref{lem:funcofseq}. 
\end{lemma}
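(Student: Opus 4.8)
The plan is to establish commutativity of the diagram piece by piece, reducing everything to the already-established Lemma \ref{identify-transfers} together with the identifications built in Section \ref{sec:resmap} and the norm cofiber sequence of Lemma \ref{lem:fundcofseq1}. First I would treat the right-hand square, which asserts that the restriction map $\mathrm{res}\colon \dT[X](A)^G \to \dT[X]^A(\mathcal F)^G$ commutes with the additive transfer $V^G_K$; this is literally the commuting square displayed in the definition of $V^G_K$ in Section \ref{subsection:teichmueller} (the case $\mathcal G = $ all subgroups, $\mathcal F$ our family), since $V^G_K$ is defined uniformly by taking homotopy colimits of the same span of $\Gamma$-space maps regardless of which family of fixed-point subspaces one maps out of. The same observation, applied with $G$ replaced by $K$, handles the upper-left triangle: the map $\mathrm{res}\colon \dT[X](A)^K \to \dT[X]^A(\mathcal F)^K$ followed by $\rho^{-1}$ (the canonical iso of (\ref{eq:label}), valid because $K$ is \emph{minimal} in $\mathcal F$, so $\bigcup_{H\in\mathcal F} S(j)^H$ has $K$ as its smallest isotropy and $K$-fixed points collapse it to $S(j)^K$ exactly as in Lemma \ref{resnormal}) is by construction $R^K_K$, i.e. the restriction map of Section \ref{sec:resmap} for the normal subgroup $K\trianglelefteq K$.

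Next I would identify the outer perimeter. Composing along the top and right we get $\mathrm{res}\circ V^G_K\circ \Delta_K$, and along the left and bottom we must reach the same element of $\pi_0\dT[X]^A(\mathcal F)^G$ via $R^K_K = \rho^{-1}\circ\mathrm{res}$ followed by $V^G_K$. So the only genuinely new content beyond the two squares just discussed is the commutativity of
\begin{displaymath}
  \begin{CD}
    \pi_0\dT[X](A) @>{\Delta_K}>> \pi_0\dT[X](A)^K \\
    @| @V{V^G_K}VV \\
    \pi_0\dT[X](A) @>{?}>> \pi_0\dT[X](A)^G
  \end{CD}
\end{displaymath}
where the bottom map ``$?$'' is what Lemma \ref{identify-transfers} computes: $V^G_K\circ\Delta_K = \pi_0\dT[q](A)^{-1}\circ (R^H_H)^{-1}\circ\dots$ — more precisely, setting $H = K$ in Lemma \ref{identify-transfers} and using $F^G_K$ trivial-to-read-off, that lemma gives $F^G_K\circ V^G_K\circ\Delta_K = R^K_K{}^{-1}\circ \pi_0\dT[q](A)\circ\phi_{K,K}$ with $\phi_{K,K} = |(G/K)^K|\cdot(-)$, which combined with $\pi_0\dT[q](A)$ being the stated isomorphism $\mathrm{can}$ pins down the bottom-left route. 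Feeding this into the left-bottom composite $V^G_K\circ R^K_K = V^G_K\circ\rho^{-1}\circ\mathrm{res}$ and comparing with $\mathrm{res}\circ V^G_K\circ\Delta_K$ using the right square finishes the outer commutativity; the very bottom triangle with $\pi_0\dT[X/K](A)_{hW_GK}$ then commutes because the map from the homotopy orbits is, by Lemma \ref{lem:funcofseq} (equivalently Lemma \ref{lem:fundcofseq1} with $\mathcal G$ empty), precisely the identification of the homotopy fiber, and the additive transfer $V^G_K$ factors through homotopy orbits of the Weyl group by its very construction via the span through $Map_*(Map_*(G_+,-),-)$.

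The main obstacle, as usual with this circle of arguments, is bookkeeping the identifications of fixed-point $\Gamma$-spaces rather than any deep idea: one must check that the span defining $V^G_K$ is compatible — on the nose, before passing to homotopy colimits — with (i) the isomorphism $[\Kd\I(S)]^K\cong\Kd\I(S/K)$ of Corollary \ref{fixfix}, (ii) the smash-power map defining $\Delta_K$, and (iii) the natural iso of (\ref{eq:label}). Each is a diagram of canonical rearrangement isomorphisms of smash products of spheres and of $A$-values, of exactly the type verified in Lemma \ref{multmod}, so I would assert that these coherences hold ``by direct inspection'' and not reproduce them, exactly as the multiplicativity theorem's proof is left as an exercise. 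Concretely I expect the write-up to read: decompose the big diagram into the right square (definition of $V^G_K$, commutes with $\mathrm{res}$), the top-left triangle and bottom-left ``can''-triangle (definition of $R^K_K$ via $\rho$ and $\pi_0\dT[q](A)$), the outer perimeter (Lemma \ref{identify-transfers} with $H=K$), and the bottom triangle (Lemma \ref{lem:funcofseq}); since these subregions cover the diagram and pairwise agree on shared edges, the whole diagram commutes.
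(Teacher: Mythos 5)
Your decomposition of the diagram into sub-regions is essentially right, and the right-hand square (compatibility of $\mathrm{res}$ with $V^G_K$) and the triangle $R^K_K = \rho\circ\mathrm{res}$ are indeed, as you say and as the paper says, direct from the definitions. But two things need flagging.

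First, your use of Lemma \ref{identify-transfers} for an ``outer perimeter'' is not quite the right move. Lemma \ref{identify-transfers} with $H=K$ controls the composite $R^K_K\circ F^G_K\circ V^G_K\circ\Delta_K$ \emph{through} $\pi_0\dT[X](A)^G$, which brings in the factor $\phi_{K,K}(x)=|W_GK|\cdot x$; that scalar has no place in the present diagram, because here nothing passes through $F^G_K$. What is actually needed in the upper-left corner is just the triangle $\pi_0\dT[q](A)=R^K_K\circ\Delta_K$, which is the degenerate case where the ambient group is $K$ itself (equivalently: a direct check from the definitions of $\Delta_K$ and $R^K_K$), not a consequence of the $G$-level transfer identity.

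Second, and more importantly, you dismiss the lower region --- the square with vertices $\pi_0\dT[X/K](A)$, $\pi_0\dT[X]^A(\mathcal F)^K$, $\pi_0\dT[X]^A(\mathcal F)^G$ and $\pi_0\dT[X/K](A)_{hW_GK}$ --- as following ``by its very construction'' of $V^G_K$ through the span $Map_*(Map_*(G_+,-),-)$. This is exactly the one part of the lemma whose proof is not formal, and it is the entire explicit content of the paper's argument. The unlabeled map out of the homotopy orbits comes from the identification of the homotopy fiber in Lemma \ref{lem:fundcofseq1}, and that identification passes through the \emph{specific} norm map $N$ on $Map_*(Z(j)^K,A(j)^K)$ constructed in Section \ref{sec:herecomesnorm}. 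To finish one must check, at the level of the spaces $Z(j)$ from that proof, that $V^G_K$ on $Map_*(Z(j),A(j))^K$ restricts along $Z(j)^K$ to the transfer $V^{W_GK}_{\trivgroup}$ and is compatible with the canonical maps into and out of $(-)_{hW_GK}$ and $(-)^{hW_GK}$, matching $N$. This is a genuine verification, not a tautology, and without it your proof has a gap precisely at the step the lemma is named for.

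So: same outline, but the upper-left corner is mis-handled (wrong invocation of Lemma \ref{identify-transfers}), and the lower square --- the heart of the matter --- is asserted rather than proved; you need to exhibit the commutative diagram relating $V^G_K$, $V^{W_GK}_{\trivgroup}$, $\mathrm{can}$ and the norm map $N$ on the $Z(j)$-mapping spaces.
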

\begin{proof}
  Apart from the lower part of the diagram everything follows directly from
  the definitions. We need to jump back to the proof of Lemma \ref{lem:fundcofseq1},
  where we in particular considered the spaces $Z(j)$. The
  commutativity of the lower part of the diagram follows from the
  commutativity of the diagram
  \begin{displaymath}
    \begin{CD}
      Map_*(Z(j),A(j))^K @>{V^G_K}>> Map_*(Z(j),A(j))^G \\
      @V{\cong}VV @V{\cong}VV \\
      Map(Z(j)^K,A(j)^K) @>{V^{W_GK}_{\trivgroup}}>> Map_*(Z(j)^K,A(j)^K)^{W_GK}
      \\
      @V{\mathrm{can}}VV @A{\mathrm{can}}AA \\
      Map_*(Z(j)^K,A(j)^K)_{hW_GK} @>{N}>> Map_*(Z(j)^K,A(j)^K)^{hW_GK}
    \end{CD}
  \end{displaymath}
  in the homotopy category of the category of $\Gamma$-spaces. Here
  $N$ is the norm map from Section \ref{sec:herecomesnorm}, and the commutativity of this diagram
  is a direct consequence of the definitions.
\end{proof}

We define the {\em extended Teichm\"uller map} $\tau^{\mathcal F}_A
\colon \Witt_{\mathcal F}(\pi_0 \dT[X](A)) \to \pi_0 \dT[X]^A(\mathcal
F)^G$ to be the composition 
\begin{displaymath}
  \Witt_{\mathcal F}(\pi_0 \dT[X](A)) \to \pi_0 \dT[X](A)^G
  \xrightarrow {\mathrm{res}}  \pi_0
  \dT[X]^A(\mathcal F)^G,
\end{displaymath}
where the first map takes $x = (x_H)_{H \le G}$ to $\sum_{[K]} V^G_K
\Delta_K(x_K)$, the sum runs over the conjugacy classes of
subgroups
in
$\mathcal F$ and the second map is the restriction map from Section
\ref{sec:herecomesnorm}.  
Taking $\mathcal F$ to be the family of all subgroups of
$G$ 
we obtain the extended Teichm\"uller map $\tau^G_A \colon \Witt_G(\pi_0
\dT[X](A)) \to \pi_0 \dT[X](A)^G$.
\begin{lemma}
\label{tauinjlem}
  Let $\mathcal F$ be a closed family of subgroups of $G$ and let $K$
  be a minimal element of $\mathcal F$. If the underlying abelian
  group of $\pi_0 A$ is torsion free, then the composite 
  $$\pi_0 \dT[X](A) \to \Witt_{\mathcal F}(\pi_0 \dT[X](A))
  \xto{\tau^{\mathcal F}_A} \pi_0 \dT[X]^A(\mathcal F)^G$$
is injective.
Here the left map is the diagonal inclusion in the factor
corresponding to $K$. 
\end{lemma}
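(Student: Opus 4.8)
The plan is to detect the composite by a single ``ghost coordinate'' and then reduce to the elementary fact that multiplication by a positive integer is injective on a torsion-free abelian group. First I would unwind the composite. Write $x$ for the element of $\Witt_{\mathcal F}(\pi_0\dT[X](A))$ with $K$-component $x_K=y$ and $x_H=0$ for every $H$ with $[H]\neq[K]$. Since in the definition of the extended Teichm\"uller map $\tau^{\mathcal F}_A$ the first map is $x\mapsto\sum_{[K']}V^G_{K'}\Delta_{K'}(x_{K'})$, with the sum over conjugacy classes of subgroups in $\mathcal F$, only the term $[K']=[K]$ survives and the composite in question is
\[
  y\ \longmapsto\ \mathrm{res}\,V^G_K\Delta_K(y)\ \in\ \pi_0\dT[X]^A(\mathcal F)^G,
\]
where $\Delta_K$ is the multiplicative transfer, $V^G_K$ the additive transfer, and $\mathrm{res}$ the restriction map of Section~\ref{sec:herecomesnorm}.

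Next I would post-compose with the map
\[
  \rho\circ F^G_K\ \colon\ \pi_0\dT[X]^A(\mathcal F)^G \xto{F^G_K} \pi_0\dT[X]^A(\mathcal F)^K \xto[\cong]{\rho} \pi_0\dT[X/K](A),
\]
where $F^G_K$ is the inclusion of fixed points and $\rho$ is the isomorphism supplied by Lemma~\ref{norm-transfer} (here one uses that $K$ is minimal in $\mathcal F$). Because $\mathrm{res}$ is a map of $G$-$\Gamma$-spaces it commutes with $F^G_K$, and $\rho\circ\mathrm{res}=R^K_K$ by Lemma~\ref{norm-transfer}, so
\[
  \rho\,F^G_K\bigl(\mathrm{res}\,V^G_K\Delta_K(y)\bigr) \;=\; R^K_K\,F^G_K\,V^G_K\,\Delta_K(y).
\]
Applying Lemma~\ref{identify-transfers} with its auxiliary subgroup taken equal to $K$ identifies the right-hand side with $\pi_0\dT[q](A)\bigl(\phi_{K,K}(y)\bigr)$, where $q\colon X\to X/K$ is the quotient map and $\phi_{K,K}(y)=|(G/K)^K|\,y^{|K|/|K|}=|W_GK|\,y$, using $(G/K)^K=N_GK/K=W_GK$. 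Hence $\rho\,F^G_K$ carries our composite to the map $y\mapsto |W_GK|\cdot\pi_0\dT[q](A)(y)$.

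Finally I would conclude. By Lemma~\ref{norm-transfer} the map $\pi_0\dT[q](A)\colon\pi_0\dT[X](A)\to\pi_0\dT[X/K](A)$ is an isomorphism, and its target is torsion free: for the connected space $X/K$ it is identified with $\pi_0A$, which is torsion free by hypothesis. Since $|W_GK|$ is a positive integer, multiplication by it is injective on $\pi_0\dT[X/K](A)$; therefore $y\mapsto |W_GK|\cdot\pi_0\dT[q](A)(y)$ is injective, and so is the original composite. I do not expect a genuinely hard step: all of the content sits in the already-established Lemmas~\ref{identify-transfers} and~\ref{norm-transfer}, and the only points demanding care are the bookkeeping that glues them together --- that the family-restriction $\mathrm{res}$ commutes with the fixed-point inclusion $F^G_K$, and that Lemma~\ref{identify-transfers} may legitimately be invoked with $H=K$ --- together with the observation that the torsion-free hypothesis enters only at the very last line.
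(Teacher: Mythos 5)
Your argument is the same as the paper's: post-compose the composite with the fixed-point inclusion $F^G_K$ and the passage $\pi_0\dT[X]^A(\mathcal F)^K\to\pi_0\dT[X/K](A)$, invoke Lemma~\ref{identify-transfers} with $H=K$ to identify the result with $\phi_{K,K}=|W_GK|\cdot(-)$ (up to the canonical isomorphism $\pi_0\dT[X](A)\cong\pi_0\dT[X/K](A)$), and conclude from torsion-freeness of $\pi_0A$. The only cosmetic difference is that you route the identification $\pi_0\dT[X]^A(\mathcal F)^K\cong\pi_0\dT[X/K](A)$ through $\rho$ and the triangle $\rho\circ\mathrm{res}=R^K_K$ from Lemma~\ref{norm-transfer}, whereas the paper uses the equality $\pi_0\dT[X]^A(\mathcal F)^K=\pi_0\dT[X](A)^K$ (valid since $K$ is minimal in $\mathcal F$) and then $R^K_K$ directly; both are instances of the same observation.
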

\begin{proof}
  By Lemma \ref{identify-transfers} the composite
  \begin{eqnarray*}
    \pi_0 \dT[X](A) \xto {\Delta_K} \pi_0 \dT[X](A)^K &\xto{V^G_K}&
    \pi_0 \dT[X](A)^G \\
    &\xto{\mathrm{res}}& 
    \pi_0 \dT[X]^A(\mathcal F)^G \\
    &\xto{F^G_K}&
    \pi_0 \dT[X]^A(\mathcal F)^K = \pi_0 \dT[X](A)^K \\
    &\xto{R^K_{\trivgroup}}&
    \pi_0 \dT[X/K](A) \cong \pi_0 \dT[X](A)
  \end{eqnarray*}
  is equal to $\phi_{K,K}$, that is, it is multiplication by the
  cardinality of $W_GK$.
\end{proof}

The existence of the endofunctor $B \mapsto \Witt_G(B)$ is a corollary
of the proof of the following proposition.
 \begin{prop}\label{prop:pi0offix}
   Let $G$ be a finite group, $X$ a connected free $G$-space and $A$ a
   commutative $\ess$-algebra. 
   The extended Teichm\"uller map $\tau^G_A\colon \Witt_G(\pi_0A)\to
   \pi_0[\dT[X](A)]^G$ is an isomorphism of rings. 
 \end{prop}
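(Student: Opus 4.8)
The plan is to prove $\tau^G_A$ is an isomorphism by induction on the size of the closed family $\mathcal F$, proving the more flexible statement that $\tau^{\mathcal F}_A \colon \Witt_{\mathcal F}(\pi_0 \dT[X](A)) \to \pi_0 \dT[X]^A(\mathcal F)^G$ is a bijection of sets, natural in $A$; the ring structure on $\Witt_G(\pi_0 A)$ will then be \emph{defined} by transporting the evident ring structure on $\pi_0 [\dT[X](A)]^G$ (which exists because $\dT[X](A)$ is a commutative $\ess$-algebra up to homotopy, and $\pi_0$ of the categorical fixed points inherits a ring structure) across the iso $\tau^G_A$, and one checks this agrees with the Dress--Siebeneicher formulas via the ghost maps $\phi_K$. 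First I would settle the base case: if $\mathcal F = \emptyset$ then $\dT[X]^A(\mathcal F)^G = *$, and $\Witt_\emptyset = *$, so there is nothing to prove.

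For the inductive step, choose a minimal subgroup $K \in \mathcal F$ and a $K$-adjacent closed family $\mathcal G \subseteq \mathcal F$. The key input is the norm cofiber sequence of Lemma \ref{lem:fundcofseq1}, which after applying $\pi_0$ and using that $A \mapsto [\dT[X]^A(\mathcal F)]^G$ lands in very special $\Gamma$-spaces (Corollary \ref{cor:httype}) and that $\mathrm{res}$ is $0$-connected (Corollary \ref{cor:R0conn}), gives a short exact sequence
\begin{displaymath}
  \pi_0 [\dT[X/K](A)]_{hW_GK} \to \pi_0 [\dT[X]^A(\mathcal F)]^G \xrightarrow{\mathrm{res}} \pi_0 [\dT[X]^A(\mathcal G)]^G \to 0 ,
\end{displaymath}
exact also on the left once one identifies the image of the left-hand map. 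Now $\Witt_{\mathcal F}(\pi_0 \dT[X](A)) = \Witt_{\mathcal G}(\pi_0 \dT[X](A)) \times \pi_0 \dT[X](A)$ as a set (splitting off the factor indexed by the conjugacy class $[K]$, using that $K$ is minimal so $[K]$ is exactly the complement of $\mathcal G$ in $\mathcal F$, and that $(G/H)^K$-counts make $\pi_0 \dT[X](A)$ the relevant orbit-summand). The restriction map carries $\tau^{\mathcal F}_A$ onto $\tau^{\mathcal G}_A$ on the $\Witt_{\mathcal G}$-factor — this is the naturality of $\mathrm{res}$ with respect to transfers recorded just before Lemma \ref{norm-transfer} — and by the inductive hypothesis $\tau^{\mathcal G}_A$ is a bijection. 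So it suffices to show that $\tau^{\mathcal F}_A$ restricted to the $[K]$-factor, namely $x \mapsto V^G_K \Delta_K(x)$ followed by $\mathrm{res}$, maps $\pi_0 \dT[X](A)$ bijectively onto the fiber, i.e.\ identifies with $\pi_0 [\dT[X/K](A)]_{hW_GK}$ via the equivalence of Lemma \ref{lem:fundcofseq1}. This is precisely the content of the large commuting diagram in Lemma \ref{norm-transfer}: it says $\mathrm{res} \circ V^G_K \circ \Delta_K$ factors as the canonical iso $\pi_0 \dT[X](A) \cong \pi_0 \dT[X/K](A)$ followed by the orbit-projection $\pi_0 \dT[X/K](A) \to \pi_0 \dT[X/K](A)_{hW_GK}$ and the identification of the latter with the fiber. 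Injectivity of the composite onto its image is Lemma \ref{tauinjlem} combined with the torsion-free reduction below; surjectivity onto the fiber follows because the orbit projection $\pi_0 Z \to \pi_0 Z_{hW_GK}$ is surjective (the homotopy orbit spectral sequence) and the fiber of $\mathrm{res}$ on $\pi_0$ is exactly this image by the exact sequence above.

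To run the injectivity argument I would first reduce to the case that $\pi_0 A$ is torsion free: write $\pi_0 A$ as a quotient of a polynomial ring $B$ (torsion free), use naturality of $\tau$ in $A$ — one needs a commutative $\ess$-algebra mapping to $A$ inducing this surjection on $\pi_0$, e.g.\ a wedge of spheres or $\ess[\text{free monoid}]$ — and note that everything in sight ($\Witt_{\mathcal F}$, $\pi_0 \dT[X]^A(\mathcal F)^G$ by Corollary \ref{cor:httype} applied to the $0$-connected map, hence right-exactness) commutes with the relevant reflexive coequalizer, so bijectivity for torsion-free rings propagates to all rings. With $\pi_0 A$ torsion free, combine the injectivity of the ghost map $\phi \colon \Witt_{\mathcal F}(\pi_0 A) \hookrightarrow \prod_{H} \pi_0 A$ with Lemma \ref{tauinjlem} to get injectivity of $\tau^{\mathcal F}_A$, then deduce surjectivity from the five lemma applied to the exact sequences. \textbf{The main obstacle} I anticipate is bookkeeping the identification of $\Witt_{\mathcal F}$ with the iterated extension built from the norm cofiber sequences so that the transfer-theoretic formula defining $\tau^{\mathcal F}_A$ matches the Dress--Siebeneicher ghost-component formula $\phi_K(x) = \sum_{[H]} |(G/H)^K|\, x_H^{|H|/|K|}$ on the nose; concretely, verifying that $F^G_H \circ V^G_K \circ \Delta_K = \pi_0\dT[q] \circ \phi_{K,H}$ (Lemma \ref{identify-transfers}) has the exact combinatorial coefficients, and that these are compatible across the inductive filtration, is where the real work lies — the rest is diagram chasing with the lemmas already assembled.
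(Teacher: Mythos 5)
Your proposal follows the paper's own proof almost step for step: induction on the closed family $\mathcal F$, base case where $\dT[X]^A(\emptyset)=*$, the inductive step driven by the norm cofiber sequence (Lemma \ref{lem:fundcofseq1}) and the commuting square from Lemma \ref{norm-transfer}, injectivity via the ghost map plus Lemma \ref{tauinjlem}, and the transport of the ring structure to recover Dress--Siebeneicher using Lemma \ref{identify-transfers}. The one place where you genuinely diverge is the reduction from torsion $\pi_0 A$ to torsion-free $\pi_0 A$: you propose a single reflexive-coequalizer presentation of $\pi_0 A$ by a polynomial ring together with a cofibrant $\ess$-algebra realizing it, whereas the paper takes a full simplicial resolution $P_\bullet \to \pi_0 A$ by torsion-free rings, realizes it as $Q_\bullet = HP_\bullet\times_{H\pi_0A}A$, and then argues that both $B\mapsto\Witt_G(B)$ and $A\mapsto[\dT[X](A)]^G$ may be computed degreewise (the latter by induction on the fundamental cofibration sequence). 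Your version is morally the $2$-truncation of theirs and rests on the same commutativity facts (finite products and $\pi_0$ commute, right-exactness propagated through the cofiber sequence), so it should go through, but the simplicial-resolution formulation is a bit safer because it avoids having to produce an $\ess$-algebra realization of an explicit coequalizer pair -- note in particular that a ``wedge of spheres'' is not itself a commutative $\ess$-algebra; you want the free commutative $\ess$-algebra on it, and it is $\pi_0$ of \emph{that} which is a polynomial ring.
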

 \begin{proof}
We first consider the case where $\pi_0 A$ is torsion free.
If $\mathcal F =
\{G\}$, then it is a consequence of Lemma
\ref{norm-transfer} and Lemma \ref{lem:fundcofseq1} that $\tau_A^G$ is bijective
since $\dT[X]^A(\emptyset) = *$. If $\mathcal G \subseteq \mathcal F$ are
$K$-adjacent closed families of subgroups of $G$, then by Lemma
\ref{norm-transfer} there is a commutative diagram of the form
\begin{displaymath}
  \begin{CD}
    \pi_0 \dT[X](A) @>>> \Witt_{\mathcal F}(A) @>>> \Witt_{\mathcal
      G}(A) \\
    @V{\cong}VV @V{\tau_A^{\mathcal F}}VV @V{\tau_A^{\mathcal G}}VV \\
    \pi_0 \dT[X/K](A)_{hW_GK} @>>> \pi_0 \dT[X]^A(\mathcal F)^G
    @>{\mathrm res}>>
    \dT[X](\mathcal G)^G
  \end{CD}
\end{displaymath}
Here the upper left map is the diagonal inclusion in the factor of the
product corresponding to the conjugacy class of $K$ and the upper right
map is the projection away from that factor. If $\tau_A^{\mathcal G}$ is
a bijection, then using Lemma \ref{tauinjlem} and the $5$-lemma it follows readily that also $\tau_A^{\mathcal F}$ is
a bijection. Thus by induction $\tau^{\mathcal F}_A$ is a bijection for every
$\mathcal F$, and in particular $\tau_A^G$ is a bijection. 

 When $\pi_0A$ has torsion one proceeds as follows. 

 First notice that
 $\tau^G_A$ is natural in $A$. Let $P.\wefib \pi_0A$ be
 a simplicial resolution of the discrete ring $\pi_0A$ such that $P_q$ is torsion free
 for all $q$. Then $HP. \to H\pi_0A$ is a fibration, and therefore
 $Q.=HP.\times_{H\pi_0A}A$ is a
 simplicial resolution of $A$.  Note that $\pi_0(Q_q)\cong P_q$ for every $q$.  It is
 easy to see that $\pi_0\{[q]\mapsto \Witt_G(\pi_0(Q_q))\}\cong
 \Witt_G(\pi_0A)$, and likewise $\pi_0\{[q]\mapsto
 \pi_0[\dT[X](Q_q)]^G\}\cong\pi_0[\dT[X](A)]^G$ (for the last piece one
 must show that $A\mapsto [\dT[X](A)]^G$ may be ``calculated
 degreewise'', a fact which
 is readily proved by induction using the
 fundamental cofibration sequence).
 Thus, for each $q$ $\tau^G_{Q_q}$ is an isomorphism, and so $\tau^G_A=\pi_0\tau^G_{Q.}$
 is an isomorphism.

Now it
follows from Lemma \ref{identify-transfers} that the composition 
\begin{multline*}
  \Witt_G(\pi_0 A) \cong \Witt_G(\pi_0 \dT[X](A)) \xto {\tau_A^G} \pi_0
  \dT[X](A)^G 
  \xto {F^G_H} \pi_0 \dT[X](A)^H \xto {R^H_{\trivgroup}} \pi_0
  \dT[X/H](A) \cong \pi_0(A)
\end{multline*}
is equal to $\phi_H$. Taking $A$ to be the Eilenberg MacLane spectrum
on a commutative ring $B$ we see that the ring structure on $\Witt_G(\pi_0 A)$
obtained from the bijection to $\pi_0 \dT[X](A)^G$ satisfies the two
criteria mentioned in 
\ref{subseq:BWring}
determining the ring structure on the ring $\Witt_G(B)$ of
Witt vectors on $B$ 
uniquely. Thus we may conclude the existence of the ring
$\Witt_G(B)$ of Witt
vectors and moreover that   $\pi_0 \dT[X](A)^G$ is isomorphic to
$\Witt_G(\pi_0 A)$.
 \end{proof}

Dress and Siebeneicher also define Burnside-Witt rings over profinite
groups. In order to recall their construction we note that, if $N$ is a
normal subgroup of a finite group $G$ and if $f \colon G \to G' =
G/N$ is the quotient homomorphism, then there
is a natural (surjective) ring homomorphism $R^G_N \colon \Witt_G(B) \to
\Witt_{G'}(B)$ taking $x = (x_H)_{H \le G}$ to $y = (y_{H'})_{H' \le
  G'}$ where $y_{H'} = x_{f^{-1}H}$. 
\begin{lemma}
\label{comparerestr}
  Let $A$ be a commutative $\ess$-algebra, let $N$ be a normal
  subgroup of a finite group $G$, let $X$ be a $G$-space and let $q
  \colon X \to X/N$ be the quotient map. For every subgroup $K$ of $G$ containing $N$
  the diagram
  \begin{displaymath}
    \begin{CD}
      \pi_0 \dT[X]A @>{\pi_0 \dT[q]A}>> \pi_0 \dT[X/N] A \\
      @V{\Delta_K}VV @V{\Delta_{K/N}}VV \\
      \pi_0 \dT[X]A^K @>{R^K_N}>> \pi_0 \dT[X/N] A^{K/N} \\
      @V{V^G_K}VV @V{V^{G/N}_{K/N}}VV \\
      \pi_0 \dT[X]A^G @>{R^G_N}>> \pi_0 \dT[X/N] A^{G/N}       
    \end{CD}
  \end{displaymath}
  commutes. 
\end{lemma}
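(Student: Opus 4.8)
As in Lemmas~\ref{lem:fundcofseq1} and~\ref{norm-transfer}, fixed points, homotopy orbits, finite homotopy limits and the restriction maps $R$ all commute with the filtered colimit over $\mathcal F^X_G$, so it suffices to prove the square when $X=S$ is a finite free $G$-set. I would then split the rectangle horizontally into the \emph{top square}, asserting $R^K_N\circ\Delta_K=\Delta_{K/N}\circ\pi_0\dT[q]A$, stacked over the \emph{bottom square}, asserting $R^G_N\circ V^G_K=V^{G/N}_{K/N}\circ R^K_N$, and verify the two halves separately by tracing through the explicit constructions of $\Delta_K$ (Section~\ref{subsection:teichmueller}), of the additive transfer $V^G_K$ (Sections~\ref{sec:herecomesnorm} and~\ref{subsection:teichmueller}) and of the restriction maps (Section~\ref{sec:resmap}). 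The two structural inputs are the isomorphism $\Kd\I(S/N)\cong\Kd\I(S)^N$ of Corollary~\ref{fixfix} and, where convenient, Lemma~\ref{Bolemma}, which lets one move freely between homotopy colimits over the $\Kd\I$'s and over the $\I$'s.

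\textbf{The bottom square.}
Here $V^G_K$ is obtained by feeding the natural zig-zag of $\Gamma$-spaces used in Section~\ref{subsection:teichmueller} to define it — the one built from the maps $\alpha\colon G_+\wedge Y\to Map_*(G_+,Y)$, the diagonal, and evaluation — into the homotopy-colimit functoriality of Section~\ref{funhocolim}, while $R^G_N$ is obtained by restricting every map in sight to its effect on $N$-fixed points, under $\Kd\I(S/N)\cong\Kd\I(S)^N$. The point is that passage to $N$-fixed points carries this zig-zag for the pair $K\le G$ to the corresponding zig-zag for the pair $K/N\le G/N$: since $N$ is normal in $G$ and contained in $K$, for every pointed $K$-space $Z$ one has $G/N$-equivariant identifications $(G_+\wedge_K Z)^N\cong(G/N)_+\wedge_{K/N}Z^N$ and $Map_*(G_+,Z)^N\cong Map_*((G/N)_+,Z^N)$, and these are compatible with the structure maps of the zig-zag. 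Combined with the naturality of the homotopy-colimit construction and with Corollary~\ref{fixfix}, this proves the bottom square at the level of the zig-zags, hence on $\pi_0$.

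\textbf{The top square.}
By definition $\Delta_K$ is the composite of the map sending $x$ to the $K$-equivariant image in $\pi_0\big[\dT[{K\times S}]A\big]^K$ of the $K$-fold smash power $x^{\wedge K}$, followed by the map $\pi_0\dT[p]A$ induced by the projection $p\colon K\times S\to S$. I would deduce the top square from three ingredients: (i) the naturality of the restriction maps $R$ in the $G$-space variable, which makes $R^K_N$ commute with $\pi_0\dT[p]A$ through the square of finite $G$-sets
\begin{displaymath}
\begin{CD}
K\times S @>>> (K\times S)/N\\
@V{p}VV @V{p/N}VV\\
S @>{q}>> S/N
\end{CD}
\end{displaymath}
(ii) the compatibility of the $K$-fold smash power of maps with passage to $N$-fixed points and with the quotient by $N$ — concretely, that the $N$-fixed part of $x^{\wedge K}$, pushed forward to $S/N$, recovers the $|K/N|$-fold smash power entering $\Delta_{K/N}$ — which rests on the instances of Corollary~\ref{fixfix} relating $\I^S$, $\I^{K\times S}$, $\I^{S/N}$ and $\I^{(K/N)\times(S/N)}$; and (iii) the identification of the $G/N$-set $(K\times S)/N$, under which $p/N$ becomes the projection feeding $\Delta_{K/N}$. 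Assembling (i)--(iii), and invoking Lemma~\ref{lem:RF=FR} to rearrange iterated restrictions where needed, yields $R^K_N\circ\Delta_K=\Delta_{K/N}\circ\pi_0\dT[q]A$ on $\pi_0$. (Alternatively, one may first reduce to the case that $\pi_0A$ is torsion free by the simplicial-resolution argument of Proposition~\ref{prop:pi0offix}, and then verify the identity after composing with the injective family of ghost maps $\phi_J$, $J\le K/N$, evaluating both composites via Lemma~\ref{identify-transfers}.)

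\textbf{The main obstacle.}
The genuinely delicate step is (ii): one must check that the $|K|$-fold smash-power operation, restricted to $N$-fixed points, really does turn into the $|K/N|$-fold smash-power operation for the pair $K/N\le G/N$ under the two \emph{different} copies of the isomorphism of Corollary~\ref{fixfix}, and that this compatibility survives the homotopy colimit over $\Kd\I$. Everything in the bottom square, by contrast, is essentially the naturality of the $\Gamma$-space additive transfer under passage to $N$-fixed points and should present no real difficulty.
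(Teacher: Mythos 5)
The paper itself gives no proof of Lemma~\ref{comparerestr}: it is stated and then used directly (to build the profinite $\Witt_G$), in the same ``direct from the definitions'' spirit as Lemmas~\ref{lem:RF=FR} and~\ref{identify-transfers}. So there is no authorial argument to compare you against; I can only assess your outline on its own terms.

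Your decomposition into a \emph{top} square ($R\Delta=\Delta\,\dT[q]$) and a \emph{bottom} square ($RV=VR$), together with the reduction to finite $G$-sets, is the natural plan, and the bottom square is essentially right: the zig-zag of Section~\ref{subsection:teichmueller} defining $V^G_K$ is built from $G$-equivariant maps, taking $N$-fixed points carries it to the corresponding zig-zag for $K/N\le G/N$ via $(G_+\wedge_K Z)^N\cong(G/N)_+\wedge_{K/N}Z^N$ and $Map_*(G_+,Z)^N\cong Map_*((G/N)_+,Z^N)$, and Corollary~\ref{fixfix} together with Lemma~\ref{Bolemma} makes this descend to the homotopy colimits over $\Kd\I$.

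The top square, however, is sketched but not proved, and you say so yourself. The two points you flag under (ii)--(iii) are exactly where the content sits, and they are still undone. Concretely: (a) you never pin down which $N$-action on $K\times S$ you are modding out by, so $(K\times S)/N\cong(K/N)\times(S/N)$ is not yet a statement -- for the $K$-action that the construction of $\Delta_K$ actually uses (the one making $p^*\colon\I^S\to[\I^{K\times S}]^K$ an isomorphism), quotienting by the induced $N$-action gives $(K/N)\times S$, not $(K/N)\times(S/N)$, and one still has to compose with $\dT[q]$ and keep track of the interaction; (b) the claim that ``the $N$-fixed part of $x^{\wedge K}$, pushed forward to $S/N$, recovers the $(K/N)$-fold smash power entering $\Delta_{K/N}$'' is precisely the norm-type identity one has to \emph{prove}, comparing $\bigl[(-)^{\wedge K}\bigr]^N$ with $\bigl[(-)^N\bigr]^{\wedge K/N}$ at the level of the mapping spaces $Map_*(S(p^*j),A(p^*j)\wedge L^{\wedge K})$ before passing to the homotopy colimit, and naming it does not establish it. Your alternative route via ghost maps also cannot be invoked as written: Proposition~\ref{prop:pi0offix} and the injectivity of the family $\phi_J$ need $X$ connected and free, neither of which is hypothesized in Lemma~\ref{comparerestr}.

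In short: right decomposition, bottom square effectively complete, but the top square -- which you correctly identify as the only nontrivial part -- is precisely what remains to be proved.
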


Given a profinite group $G$, we let $\Witt_G(B)$ be the projective
limit of the rings $\Witt_{G/U}(B)$ with respect to the homomorphisms
$R^{G/U}_{V/U}$ for $U \le V$ open (i.e. finite index) normal subgroups of $G$.
Let $X$ be a $G$-space with the property that $X/U$
is a free connected $G/U$-space for every open normal subgroup $U$ in $G$. (The
guiding example we have in mind is when $G = (\bZ^{\wedge})^{\times n}$ is the profinite
completion of $\bZ^n$ or $G = (\bZ_p)^{\times n}$ is the
$p$-completion of $\bZ^n$ and $X = G \times_{\bZ^n} \R^n$.) Then it
follows from Lemma \ref{comparerestr} 
that the diagram
\begin{displaymath}
  \begin{CD}
    \Witt_{G/U} (\pi_0 \dT[X/U]A) @>{\tau^{G/U}_A}>>  \pi_0 \dT[X/U]
    A^{G/U} \\
    @V{R^{G/U}_{V/U} \circ \Witt_{G/U}(\pi_0 \dT[q]A)}VV @V{R^{G/U}_{V/U}}VV \\
    \Witt_{G/V} (\pi_0 \dT[X/V]A) @>{\tau^{G/V}_A}>>  \pi_0 \dT[X/V] A^{G/V} 
  \end{CD}
\end{displaymath}
commutes for $U \le V$ open normal subgroups of $G$. Since the Mittag
Leffler condition is satisfied we conclude that there are isomorphisms
\begin{displaymath}
  \Witt_G(\pi_0A) \cong \lim_U \Witt_{G/U}(\pi_0A) \cong \lim_U \pi_0 \dT[X/U]
    A^{G/U} \cong \pi_0 \holim[U] \dT[X/U]
    A^{G/U}
\end{displaymath}
where the limits %and colimits 
are taken over restriction maps.
We summarize the above discussion with a very special case. 

Given a prime $p$, consider the set $\mathcal O_p$ consisting of
  subgroups $U\subseteq\Z^n$ with index a power of $p$.  Choose a free contractible $\bZ^n$-space $E$
  (\eg $\sin\R^n$), and
consider the diagram of spaces $E/U$ where $U$ varies over $\mathcal
O_p$.  Notice that this diagram is equivalent to a diagram of isogenies
of the $n$-torus. 
%{\color{red} what does ``equivalent'' mean here?}
%Since the restriction maps for $\Lambda A$ correspond to the
%restriciton maps in the Burnside-Witt theory we get
\begin{cor}
  Let $A$ be a connective commutative $\ess$-algebra.% such that $\pi_0A$ has no
                                % $p$-torsion.  
Then there is a natural ring isomorphism ``preserving $F$ and $V$''
between Dress and Siebeneicher's Burnside Witt ring $\Witt_{\Z_p^{\times
    n}}(\pi_0A)$ and $\pi_0 \holim[U\in\mathcal O_p] (\dT[E/U] A)^{\Z^n/U}$.
\end{cor}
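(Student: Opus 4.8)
The plan is to obtain the corollary as the special case $G=\Z_p^{\times n}$, $X=G\times_{\Z^n}\R^n$ of the inverse-limit isomorphism assembled in the paragraph immediately preceding this corollary, and then to promote the resulting bijection to a ring isomorphism compatible with $F$ and $V$. First I would verify that this substitution is legitimate: the open (finite-index) subgroups of $\Z_p^{\times n}$ are precisely the closures $\overline U$ of the subgroups $U\in\mathcal O_p$, and under $\Z_p^{\times n}/\overline U\cong\Z^n/U$ the quotient $X/\overline U$ is $E/U$ for $E=\sin\R^n$, on which $\Z^n/U$ acts freely (freeness and connectedness because $E$ is a free contractible $\Z^n$-space); for $U\subseteq V$ the transition map $E/U\to E/V$ is $(\Z^n/U)$-equivariant through $\Z^n/U\twoheadrightarrow\Z^n/V$, and this is the ``diagram of isogenies of the $n$-torus''. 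Thus the cofiltered diagram $U\mapsto(\dT[E/U]A)^{\Z^n/U}$ over $\mathcal O_p$ is exactly the diagram to which the general discussion applies, so: Proposition \ref{prop:pi0offix} gives for each $U$ a ring isomorphism $\tau^{\Z^n/U}_A\colon\Witt_{\Z^n/U}(\pi_0A)\to\pi_0(\dT[E/U]A)^{\Z^n/U}$; Lemma \ref{comparerestr} shows these commute with the restriction maps on both sides; and the Mittag--Leffler argument already given (the Burnside--Witt transition maps $R^{\Z^n/U}_{V/U}$ are surjective, and by Corollary \ref{cor:httype} each $(\dT[E/U]A)^{\Z^n/U}$ is connective with very special associated $\Gamma$-space, so the relevant $\lim^1$ vanishes and $\pi_0\holim\cong\lim\pi_0$) yields that
\[
 \Witt_{\Z_p^{\times n}}(\pi_0A)=\lim_{U\in\mathcal O_p}\Witt_{\Z^n/U}(\pi_0A)\;\xrightarrow{\ \lim_U\tau^{\Z^n/U}_A\ }\;\pi_0\holim[U\in\mathcal O_p](\dT[E/U]A)^{\Z^n/U}
\]
is a ring isomorphism, natural in $A$ by naturality of the maps $\tau^{\Z^n/U}_A$, connectivity of $A$ being what makes Corollary \ref{cor:httype} applicable.

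It remains to treat the operators $F$ and $V$. The Frobenius and Verschiebung of Dress and Siebeneicher's profinite Burnside--Witt ring are assembled level by level from the ghost components $\phi_K$ and the structure maps between the $\Witt_{\Z^n/U}$; on the homotopy side the matching operators are built from the fixed-point inclusions $F^G_H$ of Section \ref{sec:resmap} and the additive and multiplicative transfers $V^G_K,\Delta_K$ of Section \ref{subsection:teichmueller}. Here Lemma \ref{identify-transfers} identifies the composite $R^H_{\trivgroup}\circ F^G_H\circ\tau^G_A$ with $\phi_H$, Lemma \ref{comparerestr} gives the commutation of $\Delta_K$ and $V^G_K$ with the restriction maps $R^G_N$, and Lemma \ref{lem:RF=FR} gives $R^G_N=R^{G/H}_{N/H}R^G_H$ together with the compatibility of $R$ with the fixed-point inclusions. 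Applied at each finite level these identities show that, under the isomorphisms $\tau$, the topologically defined $F$ and $V$ agree with the classical ones on $\Witt_{\Z_p^{\times n}}(\pi_0A)$; since all maps involved commute with the tower's structure maps, the identification passes to the inverse limit, which is the clause ``preserving $F$ and $V$''.

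The step I expect to require the most care is the bookkeeping in the first paragraph: making precise that the $\mathcal O_p$-indexed diagram $\{E/U\}$ really is the tower of quotients of $X=G\times_{\Z^n}\R^n$ by open subgroups of $G=\Z_p^{\times n}$ (that closures of $p$-power-index subgroups of $\Z^n$ exhaust the open subgroups of $\Z_p^n$, and that $E/U$ sees only the residual finite $(\Z^n/U)$-action), that the passage $\pi_0\holim\cong\lim\pi_0$ is legitimate, and spelling out precisely what $F$ and $V$ are meant to denote on the limit object, since up to this point the paper has only introduced the relevant fixed-point inclusions and transfers at finite levels. Once those identifications are pinned down, the corollary is a formal consequence of Proposition \ref{prop:pi0offix}, Corollary \ref{cor:httype}, and Lemmas \ref{comparerestr}, \ref{identify-transfers}, \ref{lem:RF=FR}.
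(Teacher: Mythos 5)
Your proposal is correct and follows the same route as the paper: the corollary is stated as a "very special case" summarizing the paragraph that precedes it, which already assembles the finite-level isomorphisms $\tau^{G/U}_A$ (Proposition \ref{prop:pi0offix}) into an inverse-limit isomorphism via Lemma \ref{comparerestr} and the Mittag--Leffler argument, and the "preserving $F$ and $V$" clause is exactly the compatibility you extract from Lemmas \ref{identify-transfers}, \ref{comparerestr}, and \ref{lem:RF=FR}. Your write-up is just a more explicit unpacking of the bookkeeping (identifying open subgroups of $\Z_p^{\times n}$ with $\mathcal O_p$, and justifying $\pi_0\holim\cong\lim\pi_0$) that the paper leaves implicit.
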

Note that we have a cofinal subsystem given by the powers of $p$, so
that both groups in the corollary are
isomorphic to $\pi_0 \holim[k] (\dT[E/p^k\Z^n] A)^{\Z^n/p^k\Z^n}$.

\setcounter{subsection}{0}\setcounter{subsubsection}{0}
\section{Covering Homology}
\label{sec:covhom}

In this section we define covering homology through appropriate homotopy limits of the restriction
and Frobenius maps.  The name derives from the fact that in our main
examples the limit is indexed by systems of self-coverings.

\subsection{Spaces with finite actions}
\label{defineEE}
We shall use a category $\EE$ to index the homotopy limit defining covering homology.
The objects of $\EE$ consist of triples $(G,H,X)$ where $G$ is a
simplicial group, $H$ is a discrete subgroup of $G$ and $X$ is a
non-empty
$G$-space with the property that the image of $H$ in $\Aut(X)$ is finite.
A morphism in
$\EE$ from $(G,H,X)$ to $(G',H',X')$ consists of a pair
$(\varphi,f)$ where $\varphi \colon G \to G'$ is a group homomorphism
 such that $H' \subseteq \varphi (H)$
and $f \colon X \to \varphi^* X'$ is a $G$-map. The composition in $\EE$
is given by composing in each of the two entries.

Covering homomogy is built from functors into $\EE$. Before we give examples of functors into $\EE$ we
recall the definition of the twisted arrow category.
\begin{Def}
  If $\cC$ is a category, then the {\em twisted arrow category}
  $\Ar(\cC)$ of
  $\cC$ is the category whose objects are arrows in $\cC$. A
  morphism from $d\colon x\to y$ to $b\colon z\to w$ in $\Ar(\cC)$ is a commutative
  diagram 
  \begin{displaymath}
  \begin{CD}
    x @>c>> z 
    \\ 
    @V{d}VV @V{b}VV
    \\ 
    y @ < {a} < < w
  \end{CD}    
  \end{displaymath}
in $\cC$, \ie every equation $abc=d$ represents an arrow
$(a,c)$ from $d$ to
$b$, and composition is horizontal composition of squares:
$(a_1,c_1)(a_0,c_0)=(a_0a_1,c_1c_0)$.  
\end{Def} 
The shorthand notation $a^*=(a,id)$ and
$c_*=(id,c)$ is usual, giving formulae like $a^*c_*=c_*a^*$,
$(ab)^*=b^*a^*$ and $b_*c_*=(bc)_*$.

%\begin{ex}
%   Let for example $\CC$ be a subcategory of the twisted arrow category
%   $\Ar(G\uspaces)$ on the category of unbased
%   spaces with an action of a simplicial group $G$. To an object $\alpha \colon X \to Y$ of $\CC$ we can
%   assign the subgroup $G_{\alpha}$ of the zero-simplicies of $G$ consisting of
%   those $g$ satisfying that $\alpha(x g) = \alpha(x)$ for all $x$ in $X$. If
%   $G_\alpha$ is finite, then $(G,G_\alpha,X)$ is an object of
%   $\EE$. Under appropriate conditions on $\CC$ this assignment can be
%   promoted to be part of a functor from $\CC$ to $\EE$. 
{
  Given a simplicial group $G$ we let $I(G)$ be the monoid of isogenies
  of $G$, that is, 
  group-endomorphisms of $G$ with finite discrete
  kernel and cokernel. 
\begin{Def}
\label{artoe}
  Let $G$ be a simplicial group and consider $I(G)$ as a category with
  one object. Let $\Ar_G$ be the subcategory of $\Ar(I(G))$ containing
  all objects and with morphism set $\Ar_G(\delta,\beta)$ equal to
  the set of pairs $(\gamma,\alpha)$ of isogenies of $G$ with the
  property that $\Ker \beta \subseteq \gamma(\Ker \delta )$. 
  The
  functor $S_G \colon \Ar_G \to \EE$ takes an object $\alpha \colon G
  \to G$ of $\Ar_G$ to the triple
  $S_G(\alpha) = (G,\Ker(\alpha),X)$, where $X=G$ is the $G$-space where
  $G$ acts on itself by translation.
  A morphism
  \begin{displaymath}
  \begin{CD}
    G @>\gamma>> G 
    \\ 
    @V{\delta}VV @V{\beta}VV
    \\ 
    G @ < {\alpha} < < G
  \end{CD}    
  \end{displaymath}
  in $\Ar_G$ is taken to the morphism
  $$S_G(\alpha,\gamma) = (\gamma,f) \colon (G,\Ker(\delta),X) \to (G,\Ker(\beta),X)$$
  in $\EE$, where $f \colon X \to \gamma^* X$ is equal to $\gamma$
  considered as a map of $G$-spaces. 
% Here we use the surjectivity of
%   $\gamma$ to ensure that $\Ker \beta \subseteq \gamma(\Ker \delta )$.
\end{Def}
}
{

Given a set $X$ we let $EX$ denote the contractible
simplicial set defined by the formula $EX_k =
Map([k],X)$, where $[k] \in \Delta$ is considered as a set with $k+1$
elements. Note that there is an inclusion $X \cong EX_0 \subseteq EX$.
If $X = K$ is a discrete group, then $EK = EX$ is a simplicial group
containing $K$. In particular, $K$ acts freely on $EK$. Note that if
$K$ is abelian, then $EK$ is abelian. 

  Given an inclusion $H \subseteq K$ of discrete abelian groups,
% let $G$ denote
%   the model $G = EK/H$ for the classifying space of $H$.
  let $\CC(H,K)$ be the monoid of group automorphisms $a
  \colon K \to K$ with the property that $a(H)$ is a subgroup of finite index in $H$.
  There is a monoid homomorphism $\varphi \colon \CC(H,K) \to I(EK/H)$,
%  from $\CC(H,K)$ to the monoid of
%  group endomorphisms of the simplicial group $EK/H$, 
taking $a \in
  \CC(H,K)$ to the surjection 
  \begin{displaymath}
    \varphi(a) \colon EK/H \to EK/H
  \end{displaymath}
  induced by $a$.

  In the situation of Definition \ref{artoe}, let $G = EK/H$. Since the functor $\Ar(\varphi)
  \colon \Ar(\CC) \to \Ar(I(G))$ factors through the subcategory
  $\Ar_G$ we can make the following definition.

\begin{Def}
\label{S(C)}
   Let $H \subseteq K$ be an inclusion of discrete abelian groups.
   Given a submonoid $\CC$ of $\CC(H,K)$ we let
   $S(\CC) \colon \Ar(\CC) \to \EE$ be the functor taking $c \in \CC$ to the triple
  $S(\CC)(c) = (EK/H,\Ker(\varphi(c)),EK/H)$.
\end{Def}
}
\subsection{Covering homology}
To every commutative $\ess$-algebra $A$ there is an associated functor
$\dT[]A$ from the category $\EE$ of \ref{defineEE} to the category of spectra, taking an object
$(G,H,X)$ to $\left[ \dT[X]A \right]^H$. 
For a morphism $(\varphi,f) \colon (G,H,X) \to (G',H',X')$ in $\EE$ we write
$K = \Ker(\varphi) \cap H$, and by abuse of notation we write $\dT[f]$
for the map
\begin{displaymath}
  \left[ \dT[X/K]A  \right]^{H/K} \to \left[ \dT[\varphi^* X']
    A\right]^{H/K} = \left[\dT[X'] A \right]^{\varphi(H)} 
\end{displaymath}
induced by $f$.
% In order to describe
% the action of this functor on a morphism
% , let  and we also denote by
% $f$ the map $X/K
% \to X'$ induced by $f$. If we let $R$ be the composition 
% \begin{displaymath}
%   \left[ \dT[X]A \right]^H \cong \left[ \left[ \dT[X]A \right]^K
%   \right]^{H/K} \xto { R_{\trivgroup}^K } 
%   \left[ \dT[X/K]A  \right]^{H/K}   
% \end{displaymath}
% and let $F$ be the composition
% \begin{displaymath}
%   \left[ \dT[X'] A\right]^{H/K} 
%   \cong \left[\dT[X'] A \right]^{\varphi(H)} 
%   \subseteq \left[\dT[X'] A \right]^{H'},
% \end{displaymath}
We define
the map $\dT[]A(f,\varphi)$ to be the composition
\begin{displaymath}
  \left[ \dT[X]A \right]^H \xto{R_K^H} \left[ \dT[X/K]A  \right]^{H/K} 
  \xto {\dT[f]} \left[ \dT[X'] A\right]^{\varphi(H)}
  \xto {F_{H'}^{\varphi(H)}}
  \left[\dT[X'] A \right]^{H'}. 
\end{displaymath}
In order to check that $\dT[]A$ is a functor, let 
% we note that, given morphisms
\begin{displaymath}
 (\varphi,f) \colon (G,H,X) \to (G',H',X') 
\end{displaymath}
and 
$$(\psi,g) \colon (G',H',X') \to (G'',H'',X'')$$
be morphisms in $\EE$. Let $K = \Ker(\varphi) \cap H$, $K' =
\Ker(\psi) \cap H'$ and $K'' = \Ker(\psi\varphi) \cap H$.
By the naturality and transitivity
properties of the maps
$R$ and $F$ stated in Lemma \ref{lem:RF=FR} the diagram
\begin{displaymath}
\xymatrix{
{\left[\dT[X]A\right]^H}
\ar[dr]^{R}
\ar[d]_{R} 
\\
\ar[r]^R
\ar[d]_{\dT[f]}
{\left[\dT[X/K]A\right]^{H/K}}
 &
\ar[d]_{\dT[f]}
{\left[\dT[X/K'']A\right]^{H/K''}}
\ar[dr]^{\dT[gf]}
\\
\ar[r]^R
\ar[d]_F
{\left[\dT[X']A\right]^{H/K}}
 &
\ar[d]_F
\ar[r]^{\dT[g/K']}
{\left[\dT[X'/K']A\right]^{H/K''}}
&
\ar[d]_F
\ar[dr]^F
{\left[\dT[X'']A\right]^{H/K''}}
\\
\ar[r]^R
{\left[\dT[X']A\right]^{H'}}
 &
\ar[r]^{\dT[g/K']}
{\left[\dT[X'/K']A\right]^{H'/K'}}
&
{\left[\dT[X'']A\right]^{H'/K'}}
\ar[r]^F
&
{\left[\dT[X'']A\right]^{H''}}
}
\end{displaymath}
commutes. 
\begin{Def}
  Let $A$ be a commutative $\ess$-algebra and let $S \colon \A \to
  \EE$ be a functor from an arbitrary category $\A$.
  The {\em covering homology} of $A$ with respect to the functor $S$
  is 
  \begin{displaymath}
    TC_S(A) := \holim[\A] \dT[]A \circ S 
%= \holim[a \in \A] (\ddT_{X(a)}A)^{H(a)}.
  \end{displaymath}
\end{Def}
{
  Let $G$ be a simplicial group and let $I(G)$ be the 
  monoid of isogenies of $G$, that is, group-endomorphisms $\alpha$ of $G$ with
  finite and discrete kernel and cokernel. { 
  In Definition \ref{artoe} we constructed a functor $S_G \colon
  \Ar_G \to \EE$. Thus there is a covering homology
  $A \mapsto TC_{S_G}(A)$ associated to every simplicial
  group. The fact that surjective isogenies are simplicial
  versions of finite covering maps is the reason for our choice of the name 
  ``covering homology''.
 Note that
  given $\alpha$ and $\beta$ in $I(G)$, the map $\dT[\alpha]$ is the composite
\begin{displaymath}
   [\dT[G/\Ker{\alpha}] A]^{\Ker{(\beta
      \alpha)}/\Ker{\alpha}}  \to \left[ \dT[\alpha^* G] 
    A\right]^{\Ker{(\beta
      \alpha)}/\Ker{\alpha}} = \left[\dT[G] A \right]^{\Ker(\beta)}.
\end{displaymath}
If $\alpha$ is surjective, the map $G/\Ker(\alpha) \to G$ induced
by $\alpha$ is an isomorphism, and thus $\dT[\alpha]$ is an
isomorphism in this case. 

We shall occasionally use the notations $R_\gamma = \dT A(\gamma_*)$
and $F^\alpha = \dT (\alpha^*)$ for $\dT A$ applied to morphisms of
the form
\begin{displaymath}
  %(\id,\gamma) = 
\gamma_* =
  \begin{CD}
    G @>{\gamma}>> G \\
    @VV{\delta}V @VV{\beta}V \\
    G @<{=} < < G
  \end{CD}
\qquad \text{and} \qquad 
%  (\alpha,\id) = 
\alpha^* = 
  \begin{CD}
    G @>{=}>> G \\
    @VV{\delta}V @VV{\beta}V \\
    G @<{\alpha} < < G
  \end{CD}
\end{displaymath}
in $\Ar_G$.

\begin{ex}
\label{CCex}
\label{tcex}
  The situation where $G$ is the simplicial
  $n$-torus $\TZ=\sin(\R^n/\Z^n)$ is
  particularly interesting. There is an isomorphism
  \begin{displaymath}
    Hom(\Z^n,\Z^n) \to Hom(\R^n/\Z^n,\R^n/\Z^n)
  \end{displaymath}
  taking $a$ to the map induced by $\R \otimes_\Z a \colon \R^n \to
  \R^n$, and the induced homomorphism
  \begin{displaymath}
    Hom(\Z^n,\Z^n) \subseteq Hom(\TZ,\TZ), \quad a \mapsto
    \sin(\R\otimes_\Z a),
  \end{displaymath}
  is injective.
  This allows us to consider the monoid 
  $\M_n$ of injective linear
  endomorphisms of $\Z^n$ as a submonoid of $I(G)$. Thus, given a submonoid $\CC$ of $\M_n$, we
  can consider the covering homology $A \mapsto TC_{S(\CC)}(A)$. 

In
  the special case where $n=1$ and where $\CC=\M_{1} = (\mathbb
  Z \setminus \{0\},\cdot)$, the covering
  homology $TC_{S(\M_{1})}(A)$ gives us what might be called {\em
  topological dihedral homology}. If $\CC$ is the submonoid 
  $(\mathbb N_{>0},\cdot)$, giving just orientation-preserving
  coverings of the circle, the covering homology $TC_{S(\CC)}(A)$ is
  weakly equivalent to B\"okstedt,
  Hsiang and Madsen's topological cyclic homology. 
\end{ex}
}}
Let $G$ be a simplicial group, let $\CC$ be a submonoid of $I(G)$ and
let $\Ar_{\CC} := \Ar(\CC) \cap \Ar_G$. 
If $\varphi \colon G \to G$ is a group automorphism with the property that 
$\varphi x \varphi^{-1} \in \CC$ for every $x \in \CC$, then
we define the functor
\begin{displaymath}
  c_{\varphi} \colon \Ar_\CC \to \Ar_\CC, \quad x \mapsto \varphi
  x \varphi^{-1},
\end{displaymath}
and the commutative diagram
  \begin{displaymath}
    \begin{CD}
      G @>{\varphi}>> G \\
      @V{x}VV @VV{\varphi x \varphi^{-1}}V \\
      G @< { \varphi^ {-1} } < <  G,
    \end{CD}
  \end{displaymath}
specifies a natural transformation
$\eta_{\varphi}$ from the inclusion $j \colon \Ar_\CC \subseteq
  \Ar_G$ 
  to the functor $j \circ c_{\varphi}$.
Since $(\varphi,\varphi^{-1}) = \varphi^* \circ \varphi^{-1}_*$ we
have 
$$\dT A(\varphi,\varphi^{-1}) = \dT A(\varphi^*) \circ \dT A
(\varphi^{-1}_*) = R_\varphi \circ F_{\varphi^{-1}} \colon \dT[G] A
\to \dT[G] A.$$ 
% However $F_{\varphi^{-1}} = \dT A
% (\varphi^{-1}_*) \colon \dT A^{\Ker(\alpha)} \to \dT A^{\Ker(\beta)}$
% is the identity map.
Let $S_\CC$ be the composite
\begin{displaymath}
  \Ar_{\CC} \xto j \Ar_G \xto{S_G} \EE.
\end{displaymath}
\begin{Def}
\label{tcaction}
  Let $G$ be a simplicial group, let $\CC$ be a submonoid of $I(G)$
  and let $I$ be a group contained in $I(G)$ with the property that
$\varphi x \varphi^{-1} \in \CC$ for every $x \in \CC$ and every $\varphi
\in I$. We define an action of $I$ on $TC_{S_\CC}(A)$ by letting
the action of $\varphi \in I$ by given by the endomorphism 
\begin{displaymath}
\holim \Lambda A \circ S_{\CC} \xto {\holim \Lambda A \circ S_G \circ \eta_\varphi}
\holim \Lambda A \circ S_{\CC} \circ c_{\varphi} \to
\holim \Lambda A \circ S_{\CC}. 
\end{displaymath}
\end{Def}
Note that The group $I$
  acts through maps.

% \begin{prop}
% 
%   Let $G$ be a simplicial group and let $\CC$ be a submonoid of the
%   monoid $I(G)$ of isogenies of $G$. Every group $I$ of isogenies of
%   $G$ 
%   with the
%   property that 
%   $\varphi x \varphi^{-1} \in \CC$ for every $x \in \CC$ and every $\varphi
%   \in I$ acts on $TC_{S_\CC}(A)$.   
% \end{prop}
% \begin{proof}
%   In order to specify an action of  $I$ on $TC_{S_\CC}(A)$ it will be
%   helpful to consider the monoid $I(C)$ as a category with one object
%   $G$. The 
%   condition $\varphi x \varphi^{-1} \in \CC$ implies that we have a
%   commutative diagram
%   \begin{displaymath}
%     \begin{CD}
%       G @>{\varphi}>> G \\
%       @V{x}VV @VV{\varphi x \varphi^{-1}}V \\
%       G @<{\varphi^{-1}} < < G,
%     \end{CD}
%   \end{displaymath}
%   and that we may consider $(\varphi,\varphi^{-1})$ as a natural
%   transformation $\eta_{\varphi}$ from the inclusion $j \colon \Ar_\CC \subseteq
%   \Ar_G$ 
%   to the functor $j \circ \Ar(\varphi)$. {\color{blue} (Here we abuse notation and
%   write $\Ar(\varphi)$ instead of $\Ar_G \cap \Ar(\varphi)$.)} Composing
%   $j \circ \Ar(\varphi)$ with the functor $\Lambda A \circ S_G$ we obtain a natural
%   transformation $\Lambda A \circ S_G \circ \eta_\varphi$ from
%   $\Lambda A \circ S_\CC$ to $\Lambda A \circ S_\CC \circ
%   \Ar(\varphi)$. On the level of homotopy limits $\eta_\varphi$
%   induces an endomorphism of 
%   $TC_{S_\CC}(A)$. In fact we have constructed an action of the group
%   $I$
%   on $TC_{S_\CC}(A)$.
% \end{proof}
{
  \begin{ex}
    Let $G = \TZ$, let $p$ be a prime and let $\CC$ denote the submonoid of $I(G)$
    consisting of isogenies of the form
    \begin{displaymath}
      p^r \colon \TZ \to \TZ, \qquad (x_1,\dots,x_n) \mapsto p^r(x_1,\dots,x_n)=(p^rx_1,\dots,p^rx_n).
    \end{displaymath}
    for $r \ge 0$ corresponding to the submonoid of $\M_n$ consisting of endomorphisms
    of $\Z^n$ of the form $p^r \colon \Z^n \to \Z^n$. The group of
    automorphisms of $\Z^n$ is contained in $\M_n$. Since the
    endomorphisms in $\CC$ correspond to multiplication by a number,
    they are fixed under conjugation by elements in the group $GL_n
    \Z$ of group automorphisms of $\Z^n$. Thus Definition
    \ref{tcaction} specifies an action of $I$ on $TC_{S_\CC}A$. In this
    particular situation the category
    \begin{displaymath}
      \xymatrix{\dots \ar@<1ex>[r]^{p^*}  \ar@<-1ex>[r]_{p_*} &   (p^2) \ar@<1ex>[r]^{p^*}
        \ar@<-1ex>[r]_{p_*} &   (p) \ar@<1ex>[r]^{p^*}  \ar@<-1ex>[r]_{p_*} & (1) 
        },
    \end{displaymath}
   with $(p^n)$ equal to the endomorphism of $G$ given by multiplication by
   $p^n$, is cofinal in the category $\Ar_{\CC}$. Thus $TC_{S_{\CC}}$ is
   homotopy equivalent to the homotopy limit of the diagram
    \begin{displaymath}
      \xymatrix{ \dots \qquad \ar@<1ex>[r]^{F_p}  \ar@<-1ex>[r]_{R_p} &   {(\dT[\TZ] A)^{C_{p^2}^{\times n}}} \ar@<1ex>[r]^{F_p}
        \ar@<-1ex>[r]_{R_p} &   {(\dT[\TZ] A)^{C_p^{\times n}}}  \ar@<1ex>[r]^{F_p}
        \ar@<-1ex>[r]_{R_p} & {\dT[\TZ] A} 
        }.
    \end{displaymath}

If $n=1$ we may consider the action of $\{\pm1\}=GL_1(\Z)$, and the
homotopy fixed point spectrum picks up the ``part relevant to $p$'' of the
{
  topological dihedral homology} in Example \ref{tcex}. 

Working with the $p$-complete torus instead, we get operations by all of
$GL_n(\Z_p)$.  Note that if $n=1$ the map from $TC(A)\p\simeq TC_{S_\CC}(A)\p$
to $THH(A)\p\simeq \dT[(\T^1)\p](A)\p$ then sends the operation of a
$p$-adic unit on $TC(A)\p$ to the corresponding Adams operation on
$THH(A)\p$, as discussed in \ref{adams}.
  \end{ex}
}
\begin{ex}
  \label{ringincl}
  Let $R \subseteq B$ be an inclusion of (discrete) commutative
  rings and
%  $R$ into a possibly non-commutative ring $B$ and
  let $M$ be a flat $R$-module. In the context of Definition
  \ref{S(C)}, let $H =
  M = R \otimes_R M$ let $K = B
  \otimes_R M$ and let $H \subseteq K$ be the inclusion of discrete
  abelian groups induced by the $R$-module homomorphism $R \to B$. 
%We are mostly interested in the case where $M = R^n$. 
  Applying  Definition \ref{S(C)} to a submonoid $\CC$ of $\CC(M,B
  \otimes_R M)$
  we obtain a functor $S(\CC) \colon 
  \Ar(\CC) \to \EE$, and we may form the covering homology $A \mapsto TC_{S(\CC)}(A)$. 
  If $I$ is a group contained in $\CC(M,B
  \otimes_R M)$ 
  with the
  property, that 
  $\varphi x \varphi^{-1} \in \CC$ for every $x \in \CC$ and every $\varphi
  \in I$, then Definition \ref{tcaction} specifies an action of $I$ on $TC_{S(\CC)}(A)$.   

  Let us emphasize that if $R \subseteq B$ is the inclusion
  $\Z \subseteq \Q$ and if $M = \Z^n$, then $G = E(\Q \otimes_\Z
  M)/(\Z \otimes_\Z M)$ is a model for the classifying space
  $B\Z^n$. In fact, the homomorphisms
  \begin{displaymath}
    \R^n/\Z^n \gets (\R^n \times |E(\Q \otimes_\Z \Z^n)|)/\Z^n \to |E(\Q \otimes_\Z \Z^n)|/\Z^n \cong |G| 
  \end{displaymath}
  of topological abelian groups are homotopy equivalences. In Example
  \ref{tcex} we have seen that under these
  equivalences $\M_n = \CC(\Z^n,\Q^n)$ corresponds to the monoid of isogenies of the
  $n$-torus $\R^n/\Z^n$.
  The spectrum $TC_{S(\CC(\Z^n,\Q^n))}(A)$ is related to iterated topological
  cyclic homology. In fact, in Example \ref{tcex} we have seen that when
  $n=1$ and 
  $\CC$ is 
  the submonoid $(\mathbb N_{>0},\cdot)$ of $\CC(\Z,\Q) = (\Z
  \setminus \{0\},\cdot)$, then $TC_{S(\CC)}(A)$ is weakly
  equivalent to B\"okstedt,
  Hsiang and Madsen's topological cyclic homology. Note that
  Definition \ref{tcaction} gives an action of
  $I = \{-1,+1\}$ on $
  TC_{S(\CC)}(A)$ whose homotopy fixed point spectrum is the 
%could be called {\em  
{topological dihedral homology} of Example\ref{tcex} .
% (without the reference
%  to a particular prime we had in the previous example). 

  Consider the situation where $B$ is the quotient field of an integral
  domain $R$ and $M=\cO$
  is a possibly non-commutative $R$-algebra. In this situation  
  we can choose the monoid $\CC$ to be the intersection of $\CC(\cO, B
  \otimes_R \cO)$ and 
  image of the homomorphism $\psi \colon \cO \to End_\Z(B \otimes_R
  \cO)$ from $\cO$ to the
  monoid $End_\Z(B \otimes_R \cO)$ of group-endomorphisms of $B
  \otimes_R \cO$ with $\psi(x)(b \otimes y) = b \otimes yx$.
  If $f
  \colon \cO \to \cO$ is an $R$-algebra automophism,
  then the diagram
  \begin{displaymath}
    \begin{CD}
      B \otimes_R \cO @>{B \otimes_R f}>{\cong}> B \otimes_R \cO \\
      @V{\psi(x)}VV @V{\psi(f(x))}VV \\
      B \otimes_R \cO @>{B \otimes_R f}>{\cong}> B \otimes_R \cO 
    \end{CD}
  \end{displaymath}
  commutes. This implies that
  if we let $I$ be the group of $R$-algebra automorphisms of $B
  \otimes _R \cO$, of the form $\varphi = B \otimes_R f$, then
  $\varphi \psi(x) \varphi^{-1} \in \cO$ for every $\varphi \in I$ and
  every $\psi(x) \in \CC$. Thus the group of $R$-algebra automorphisms
  of $\cO$ acts on $TC_{S(\CC)}(A)$. 

  Explicit examples are listed in the figure below, where $G$ is
  a finite group, $K \subseteq L$ is a finite Galois extension of
  (local) number fields and $\cO(K) \subseteq \cO(L)$ is the induced inclusion
  of rings of integers.
  \begin{center}
  \begin{tabular}{l|l|l|l|l}
    $R$ & $B$ & $\cO$ & $\CC$ & $\Aut_R(\cO)$ \\ \hline
    $\Z$ & $\Q$ & $\Z^n$ & $(\Z \setminus \{0\})^n$ & $\Sigma_n$ \\
    $\Z\p$ & $\Q_p$ & $(\Z\p)^n$ &  $(\Z\p \setminus \{0\})^n$ &
    $\Sigma_n$ \\
    $\Z$ & $\Q$ & $\Z[G]$ & $\Z[G] \cap \Q[G]^*$ & $\Aut(G)$ \\
    $\cO(K)$ & $K$ & $\cO(L)$ & $\cO(L) \cap L^*$ & $Gal(L/K)$ \\ 
  \end{tabular}    
  \end{center}
\end{ex}

\bibliographystyle{plain}

\end{document}